\documentclass{amsart}
\usepackage{graphicx,amsmath,amsfonts,latexsym,amsthm,amssymb,mathrsfs,mathtools} %
\usepackage[colorlinks=true, citecolor=red]{hyperref}

\usepackage{comment}
\usepackage{enumitem}
\usepackage{microtype}
\usepackage{xcolor}

\usepackage{mathabx}
\usepackage{microtype}
\usepackage[parfill]{parskip}

\usepackage{geometry}
\usepackage{zref-clever} 
\zcsetup{cap = true, nameinlink = false}

\usepackage{xparse} 

\newtheorem{thm}{Theorem}[section]
\zcRefTypeSetup{thm}{Name-sg = Theorem}

\NewCommandCopy{\newtheoremcopy}{\newtheorem} 
\RenewDocumentCommand{\newtheorem}{m O{thm} m}{ 
\newtheoremcopy{#1}[#2]{#3}
\AddToHook{env/#1/begin}{\zcsetup{countertype={thm=#1}}}
\zcRefTypeSetup{#1}{Name-sg = #3}
}

\newtheorem{lem}[thm]{Lemma}
\newtheorem{prop}[thm]{Proposition}
\newtheorem{cor}[thm]{Corollary}
\newtheorem{conj}[thm]{Conjecture}
\newtheorem{quest}[thm]{Question}

\theoremstyle{definition}
\newtheorem{defn}[thm]{Definition}
\newtheorem{eg}[thm]{Example}
\newtheorem{noneg}[thm]{Non-example}
\newtheorem{rmk}[thm]{Remark}
\newtheorem{construction}[thm]{Construction}

\AtBeginEnvironment{defn}{\pushQED{\hfill$\lozenge$}}
\AtEndEnvironment{defn}{\popQED}
\AtBeginEnvironment{eg}{\pushQED{\hfill$\lozenge$}}
\AtEndEnvironment{eg}{\popQED}
\AtBeginEnvironment{noneg}{\pushQED{\hfill$\lozenge$}}
\AtEndEnvironment{noneg}{\popQED}
\AtBeginEnvironment{rmk}{\pushQED{\hfill$\lozenge$}}
\AtEndEnvironment{rmk}{\popQED}

\newcommand{\cref}[1]{\zcref{#1}}
\newcommand{\Cref}[1]{\zcref{#1}}

\renewcommand{\phi}{\varphi}

\newcommand{\cone}{\mathrm{cone}}

\newcommand{\cO}{\mathcal O}
\newcommand{\cP}{\mathcal P}

\DeclareMathOperator{\sing}{\mathrm{sing}}
\DeclareMathOperator{\intr}{\mathrm{int}}
\DeclareMathOperator{\lk}{\mathrm{lk}}

\usepackage{todonotes} %
\title{Finiteness of veering triangulations}

\author[Thomas Barthelm\'e]{Thomas Barthelm\'e}
\address{Queen's University, Kingston, Ontario}
\email{thomas.barthelme@queensu.ca}
\urladdr{sites.google.com/site/thomasbarthelme}

\author[Chi Cheuk Tsang]{Chi Cheuk Tsang}
\address{Tongji University, Shanghai 200092, China}
\email{chicheuk@tongji.edu.cn}
\urladdr{https://chicheuktsang.github.io}

\author[Jonathan Zung]{Jonathan Zung}
\address{Georgia Institute of Technology\\
Atlanta, GA\\
USA
}
\email{jzung3@gatech.edu}

\begin{document}

\begin{abstract}
We show that a finite volume cusped hyperbolic 3-manifold admits at most finitely many veering triangulations, and in fact this number is bounded above by the number of Giroux torsion free tight contact structures. This resolves Kirby problem K3 3.21e.  More generally, for any 3-manifold $M$ and any link $L$, we show finiteness for the family of pseudo-Anosov flows which admits a \emph{strictly positive Birkhoff section relative to $L$}. Combined with work of Li, this implies that a fixed closed $3$-manifold admits at most finitely many pseudo-Anosov flows without perfect fits. This resolves Kirby problem K3 3.21d.
\end{abstract}

\maketitle

\section{Introduction}

The problem of classifying (pseudo-)Anosov flows on $3$-manifolds up to orbit-equivalence, as introduced by Smale \cite{Sma67}, can be split into three main steps: Determining which $3$-manifolds admit pseudo-Anosov flow (the \emph{existence} question), determining how many distinct pseudo-Anosov flows a given $3$-manifold admits (the \emph{abundance} question), and finally giving algebraic/numerical complete invariants to classify pseudo-Anosov flows on a given manifold (the \emph{classification} question). 

In this article, we are concerned with the abundance question. There are now many known examples of $3$-manifolds admitting \emph{arbritrarily many} Anosov flows \cite{BBY17, BM22, ClaPin25, BY24, BSZ25}. Here we consider the last main open part of that question, which is the so-called \emph{Finiteness Conjecture}:
\begin{conj}
    Let $M$ be a fixed closed $3$-manifold. Then $M$ admits at most finitely many pseudo-Anosov flows up to orbit equivalence.
\end{conj}

This conjecture, now stated as Problem 2.31(b) of Kirby's K3 list \cite{K3_list}, has seen a lot of development in the past few years thanks in large part to the incorporation of techniques coming from contact geometry, together with the complete invariant of transitive pseudo-Anosov flows introduced in \cite{BBowM24,BFraM25}. More precisely, in \cite{BBowM24}, Barthelmé--Bowden--Mann proved the finiteness conjecture for the class of Anosov Reeb flows. This was later extended by Zung \cite{Zun25} and Baldwin--Sivek--Zung \cite{BSZ25} to pseudo-Anosov flows admitting positive Birkhoff sections\footnote{A result of Marty \cite{Mar25} gives that an Anosov flow is orbit equivalent to an Anosov Reeb flow if and only if it admits a positive Birkhoff section.} in rational homology spheres, and finally by Chaidez--Pan \cite{ChaPan25} to a class that they introduce and call \emph{pseudo-Anosov Reeb flows}. %

There are further works in progress regarding this conjecture of which we are aware, and which do not use contact geometry: Landry--Taylor \cite{LT26} prove that every finite-depth foliation in a hyperbolic $3$-manifold admit finitely many almost transverse pseudo-Anosov flows, while Barthelmé--Paulet \cite{BP26} prove finiteness of transitive pseudo-Anosov flows in graph-manifolds up to finite covers.

Given the correspondence, introduced by Agol and Guéritaud, between pseudo-Anosov flows and \emph{veering triangulations},  the finiteness conjecture for pseudo-Anosov flows admits a veering triangulation counterpart (Problem 3.21(e) in \cite{K3_list}), which we prove here:
\begin{thm}\label{thmintro_veering}
    Let $M^\circ$ be an orientable compact 3-manifold with torus boundary components. Then there are at most finitely many veering triangulations on $M^\circ$ up to isotopy.
\end{thm}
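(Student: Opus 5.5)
The plan is to go through the Agol--Guéritaud correspondence and reduce \cref{thmintro_veering} to a finiteness statement for pseudo-Anosov flows with a positivity property. A veering triangulation on $M^\circ$ determines, and is determined up to isotopy by, a pseudo-Anosov flow $\phi$ without perfect fits on a Dehn filling $M = M^\circ(s)$. The filling slopes $s$ are the ``ladderpole'' slopes on the boundary tori. The singular orbits, together with the cores of the filling solid tori, form a link $L$ with $M \smallsetminus L \cong \intr M^\circ$.

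The first step is to show that only finitely many filling slope tuples $s$ arise. The triangulation is combinatorially described by ladders on each cusp. I would bound the number of ladderpoles via Euler characteristic and angle-structure considerations: the number of prongs is governed by the slope's intersection with the degeneracy slope. The stronger bound in the abstract (by the number of Giroux torsion free tight contact structures) suggests a cleaner route, so I would also keep that in mind. For each veering triangulation, the flow $\phi$ on $M$ should admit a \emph{strictly positive Birkhoff section relative to $L$}. Heuristically, the ``no perfect fits relative to $L$'' condition lets one build, out of the triangulation's 2-skeleton (the stable/unstable branched surfaces), a Birkhoff section whose boundary lies on $L$ and which is positively transverse everywhere.

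The second step is to pass to contact geometry, following the strategy of \cite{BBowM24, Zun25, BSZ25, ChaPan25}. A strictly positive Birkhoff section relative to $L$ lets one perturb $\phi$ on $M \smallsetminus L$ to a Reeb-like flow. Concretely, the contact structure obtained is supported by the section viewed as an open book with binding $L$, adjusted near $L$. Flows with isotopic associated contact structures are then shown to be orbit equivalent, using the complete invariant of \cite{BBowM24, BFraM25}: the flow is recovered from the contact structure by a Reeb dynamics / contact homology argument in the complement of $L$.

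The last step is finiteness of the contact structures themselves. The resulting contact structures on $M^\circ$ (relative to the boundary) are tight and have no Giroux torsion. I would bound them by Colin--Giroux--Honda finiteness of tight Giroux-torsion-free contact structures on atoroidal/hyperbolic manifolds, applied in the relative setting with fixed boundary characteristic foliation determined by the finitely many slopes from the first step.

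I expect the main obstacle to be the second step: showing that the map from veering triangulations to contact structures is finite-to-one. This requires the Reeb-flow rigidity of \cite{BBowM24} to work relative to $L$ with strictly positive rather than positive Birkhoff sections. I would attempt it by doubling along the cusps, or by adapting Chaidez--Pan's pseudo-Anosov Reeb flow framework.
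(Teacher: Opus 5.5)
Your overall architecture (triangulation $\to$ flow on a Dehn filling $\to$ relative strict positivity $\to$ Reeb flow on the drilled manifold $\to$ Colin--Giroux--Honda plus spectral rigidity) is the right one. But the reduction from triangulations to flows, which is the actual content of this theorem, has a genuine gap.

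\textbf{The filling slopes.} They are not the ladderpole slopes. In the Agol--Gu\'eritaud/Schleimer--Segerman correspondence, the ladderpole curve $l$ of a cusp is the \emph{degeneracy curve} of the filled orbit, and that orbit has $i(s,l)$ prongs. Only slopes with $i(s,l)\ge 2$ are admissible, so filling along $l$ is not allowed. Moreover, $\mathsf{P}(M^\circ,\Delta,s)$ only has no perfect fits \emph{relative to the filled orbits}. It has no perfect fits outright once every filled orbit is singular, i.e.\ $i(s,l)\ge 3$ on every cusp.

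Your first step (finitely many slope tuples via Euler characteristic and angle structures) also cannot work. Euler characteristic only gives $E=T$ and does not bound the number of tetrahedra. It is also unnecessary.

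The missing idea is that $s$ is a free choice. Given infinitely many non-isotopic $\Delta_n$, an elementary argument on multicurves in a torus (Lemma~\ref{lem:singularfillingslopeexists}, testing against $\pm3\lambda\pm\mu$ and $\pm\lambda\pm3\mu$) gives one slope tuple $s$ and a subsequence with $i(s,l_n)\ge 3$ on every cusp. This does three things:
\begin{itemize}
\item it fixes a single closed manifold $M^\circ(s)$ and a single link $L$ of cores, whose isotopy class depends only on $s$;
\item it forces $L=\sing(\phi_n)$, so every $\phi_n$ has no perfect fits;
\item by injectivity of the correspondence, it makes the $\phi_n$ pairwise non-isotopically equivalent.
\end{itemize}

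\textbf{The positivity input.} This is also misdescribed. A Birkhoff section with boundary only on $L$ would be a cross-section of the flow on $M\setminus L$. That is the layered case, which fails for general veering triangulations.

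What is actually used is Tsang's theorem: a flow without perfect fits that is not an Anosov suspension has a Birkhoff section bounded by $\sing(\phi)$ plus one extra nonsingular orbit, which can be prescribed positive. Strictness, which you never address, holds for the following reason. An almost transverse surface disjoint from or tangent to $\sing(\phi)$ inherits a nonsingular line field from $\mathcal{F}^s$, so it is a torus. But such flows only live on atoroidal manifolds.

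With these ingredients, the theorem follows by applying Theorem~\ref{thm_main} to the fixed pair $(M^\circ(s),L)$.

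\textbf{The contact-geometric steps.} These are a sketch of Theorem~\ref{thm_main}, not of this result, and the open-book picture is inaccurate: the section has positive boundary orbits off $L$. The paper instead proceeds as follows:
\begin{itemize}
\item it builds Zung's stable Hamiltonian structure $(\omega,\lambda)$ with $\lambda\wedge d\lambda\ge 0$;
\item it uses strictness, through the homology-cone lemma, to arrange $[\omega]=0$ on $M^\circ$;
\item it takes $\lambda+\epsilon\beta$ with $d\beta=\omega$ as the contact form.
\end{itemize}
Ruling out Giroux torsion then requires a separate argument.
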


The above theorem is in fact a consequence of our main result, which expands on the strategy of the works \cite{BBowM24,Zun25,BSZ25,ChaPan25}, by proving finiteness of pseudo-Anosov flows that admit a \emph{strictly positive Birkhoff section relative to a link.} 
Precisely, given a link $L$ in a manifold $M$, we say that a pseudo-Anosov flow $\phi$ admits a \emph{positive Birkhoff section relative to $L$}, if $\phi$ admits a Birkhoff section $S$ such that all the \emph{negative} boundary components of $S$ are in $L$. We say that it is \emph{strictly} positive if additionally, any surface that is almost transverse to $\phi$ must intersect $L$ transversely. (See Section \ref{subsec_birkhoff} for details.)

With this terminology, the key result proven here is
\begin{thm}\label{thm_main}
    Let $M$ be an oriented closed 3-manifold and let $L$ be a link in $M$.
Then there are only finitely many distinct pseudo-Anosov flows on $M$ that are strictly positive relative to $L$, up to orbit equivalence.
\end{thm}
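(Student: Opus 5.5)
The plan follows the contact-geometric strategy of \cite{BBowM24,Zun25,BSZ25,ChaPan25}, adapted to the relative setting. Given a pseudo-Anosov flow $\phi$ on $M$ that is strictly positive relative to $L$, first use the positive Birkhoff section $S$ relative to $L$ to build a contact form $\alpha$ on the complement $M \setminus \nu(L)$ of a tubular neighborhood of $L$. Its Reeb flow is orbit equivalent to $\phi$ away from $L$, and the idea is that $\alpha$ can be written as $dt + \epsilon\beta$ on the mapping torus of the first return map, with a local model near each boundary component of $S$. The negative boundary components are exactly the obstruction to extending over the binding, which is why they are confined to $L$. Near $L$, interpolate with a standard model on solid tori so that $\xi = \ker\alpha$ becomes a contact structure on all of $M$ in which $L$ is, say, Legendrian or transverse with controlled local behavior. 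The first step is therefore a relative version of the Reeb-flow construction, giving a map $\phi \mapsto \xi_\phi$ that is well defined up to isotopy.

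Second, I would show that $\xi_\phi$ is tight and has no Giroux torsion. Here the strictness hypothesis enters: any surface almost transverse to $\phi$ meets $L$ transversely. An overtwisted disk or a Giroux torsion domain would produce, via the Reeb dynamics and the pseudo-Anosov structure (no contractible orbits, hyperbolicity), an incompressible or almost transverse torus or disk avoiding $L$, which contradicts strictness. In practice I would follow \cite{BSZ25} and pass to the universal cover, or use the fact that Reeb flows with no contractible periodic orbits are tight, keeping track of the orbits of $L$ separately.

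Third, I would prove injectivity up to finite ambiguity: $\xi_\phi$ together with $L$ determines $\phi$. Following \cite{BBowM24}, the free homotopy classes of Reeb orbits of $\xi_\phi$ in $M\setminus L$ are governed by contact homology, which is an isotopy invariant of the contact structure relative to $L$. By the complete invariant of \cite{BBowM24,BFraM25}, the set of free homotopy classes of periodic orbits (with their $L$-linking data) determines a transitive pseudo-Anosov flow. Strict positivity should also force transitivity, since a Birkhoff section exists. Combining this with the theorem of Colin--Giroux--Honda that an atoroidal, or more generally fixed, manifold carries finitely many tight contact structures without Giroux torsion up to isotopy, and with Euler class bounds when $M$ is toroidal, gives finitely many flows.

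The main obstacle is the third step, namely computing the relative contact homology in the presence of the negative boundary orbits lying in $L$. The degeneracy near $L$ may produce extra orbits or spurious differentials. I would first try to use the strictness hypothesis to filter orbits by intersection with $L$ and argue that the filtered chain complex has trivial differential, by an action or linking argument analogous to \cite{Zun25}.
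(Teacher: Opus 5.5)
Your overall architecture is the right one: a Reeb flow from a relatively positive Birkhoff section, contact homology to read off free homotopy classes, Colin--Giroux--Honda finiteness, and spectrum rigidity. But there is a genuine gap at the foundation, namely your decision to fill $\nu(L)$ back in and work with a contact structure on all of $M$.

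\emph{The filling step does not work with control.}
\begin{itemize}
\item In blow-up coordinates $(r,\theta,z)$ around a component of $L$, with $\theta$ meridional, the mapping-torus form is $dt=q\,d\theta+p\,dz$. A negative boundary component means $q<0$.
\item Take a $T^2$-invariant extension $F(r)\,d\theta+G(r)\,dz$ that is smooth at the core. Then $F\approx cr^2$ and $G(0)=G_0$ with $cG_0>0$, and the contact condition forces $(F,G)$ to rotate monotonically.
\item So either $F$ vanishes at some $r>0$, in which case the meridian is Legendrian and bounds an overtwisted disk. Or the core is traversed backwards, and the Reeb field becomes purely meridional on some torus before matching the outside, giving Reeb orbits contractible in $M$.
\item This is exactly why Zung's local model needs $p,q>0$. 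Any other filling gives up control of both the Reeb dynamics in $\nu(L)$ and tightness.
\end{itemize}
Even granting a filling, your third step would need finiteness of pairs $(\xi,L)$ with $L$ Legendrian or transverse. Colin--Giroux--Honda does not give this: stabilizations produce infinitely many such pairs.

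The paper never fills. It proceeds as follows:
\begin{itemize}
\item It keeps $M^\circ=M\setminus\nu(L)$ as a sutured contact manifold.
\item After passing to an infinite subfamily, it chooses a suture $\Gamma$ whose slope avoids all the degeneracy slopes.
\item It glues a Hamiltonian model near $\partial M^\circ$, so the form is adapted to $(M^\circ,\Gamma)$ with a boundary germ depending only on $\Gamma$.
\item It then applies the sutured versions of cylindrical contact homology and of Colin--Giroux--Honda.
\item The full spectrum is recovered from $L$ together with the primitive non-peripheral spectrum $\mathcal{P}^\circ$.
\end{itemize}

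\emph{Strictness is used in the wrong place.} Your $dt+\epsilon\beta$ has the (blown-up) suspension flow as Reeb flow only if $d\beta$ is the suspended invariant area form $\omega$. That requires $[\omega]=0$ in $H^2(M^\circ;\mathbb{R})$, which is a real cohomological obstruction, and this is precisely what strictness buys.
\begin{itemize}
\item By Mosher, if $\cone_1(L)$ missed the relative interior of $\cone_1(\phi)$, there would be an almost transverse surface with zero intersection with $L$.
\item Hence $0$ lies in the relative interior of $\cone_1(\phi^\circ)\subset H_1(M^\circ,\partial M^\circ;\mathbb{R})$.
\item The free choices in the solid blow-ups then move $[\omega]$ to $0$.
\end{itemize}

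\emph{Tightness and Giroux torsion are handled differently.} Tightness comes from hypertightness: every Reeb orbit is homotopic to an orbit of $\phi$ or to a meridian of $L$. Absence of Giroux torsion comes from Chaidez--Pan: every class of the boundary torus of a torsion domain is, up to inverse, realized by a Reeb orbit. This is contradicted in one of two ways:
\begin{itemize}
\item pseudo-Anosov orbit classes cannot fill a $\mathbb{Z}^2$; or,
\item for tori parallel to $\partial M^\circ$, the peripheral Reeb orbits are kept in a proper cone.
\end{itemize}
Your claim that torsion would produce an almost transverse torus missing $L$ has no mechanism behind it.

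\emph{The obstacle you single out needs no vanishing of the differential.} First one does solid blow-ups at $\sing(\phi)$ and $\partial S$. These are needed anyway, since near singular orbits $\phi$ is not smooth and not orbit equivalent to a Reeb flow. Then a nondegenerate perturbation is made. After this, the Lefschetz index sum in each primitive non-peripheral class is nonzero, so $\chi(HC)\neq 0$, and $HC\neq 0$ exactly on $\mathcal{P}^\circ$. Invariance of $HC$ under the contactomorphisms isotopic to the identity supplied by Colin--Giroux--Honda transfers $\mathcal{P}^\circ$ between flows. Spectrum rigidity then finishes the argument, up to the $2^s$ ambiguity from scalloped Seifert pieces.
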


The finiteness of veering triangulations follows from Theorem \ref{thm_main}, thanks to a result of Tsang \cite{Tsa24a}, implying that the pseudo-Anosov flows constructed from veering triangulation will have strictly positive Birkhoff section relative to the link corresponding to $\partial M^\circ$.

Furthermore, a result of Li \cite{Li26} allows us to deduce another finiteness result from Theorem \ref{thm_main}:
In \cite{Li26}, Li proved that, given an atoroidal manifold $M$, there are only finitely many distinct isotopy classes of possible singular orbits of pseudo-Anosov flows. Thus we get:

\begin{cor}
    Let $M$ be a closed oriented atoroidal manifold. There are only finitely many distinct pseudo-Anosov flows with a positive Birkhoff section relative to its singularities on $M$, up to isotopy equivalence\footnote{We say that two flows are \emph{isotopically equivalent} if they are orbit equivalent via an homeomorphism that is isotopic to identity.}.    
\end{cor}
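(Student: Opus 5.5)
The corollary will follow from \Cref{thm_main} combined with Li's finiteness of singular orbits \cite{Li26}. The main difficulty is reconciling the two hypotheses, "positive relative to its singularities" and "strictly positive relative to a fixed link $L$", and upgrading orbit equivalence to isotopy equivalence.

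First I partition the set of flows. Let $\phi$ be a pseudo-Anosov flow on $M$ with a positive Birkhoff section relative to its singular set $\sing(\phi)$, viewed as a link. Since $M$ is atoroidal, \cite{Li26} says the isotopy class of $\sing(\phi)$ ranges over a finite list $L_1,\dots,L_k$ of links. After conjugating $\phi$ by a homeomorphism isotopic to the identity, I may assume $\sing(\phi)=L_i$ exactly. This conjugation does not change the isotopy equivalence class. So it suffices to show that for each fixed $L=L_i$, only finitely many flows with singular set $L$ admitting a positive Birkhoff section relative to $L$ exist, up to isotopy equivalence.

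Second, I upgrade positivity to strict positivity. Let $\Sigma$ be a surface almost transverse to $\phi$. I need $\Sigma$ to meet $L$ transversely, i.e. to be transverse to the singular orbits rather than tangent to or containing them. I would try to argue as follows: in the almost transverse setting, the only orbits along which $\Sigma$ may fail to be transverse are the blown-up orbits. I expect the relevant definition in Section~\ref{subsec_birkhoff} either to exclude this for singular orbits or to let us reduce to the case where $\Sigma$ is isotoped to be transverse to the blown-down flow. An alternative would use atoroidality to rule out a surface disjoint from $L$: such a surface would be a transverse surface in the complement of the singular orbits, and hence a torus or Klein bottle carried by $\phi$. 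This is the step I expect to be the main obstacle. If it fails, I would need the paper's precise definition of "strictly positive", possibly with an added hypothesis such as no perfect fits.

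Third, with strict positivity in hand, \Cref{thm_main} gives finitely many orbit equivalence classes for each $L_i$. To pass to isotopy equivalence, note that for atoroidal $M$, which is hyperbolic once it admits a pseudo-Anosov flow, the mapping class group $\mathrm{Mod}(M)$ is finite by Mostow rigidity. An orbit equivalence class therefore splits into at most $|\mathrm{Mod}(M)|$ isotopy equivalence classes. Summing over the finitely many $L_i$ gives the corollary.
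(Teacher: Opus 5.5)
Your plan matches the paper's proof of \Cref{thm:finitenessBAS}: use Li's theorem to fix $\sing(\phi)=L$, upgrade to strict positivity relative to $L$, then apply the main theorem. \textbf{The gap is Step~2.} It is the only step with real content, and you leave it unresolved.

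Your first route cannot work. The definition in Section~\ref{subsec_birkhoff} explicitly allows an almost transverse surface to be non-transverse along singular orbits, and even to contain them. Strict positivity relative to $L$ is exactly the extra requirement that no almost transverse surface avoids crossing $L$ transversely. So it must be proved, not read off from the definitions.

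Your ``alternative'' is the correct idea and is the paper's argument. As written, though, it only treats surfaces disjoint from $L$ and gives no reason for the conclusion. It has to cover every almost transverse $\Sigma$ that fails to cross $L$ transversely, including surfaces containing components of $L$. The argument runs as follows:
\begin{itemize}
\item Since $L=\sing(\phi)$, such a $\Sigma$ crosses no singular orbit transversally.
\item Prong singularities of the foliation cut out on $\Sigma$ by $\mathcal{F}^s$ arise only at transverse crossings of $k$-prong orbits, with index $1-k/2<0$. Hence this foliation is nonsingular, and every component of $\Sigma$ has Euler characteristic $0$.
\item Each component is cooriented in the oriented $M$, hence orientable, hence a torus.
\item A torus (almost) transverse to a pseudo-Anosov flow is $\pi_1$-injective, which contradicts atoroidality.
\end{itemize}
This is the content of the paper's sentence ``any almost transverse surface must be a torus, which contradicts the atoroidal assumption.'' No extra hypothesis such as no perfect fits is needed.

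Your Step~3 takes a different but valid route. You pass from orbit equivalence to isotopy equivalence using finiteness of $\mathrm{Mod}(M)$, via Mostow rigidity and Gabai--Meyerhoff--Thurston. This lets you use the main theorem as a black box. One caveat: ``hyperbolic'' requires reading atoroidal as ``no $\mathbb{Z}^2\le\pi_1 M$.'' Unit tangent bundles of hyperbolic triangle orbifolds have no incompressible tori, yet carry Anosov flows.

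The paper needs no input about $\mathrm{Mod}(M)$. The proof of \Cref{thm:relposfinite} already gives isotopy equivalence: its diffeomorphisms $h_i$ are isotopic to the identity rel $\partial M^\circ$ and extend over $\nu(L)$. So once $L$ is fixed as a link, that proof bounds isotopy equivalence classes directly.
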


Finally, using the construction of Birkhoff sections in \cite{Tsa24a} once again, we deduce 
\begin{cor}\label{cor_finiteness_no_perfect_fits}
    Let $M$ be a closed manifold. There are only finitely many pseudo-Anosov flows \emph{without perfect fits} on $M$.
\end{cor}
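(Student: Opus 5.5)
The plan is to reduce Corollary \ref{cor_finiteness_no_perfect_fits} to Theorem \ref{thm_main}. The reduction has two inputs: a finiteness statement for the possible links $L$, and the construction of Birkhoff sections from \cite{Tsa24a}. A pseudo-Anosov flow without perfect fits is transitive and admits a veering triangulation on the complement of a suitable finite collection of closed orbits. Here I would use the Agol--Guéritaud/Landry--Minsky--Taylor correspondence, drilling the singular orbits, or if there are none, a chosen orbit set. Tsang's construction in \cite{Tsa24a} then gives a Birkhoff section whose negative boundary components all lie on the drilled orbits. It also gives strict positivity: every surface almost transverse to the flow must meet the drilled link transversely. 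So each such flow $\phi$ is strictly positive relative to the link $L_\phi$ formed by the drilled orbits.

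Theorem \ref{thm_main} is stated for a fixed link $L$. The second step is therefore to show that, up to homeomorphism of $M$, only finitely many links $L_\phi$ arise. If $M$ is atoroidal, the result of Li \cite{Li26} quoted above gives finitely many isotopy classes of singular orbit sets. For flows without perfect fits I would also need to bound the number of orbits that are drilled. I would try to choose $L_\phi$ canonically, for instance as the singular orbits together with a bounded number of extra orbits, with the bound controlled by an Euler characteristic or complexity argument on the Birkhoff section. Applying Theorem \ref{thm_main} to each of the finitely many links and taking the union then finishes the atoroidal case.

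The toroidal case needs a separate argument. There I would use that flows without perfect fits on toroidal manifolds are very restricted: a flow with no perfect fits has no lozenges, hence no freely homotopic periodic orbits. Since an essential torus generally forces lozenges, I expect to show that such an $M$ carries no flows without perfect fits, or only a finite, explicitly describable family, such as suspensions on Sol manifolds.

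The main obstacle I expect is the second step: bounding the possible links $L$ uniformly when $\phi$ has few or no singular orbits. Li's theorem only controls singular orbits, so additional orbits would have to be selected in a way independent of $\phi$ up to finite ambiguity. My first attempt would be to show, using \cite{Tsa24a}, that drilling only the singular orbits already suffices for flows without perfect fits, perhaps after passing to a finite-index bound on the number of prongs. The remaining case of Anosov flows without perfect fits, which are suspensions, would be handled separately using the classical finiteness of suspension Anosov flows on a fixed manifold.
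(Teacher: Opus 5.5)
Your architecture matches the paper's: Tsang's Birkhoff section, Li's finiteness of singular links, then \Cref{thm_main}, with Anosov suspensions and toroidal $M$ split off. But the step you flag as the main obstacle is left unresolved, and the route you propose for it would not work.

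If $L_\phi$ has to contain nonsingular orbits beyond $\sing(\phi)$, nothing controls their isotopy classes. \cite{Li26} only concerns singular orbits. Bounding the \emph{number} of extra orbits by an Euler characteristic argument gives no finiteness, since boundedly many knots can still realize infinitely many isotopy classes.

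The missing observation is that relative positivity (\Cref{defn:relposflow}) only requires the \emph{negative} boundary components of the Birkhoff section to lie on $L$. The section of \cite[Theorem 6.3]{Tsa24a} has boundary $\sing(\phi)$ plus a single nonsingular orbit $\gamma$. Inspecting the construction, $\gamma$ can be prescribed to be a positive boundary component. Hence $L=\sing(\phi)$ already works, including $L=\emptyset$ when there are no singular orbits and $\phi$ is not a suspension. There is nothing further to bound, and the veering triangulation detour and the ``chosen orbit set'' are unnecessary.

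Second, strict positivity does not come from Tsang's construction. A Birkhoff section alone does not exclude almost transverse surfaces avoiding $L$; an Anosov suspension, for instance, has a transverse fiber. The needed argument is this: an almost transverse surface disjoint from or tangent to $\sing(\phi)$ inherits a nonsingular line field from $\mathcal{F}^s$, so it is a torus. That is ruled out because pseudo-Anosov flows without perfect fits that are not suspensions of Anosov diffeomorphisms only exist on atoroidal manifolds \cite[Chapter 5]{BM25}. This cited fact is exactly what your toroidal paragraph is trying to re-derive. Together with \cite[Theorem 5.7]{BF13} for suspensions, it disposes of the toroidal case, so you need not prove it.

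Finally, \Cref{thm_main} requires $M$ oriented, while the corollary is stated for arbitrary closed $M$. You should pass to the orientation double cover, where the lifted flows have the same orbit space and hence still no perfect fits. Then descend via \Cref{prop_finite_cover}.
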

This corollary answers Problem 3.21(d) of \cite{K3_list}.

Theorem \ref{thm_main} formally generalizes the finiteness results obtained in \cite{Zun25} and \cite{BSZ25}, and by Asaoka--Bonatti--Marty \cite{ABM24}, also the finiteness of Anosov Reeb flows obtained in \cite{BBowM24}. Indeed, all these previous results can be seen as cases of Theorem \ref{thm_main} where the link $L$ is empty.

The strategy to prove Theorem \ref{thm_main} is similar to those of \cite{BBowM24,Zun25,BSZ25,ChaPan25}: One wants to translate the problem of distinguishing pseudo-Anosov flows up to isotopy equivalence into a problem of distinguishing contact structures of Reeb flows up to isotopy. Morally, the key is to show that one can replace a pseudo-Anosov flow by a Reeb flow with the same \emph{spectrum}, or \emph{free homotopy data}. The spectrum is a complete invariant of (most) transitive pseudo-Anosov flows by \cite{BFraM25}, and on the Reeb side, cylindrical contact homology gives that it is an invariant of contact structures up to isotopy. Finiteness then follows from Colin--Giroux--Honda's finiteness theorem for tight contact structures \cite{CGH09}.

This strategy works directly in the case of Anosov Reeb flows, but since a typical pseudo-Anosov flow is not the Reeb flow of a contact structure, going from pseudo-Anosov to Reeb dynamics becomes one of the main sticking points in the works extending \cite{BBowM24}. The new observation in our work is that, after drilling out the periodic orbits on the link $L$, it is possible to realize a pseudo-Anosov flow that is strictly positive relative to $L$ as (a modification of) the Reeb flow of a tight contact structure on the drilled manifold.

\begin{rmk}[Relation to upcoming works of Gabai--Li and Bowden--Colin] \label{rmk:gabaili}
The results of this paper were announced in January 2026,
with a preprint being circulated at the time. In the Summer of 2026, Dave Gabai and Lily Li announced that they are able to prove the Finiteness Conjecture on closed \emph{atoroidal} 3-manifolds. Concurrently, Jonathan Bowden and Vincent Colin obtained another proof of the Finiteness Conjecture for Anosov, and some pseudo-Anosov, flows on atoroidal 3-manifolds in a work in preparation, a preprint of which started being circulated in August 2026.

These upcoming results both generalize Corollary \ref{cor_finiteness_no_perfect_fits}, and recover Theorem \ref{thmintro_veering}.

From our understanding, Gabai and Li use completely different techniques than ours: 
They put the stable and unstable foliations of flows in Kneser normal form with respect to a fixed triangulation, then use a compactness argument to reach a contradiction.
Meanwhile, we use the classification of flows from their spectrum, as well as cylindrical contact homology. 
Our methods also give a somewhat explicit upper bound in terms of contactomorphism classes of tight contact structures, at least for the results that do not rely on Li's finiteness theorem for singular orbits.

 The proof of Bowden and Colin on the other hand does rely on contact geometry and the free homotopy data, but not on cylindrical contact homology, as they manage to show that the \emph{bicontact} structures associated to Anosov flows allow one to recover the free homotopy data in the atoroidal case. In particular, their bound is also somewhat explicit, and depends on the number of (isotopy classes of) contact structures on $M$ and the number of elements in the mapping class group of $M$.
\end{rmk}

\subsection*{Acknowledgement}
The authors thank Julian Chaidez, Yijie Pan and Lily Li.
This project originated over discussions at the `Low dimensional topology and Floer theory' workshop at Centre de recherches mathématiques in Summer 2025. 
We would like to thank the organizers for creating a stimulating research environment.
TB was partially supported by NSERC Discovery (RGPIN-2024-04412) and Alliance International programs (ALLRP 598447 - 24).
CCT was supported by a CRM postdoctoral fellowship at CIRGET. JZ was supported by a postdoctoral fellowship under Simons Foundation Award \#994330, \textit{Simons Collaboration on New Structures in Low-Dimensional Topology}.

\section{Pseudo-Anosov flow}

\subsection{Preliminaries}

Throughout this paper, unless otherwise stated, $M$ will denote an oriented closed 3-manifold.

A \textbf{pseudo-Anosov flow} on $M$ is a continuous flow $\phi:M \times \mathbb{R} \to M$ for which there exists a pair of transverse singular 2-dimensional foliations $(\mathcal{F}^s,\mathcal{F}^u)$ such that
\begin{itemize}
    \item $\mathcal{F}^s$-leaves and $\mathcal{F}^u$-leaves intersect in flow lines,
    \item flow lines along each $\mathcal{F}^s$-leaf converge in forward time and diverge in backward time, and
    \item flow lines along each $\mathcal{F}^u$-leaf diverge in forward time and converge in backward time.
\end{itemize}
For more details, we refer the reader to \cite{BM25}.

The singularity locus of $\mathcal{F}^s$ and $\mathcal{F}^u$ must coincide and equal to a collection of closed orbits.
We call these closed orbits the \textbf{singular orbits} of $\phi$, and denote it by $\sing(\phi)$.
See \Cref{fig:paflow} for an illustration of the dynamics of $\phi$ near a nonsingular and a singular orbit.

\begin{figure}
    \centering
    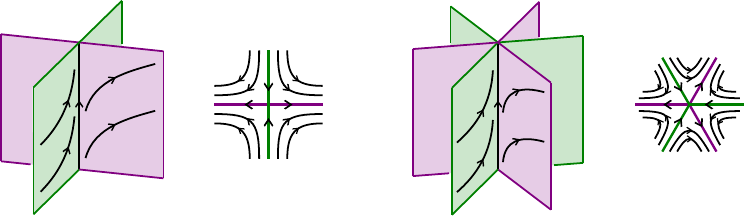
    \caption{Local pictures of a pseudo-Anosov flow near a nonsingular orbit (left) and singular orbit (right).}
    \label{fig:paflow}
\end{figure}

Two flows $\phi_1$ and $\phi_2$ on $M$ are \textbf{orbit equivalent} if there exists a homeomorphism $h\colon M \to M$ sending the flow lines of $\phi_1$ to the flow lines of $\phi_2$ in a way that preserves orientation but not necessarily preserving parametrization.
Furthermore, $\phi_1$ and $\phi_2$ are \textbf{isotopically equivalent} if $h$ can be chosen to be isotopic to identity.

The dynamics of a pseudo-Anosov flow is very rigid; in most cases, it is completely determined by its closed orbits.
To make this precise, we state the following definitions.

\begin{defn}
Let $\phi$ be a flow on an oriented closed 3-manifold $M$. The \textbf{spectrum} of $\phi$ is the set
$$\mathcal{P}(\phi) = \{g \mid \text{ $g$ is the free homotopy class of a closed orbit $\gamma$ of $\phi$}\}.$$
\end{defn}

\begin{defn}
Let $\phi$ be a flow on an oriented closed 3-manifold $M$. We say that $\phi$ is \textbf{transitive} if it has a dense orbit.
It can be shown that a pseudo-Anosov flow is non-transitive if and only if it has a separating transverse torus (\cite{Mos92a,BBonM24}).
\end{defn}

\begin{defn}
Let $\phi$ be a pseudo-Anosov flow on an oriented closed 3-manifold $M$, and let $P$ be a Seifert piece of the JSJ decomposition of $M$. %
We say that $P$ is a \textbf{scalloped periodic Seifert piece} for $\phi$ if $P$ is a Seifert piece such that all the boundary surfaces in $\partial P$ are transverse to $\phi$ and such that, for each $S\subset \partial P$, two linearly independent elements of $\pi_1(S)$ are represented by periodic orbits of $\phi$.
\end{defn}

\begin{thm}[{\cite{BFraM25}}] \label{thm:specdeterminespaflow}

Let $\phi$ be a transitive pseudo-Anosov flow with $s$ scalloped periodic Seifert pieces. Then $\mathcal{P}(\phi)$ determines the isotopy equivalence class of $\phi$ up to $2^s$ possibilities. 
\end{thm}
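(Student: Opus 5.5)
This theorem is quoted from \cite{BFraM25}, and I would follow the strategy of that paper rather than prove it from scratch. The plan is to pass to the universal cover. Let $\widetilde\phi$ be the lift of $\phi$ to $\widetilde M$ and let $\cO_\phi=\widetilde M/\widetilde\phi$ be the orbit space. This is a plane carrying the pair of transverse singular foliations induced by $\mathcal{F}^s$ and $\mathcal{F}^u$, together with an action of $\pi_1(M)$. By the standard theory (Barbot, Fenley), the isotopy equivalence class of $\phi$ is determined by the $\pi_1(M)$-equivariant conjugacy class of this bifoliated plane. So it suffices to show two things: the spectrum $\cP(\phi)$ determines the equivariant bifoliated plane up to $2^s$ choices, and a conjugacy of bifoliated planes yields an isotopy equivalence. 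The second part is the known reconstruction result, so I would concentrate on the first.

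\textbf{Step 1 (identifying fixed points).} First I would show that $\cP(\phi)$ tells us, for each $g\in\pi_1(M)$, whether $g$ fixes a point of $\cO_\phi$. A fixed point corresponds to a closed orbit freely homotopic to (a power of) $g$. I would also track the fixed points with their orientation data, i.e.\ whether $g$ acts on the fixed point as a positive or negative multiple of the period.

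\textbf{Step 2 (recovering the bifoliation).} Next I would use transitivity to recover the bifoliated plane from the action. Periodic points are dense in $\cO_\phi$. Using Fenley's theory of lozenges and chains of lozenges, two fixed points of elements $g$ and $h$ are connected by a lozenge chain exactly when $g$ and $h$ share a fixed point or satisfy specific algebraic relations, namely that they commute or generate $\mathbb{Z}^2$ subgroups. So the combinatorics of the ``leaf space'' can be read off from $\pi_1(M)$ together with $\cP(\phi)$. The goal is then to show that two transitive flows $\phi_1$ and $\phi_2$ with $\cP(\phi_1)=\cP(\phi_2)$ have equivariantly homeomorphic orbit spaces. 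The key input is the ordering of periodic points along leaves, which I would hope to recover from the circle at infinity of $\cO_\phi$ (Fenley's ideal boundary). The action of $\pi_1(M)$ on this circle is determined by the sets of attracting and repelling fixed points, and these are determined by the spectrum through Step 1.

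\textbf{Main obstacle and the $2^s$ count.} I expect the hardest step to be the ambiguity coming from $\mathbb{Z}^2$ subgroups whose elements all fix points, which is exactly what happens at tori bounding a scalloped periodic Seifert piece. There, the boundary tori carry scalloped regions, and the spectrum cannot distinguish the flow from its modification by a ``Dehn twist''-type flip inside the piece; in effect, the two orientations of the fiber action are indistinguishable. I would first show that outside the scalloped periodic pieces the argument of Step 2 gives a unique answer. Inside each such piece, I would show there are exactly two compatible choices of how the flow's fibration direction is realized. Combining one binary choice per piece gives the bound of $2^s$ possibilities.
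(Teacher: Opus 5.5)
This statement is not proved in the paper. It is quoted from \cite{BFraM25} and used as a black box, through Corollary~\ref{cor:nonperispecdeterminespaflow}, so the question is whether your outline would establish it. Your reduction to a $\pi_1(M)$-equivariant conjugacy of orbit spaces is fine, and so is your Step~1, but the central step is assumed rather than proved.

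\textbf{The missing step: relative positions of fixed points.} Step~1 gives information one conjugacy class at a time: whether $g$ acts on $\cO_\phi$ with a fixed point, and with which sign. What determines the bifoliated plane, or equivalently the action on $\partial\cO_\phi$, is the \emph{relative position} of fixed points of different, unrelated elements. For example: does the stable leaf through a fixed point of $g$ meet the unstable leaf through a fixed point of $h$? How do their attracting and repelling points interleave on $\partial\cO_\phi$? Your sentence that the circle action ``is determined by the sets of attracting and repelling fixed points, and these are determined by the spectrum through Step 1'' assumes that this positional information is encoded in $\cP(\phi)$. That is the whole content of the theorem.

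The lozenge discussion does not supply this information. Periodic points joined by a finite chain of lozenges are fixed by a common nontrivial element. So chains say nothing about how fixed points of elements with no common nontrivial power sit relative to each other. Your characterization is also wrong as stated. Point stabilizers in $\cO_\phi$ are cyclic, and each corner bounds only finitely many lozenges. Hence two elements generating a $\mathbb{Z}^2$ never fix points of a common finite chain.

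What is missing is a dictionary between such configurations and membership in $\cP(\phi)$ of \emph{products} of elements. One candidate is a closing or ping-pong lemma: if the leaves through fixed points of $g$ and $h$ cross so as to bound a rectangle, then suitable products $g^nh^m$ act with a fixed point for all large $n,m$, together with a usable converse. Only with such a lemma can you build the conjugacy. You would send a non-corner periodic point (the unique fixed point of some $g$) to the unique fixed point of $g$ for the other flow. You would then prove this is well defined and preserves the crossing relations. Finally you would extend it using density of non-corner periodic points, which is where transitivity enters.

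\textbf{The count $2^s$.} This is likewise asserted rather than derived. A ``Dehn twist-type flip'' and ``the two orientations of the fiber action'' do not specify an alternative flow or bifoliated plane. You do not check that the alternative has the same spectrum. You give no reason why the ambiguity is one binary choice per scalloped periodic Seifert piece rather than, say, one per boundary torus. Nor do you explain why it disappears as soon as one boundary torus of a periodic piece fails to be transverse with two independent periodic classes.

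The claim that ``outside the scalloped periodic pieces Step 2 gives a unique answer'' also presupposes that the reconstruction localizes to JSJ pieces. It does not do so for free, since $\cO_\phi$ and the action on it are global objects. You would need to:
\begin{enumerate}
\item identify the residual symmetry explicitly;
\item show that it preserves $\cP(\phi)$;
\item show that the choices must be made coherently over each scalloped periodic Seifert piece;
\item show that the equivariant map is forced everywhere else.
\end{enumerate}
As it stands, both the rigidity claim and the count restate the theorem rather than advance toward it.
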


As the notion of scalloped periodic Seifert piece is only introduced to state the above result optimally, we do not go into details (see \cite{BFenM25} for more). The takeaway for the reader is that having a scalloped periodic Seifert piece is a fairly restrictive condition, and moreover, for each such region there is only an additional choice of \emph{sign} that is needed to get uniqueness of the isotopy class (see \cite{BFraM25}).

\begin{rmk}
In \cite{BBowM24,BFraM25} the spectrum, or free homotopy data, of a pseudo-Anosov flow is defined as the elements of $\pi_1(M)$ that represent unoriented orbits instead of oriented orbits as here. We make this different choice here to make it more consistent with the correspondence with cylindrical contact homology. The main theorem of \cite{BFraM25} works for either definition of $\cP(\phi)$, up to taking the correct notion of orbit equivalence (i.e., here the orbit equivalences we consider preserves the direction of the flow, while they may preserve or flip them in \cite{BFraM25}).
\end{rmk}

One of the key ideas in this paper is that $\mathcal{P}(\phi)$ can be split into a `peripheral' part and a `non-peripheral' part.
Here, peripherality is taken with respect to a 3-manifold with boundary $M^\circ$ that is the complement of a tubular neighborhood of a collection $\mathcal{C}$ of closed orbits.

The local intersections between the half-leaves of $\mathcal{F}^s$ containing $\mathcal{C}$ and $\partial M^\circ$ determines an isotopy class of multicurve on each component of $\partial M^\circ$. We refer to the isotopy classes of these multicurves as the \textbf{degeneracy curves}, and the slopes of these multicurves as the \textbf{degeneracy slopes}.
These constitute the `peripheral' part of the spectrum. 
The `non-peripheral' part will be discussed in the next subsection.

We also recall the following property, which will allow us to reduce to the case when the underlying 3-manifold is orientable when showing finiteness.

\begin{prop}\label{prop_finite_cover}
If $\phi,\psi$ are two transitive pseudo-Anosov flows on a manifold $N$ and such that there exists a finite cover $\hat N$ of $N$ on which the lifted flows $\widehat \phi$ and $\widehat\psi$ are isotopically equivalent, then $\phi,\psi$ are isotopically equivalent on $N$.
\end{prop}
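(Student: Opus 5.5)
The plan is to work in universal covers and orbit spaces, and to reduce the problem to a statement about group actions on bifoliated planes.

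\textbf{Setup.} Let $p\colon \hat N\to N$ be the finite cover, and let $\hat h\colon \hat N\to\hat N$ be an orbit equivalence from $\widehat\phi$ to $\widehat\psi$ isotopic to the identity. Both $N$ and $\hat N$ have universal cover $\widetilde N$. The lifted flows there, $\widetilde\phi$ and $\widetilde\psi$, have orbit spaces $\cO_\phi$ and $\cO_\psi$. These are bifoliated planes carrying actions of $\pi_1(N)$, and $\pi_1(\hat N)$ is a finite-index subgroup. Lift $\hat h$ to $\widetilde h$ through a lift of the isotopy to the identity. Then $\widetilde h$ commutes with the deck action of $\pi_1(\hat N)$, and it induces a homeomorphism $H\colon \cO_\phi\to\cO_\psi$. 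This $H$ is $\pi_1(\hat N)$-equivariant and sends the stable and unstable foliations to their counterparts.

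\textbf{Key step: upgrade to full equivariance.} I want $H$ to be $\pi_1(N)$-equivariant. Take $g\in\pi_1(N)$ and set $H_g = g^{-1}\circ H\circ g$. Since $\pi_1(\hat N)$ may not be normal, I first pass to a finite-index normal subgroup $\Gamma\subset\pi_1(\hat N)$; this costs nothing because $H$ stays $\Gamma$-equivariant. Now $H_g$ is again a $\Gamma$-equivariant foliation-preserving homeomorphism $\cO_\phi\to\cO_\psi$. It therefore suffices to prove rigidity: such a map is unique. Equivalently, if $f$ is a self-homeomorphism of $\cO_\phi$ preserving both foliations and commuting with $\Gamma$, then $f=\mathrm{id}$.

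\textbf{Proving rigidity (the main obstacle).} Because the $\Gamma$-action comes from a transitive pseudo-Anosov flow on a closed manifold, points of $\cO_\phi$ fixed by some nontrivial element of $\Gamma$ are dense. For $\gamma\in\Gamma$ with fixed point $x$, the map $f$ sends $\mathrm{Fix}(\gamma)$ to itself, and the fixed set is discrete. The delicate issue is lozenge chains: a single $\gamma$ can fix several points, connected by chains of lozenges, and $f$ might shift them. To rule this out I would argue as follows.
\begin{itemize}
\item Along the flow direction, $f$ preserves the cyclic ordering of prongs and the stable/unstable type of each leaf.
\item Chains of lozenges have bounded combinatorics. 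I would use the standard description of $\mathrm{Fix}(\gamma)$ as a chain of lozenges in which corners alternate, and where $f$ acts by an order-preserving bijection of a linearly ordered chain that commutes with the centralizer.
\item If $f$ shifted the chain nontrivially, I would compose with a power to obtain an element commuting with $\Gamma$ that acts freely on such a chain. Such elements arise only from $\mathbb{Z}^2$ subgroups and Seifert-fibered behavior. I would rule that case out directly via the lemma that the centre-like element would be in the centralizer of a finite-index subgroup of $\pi_1(N)$; this forces the corresponding closed orbits in $N$ to be freely homotopic to their own inverses or to be fibers, contradicting the flow direction being preserved.
\end{itemize}
Since $f$ fixes a dense set and is continuous, $f=\mathrm{id}$.

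\textbf{Conclusion.} Rigidity gives $H_g=H$ for all $g$, so $H$ is $\pi_1(N)$-equivariant. By the standard correspondence (Barbot, Haefliger), an equivariant conjugacy of the orbit-space actions, compatible with the induced action on $\pi_1$ being the identity, yields an orbit equivalence $N\to N$ in the homotopy class of the identity. For pseudo-Anosov flows, homotopic orbit equivalences can be made isotopic, so $\phi$ and $\psi$ are isotopically equivalent. The hardest part is the rigidity step in the presence of Seifert pieces. If that step resists, an alternative is to use the spectrum directly: $\mathcal P(\widehat\phi)=\mathcal P(\widehat\psi)$ implies $\mathcal P(\phi)=\mathcal P(\psi)$, since $g$ is in the spectrum iff some power $g^k$ lying in $\pi_1(\hat N)$ is, with $k$ bounded by the index. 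I would then invoke \Cref{thm:specdeterminespaflow}. The remaining work would be to resolve the $2^s$ sign ambiguity on scalloped pieces, which lift to scalloped pieces of $\hat N$ where the signs are known to agree.
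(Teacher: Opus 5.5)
Your overall architecture matches the paper's. You pass to a finite-index normal subgroup $\Gamma$, note that the induced conjugacy $H\colon \cO_\phi\to\cO_\psi$ is $\Gamma$-equivariant, upgrade it to $\pi_1(N)$-equivariance by density plus continuity, and conclude. The reduction to showing that a $\Gamma$-equivariant, foliation-preserving $f=H^{-1}\circ H_g$ is the identity is also sound.

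The gap is in the step you flag as the main obstacle: your argument that $f$ fixes every point of $\mathrm{Fix}(\gamma)$ is not a proof.
\begin{itemize}
\item The map $f$ is a homeomorphism of $\cO_\phi$, not an element of $\pi_1(N)$. So ``an element commuting with $\Gamma$ that acts freely on such a chain'' has no mechanism behind it. The same goes for the appeal to centralizers and $\mathbb{Z}^2$ subgroups, and for the closing contradiction about orbits homotopic to their inverses or to fibers.
\item $\mathrm{Fix}(\gamma)$ need not be a linearly ordered chain; it can branch at corners. For the fiber class of a periodic Seifert piece it is an infinite tree of lozenges, and an automorphism of such a tree commuting with a large group action is not obviously trivial.
\end{itemize}
Your final sentence, ``$f$ fixes a dense set'', rests on this step, so the proposal as written does not prove the proposition.

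The missing idea is that you never need to control corners. Restrict to \emph{non-corner} fixed points, i.e.\ fixed points that are not corners of any lozenge.
\begin{itemize}
\item Such a point is the unique fixed point of any nontrivial element fixing it. A second fixed point would be joined to it by a chain of lozenges, making it a corner.
\item For a transitive flow these points are dense in $\cO_\phi$; this is the density result from \cite{BFraM25} that the paper uses.
\item If $x$ is the unique fixed point of $a\in\pi_1(N)$, some power $\gamma=a^k$ lies in $\Gamma$, and $x$ is still the unique fixed point of $\gamma$, by the same chain-of-lozenges reason.
\item Since $f$ commutes with $\gamma$, we get $f(x)\in\mathrm{Fix}(\gamma)=\{x\}$.
\end{itemize}
So $f$ is the identity on a dense set, hence everywhere, with no case analysis on lozenge chains or Seifert pieces. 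This is exactly the paper's argument, phrased there directly for the conjugacy $h$: $h(gx)$ is the unique fixed point of $g\gamma g^{-1}\in\pi_1(\hat N)$ in $\cO_\psi$, hence equals $g\,h(x)$.

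Your spectrum-based fallback does not close the gap either. It leaves the $2^s$ sign ambiguity on scalloped pieces to the unjustified assertion that ``the signs are known to agree''.
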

\begin{proof}
Up to taking a further finite cover, we may assume that $\widehat N$ is a regular cover of $N$, i.e. $\pi_1(\widehat N)$ is a normal subgroup of $\pi_1(N)$. 

Since $\widehat \phi$ and $\widehat\psi$ are isotopically equivalent, the actions of $\pi_1(\widehat N)$ on the orbit spaces of $\phi$ and $\psi$ are conjugated by an homeomorphism $h\colon \cO_\phi \to \cO_\psi$, where $\cO_\phi, \cO_\psi$ are the orbit spaces of $\phi$ and $\psi$ respectively. Now consider any $g\in \pi_1(M)$ and $x\in \cO_\phi$ a non-corner fixed point, i.e. $x$ is the unique fixed point of some element $f\in \pi_1(\widehat N)$. Then $g\cdot x$ is the unique fixed point of $gfg^{-1} \in \pi_1(\widehat N)$ in $\cO_\phi$, and therefore $h(g x)$ is the unique fixed point of $gfg^{-1}$ in $\cO_\psi$, so corresponds to $g\cdot h(x)$. By density of non corner fixed points for transitive flows (\cite{BFraM25}) and continuity of the action, we deduce that for any $y\in \cO_\phi$, $h(g\cdot y) = g\cdot h(y)$ for all $g\in \pi_1(N)$. So $\phi$ and $\psi$ are isotopically equivalent.
\end{proof}

\subsection{Blow up}
In \cite{Zun25}, a special type of blow-up construction was introduced which allows one to modify a pseudo-Anosov flow near a periodic orbit without losing control of the free homotopy classes of periodic orbits. This construction will be essential for us, and we recall it here:

\begin{construction}[Solid blow-up of a flow]\label{const:blowup}
    Let $\gamma$ be a closed orbit of a pseudo-Anosov flow $\phi$ on an oriented closed 3-manifold $M$. We can modify $\phi$ near $\gamma$ into a flow $\phi_{\bullet \gamma}$ on $M$, so that there is a closed tubular neighborhood $\nu$ of $\gamma$ with the following properties:
\begin{itemize}
    \item The restriction of $\phi_{\bullet \gamma}$ to $M \backslash \nu$ is isotopically equivalent to the restriction of $\phi$ to $M$.
    \item The restriction of $\phi_{\bullet \gamma}$ to $\partial \nu$ consists of a finite number of closed orbits. Each such closed orbit is a positive hyperbolic closed orbit of $\phi_{\bullet \gamma}$. The isotopy class of these closed orbits coincides with the local intersection between the half-leaves of $\mathcal{F}^s$ containing $\gamma$ and $\partial \nu$.
    \item The restriction of $\phi_{\bullet \gamma}$ to $\intr \nu$ is isotopically equivalent to a flow of the form 
    \begin{equation} \label{eq:blowupreebflow}
    (r,\theta,z) \mapsto (r,\theta+s(r)t,z+t)
    \end{equation}
    in cylindrical coordinates, where $r\in [0,1]$, $\theta \in \mathbb R/\mathbb Z$, $t\in \mathbb R/\mathbb Z$, for some function $s$ with the following properties:
    \begin{itemize}
        \item $s$ extends to a smooth even function around $0$.
        \item $s$ is decreasing near $\partial \nu$ and takes values in $(-1,0)$ on $r > 0$. 
        
        \item Let $s_1 = \lim_{r \to 1^-} s(r)$ be the limiting value of $s$ on $\partial \nu$. Then the degeneracy slope is $s_1 \frac{\partial}{\partial \theta} + \frac{\partial}{\partial z}$.
    \end{itemize}
    In particular, the fact that $s$ takes non-integral values on $r > 0$ implies that the core of $\nu$ is an elliptic orbit of $\phi_{\bullet \gamma}$, and it is the unique closed orbit in $\intr \nu$ in its homotopy class.
\end{itemize}

See \Cref{fig:paflowblowup} and \cite[Section 3.1]{Zun25}, where this operation is referred to as \textit{counterclockwise blow-up}. 
We refer to $\phi_{\bullet \gamma}$ as a \textbf{solid blow-up} of $\phi$ at $\gamma$, and refer to $\nu$ as the \textbf{blow-up region}.
Generalizing this operation, we can talk about a solid blow-up $\phi_{\bullet \mathcal{C}}$ of $\phi$ at any collection $\mathcal{C}$ of closed orbits.
\end{construction}

\begin{rmk}
    When we do a solid blow-up $\phi_{\bullet \mathcal{C}}$ on a collection of orbits, we have as many functions $s$ and local cylindrical coordinates as there are components of $\mathcal C$. However, to spare everyone the use of more indices, we will slightly abuse notations, and use the same notation $s(r)$ for each instance of the functions $s$.
\end{rmk}

\begin{figure}
    \centering
    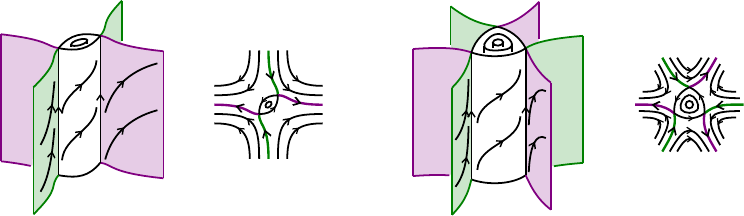
    \caption{Local picture of solid blowing up a pseudo-Anosov at a nonsingular orbit (left) and a singular orbit (right). Compare with \Cref{fig:paflow}.}
    \label{fig:paflowblowup}
\end{figure}

The word `solid' in `solid blow-up' is to distinguish it from the following variation of the construction: 
\begin{defn}[Boundary blow-up]
Let $\gamma$ be a closed orbit of a pseudo-Anosov flow $\phi$ on an oriented closed 3-manifold $M$.
A \textbf{boundary blow-up} of $\phi$ at $\gamma$, denoted by $\phi^{\circ \gamma}$, is defined by doing a solid blow-up construction on $\phi$ at $\gamma$ then restricting the solid blown-up flow to the complement of an open flow-invariant tubular neighborhood $\nu_0 = \{r < r_0\}$ of $\gamma$, where $r_0 < 1$ but is close enough to $1$ so that the function $s$ from \Cref{eq:blowupreebflow} is decreasing on $[r_0,1]$.
In particular $\phi^{\circ \gamma}$ is a flow defined on the manifold with boundary $M^{\circ \gamma} \cong M \backslash \nu_0$.

More generally, given a collection $\mathcal{C}$ of closed orbits of $\phi$, we can consider the boundary blow-up $\phi_{\circ \mathcal{C}}$ of $\phi$ at $\mathcal{C}$. Note that while a flow and its solid blow-up are on the same manifold, a flow and its boundary blow-up live on two formally distinct manifolds.
\end{defn}

Yet more generally, given two disjoint collections $\mathcal{C}_0, \mathcal{C}_1$ of closed orbits of $\phi$, we can boundary blow-up at $\mathcal{C}_0$ and solid blow-up at $\mathcal{C}_1$ to get a flow $\phi^{\circ \mathcal{C}_0}_{\bullet \mathcal{C}_1}$ on $M^{\circ \mathcal{C}_0}$.
This is done by first solid blowing-up at $\mathcal{C}_0 \cup \mathcal{C}_1$ then removing flow-invariant tubular neighborhoods of $\mathcal{C}_0$.
When $\mathcal{C}_0$ or $\mathcal{C}_1$ are understood, we will just write $\phi^\circ$ for $\phi^{\circ \mathcal{C}_0}$ and $\phi_\bullet$ for $\phi_{\bullet \mathcal{C}_1}$ etc.

Continuing the discussion on the spectrum $\mathcal{P}(\phi)$ from the last subsection, the following definition captures its `non-peripheral' part.
\begin{defn}
Let $\psi$ be a flow on an oriented 3-manifold with boundary $M^\circ$. The \textbf{primitive, non-peripheral spectrum} of $\psi$ is the set
\begin{align*}
\mathcal{P}^\circ(\psi) = \{g \mid & \text{ $g$ is the free homotopy class of a closed orbit $\gamma$ of $\psi$},\\
& \text{ where $g$ is primitive and non-peripheral}\}.
\end{align*}
\end{defn}

Note that although the isotopy equivalence class of a boundary blow-up $\phi^{\circ}$ is not uniquely determined by $\phi$, the set $\mathcal{P}^\circ(\phi^{\circ})$ is uniquely determined by $\phi$.

\begin{prop} \label{prop:nonperispecdeterminesspec}
Let $\phi$ be a pseudo-Anosov flow and let $\mathcal{C}$ be a collection of closed orbits of $\phi$. Then $\mathcal{C}$ and $\mathcal{P}^\circ(\phi^{\circ \mathcal{C}})$ uniquely determine $\mathcal{P}(\phi)$. 
\end{prop}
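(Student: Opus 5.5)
We would reconstruct $\mathcal{P}(\phi)$ orbit by orbit, by comparing the closed orbits of $\phi$ with those of the boundary blow-up $\phi^{\circ\mathcal{C}}$ and then pushing free homotopy classes forward under the inclusion $\iota\colon M^{\circ\mathcal{C}}\hookrightarrow M$. By \Cref{const:blowup}, the closed orbits of $\phi^{\circ\mathcal{C}}$ come in three kinds:
\begin{itemize}
\item the closed orbits of $\phi$ not in $\mathcal{C}$, which persist (up to isotopy) in $M\setminus \nu$;
\item the closed orbits on the boundary tori, which are the degeneracy curves;
\item no others, since the collar $\{r_0\le r\le 1\}$ contains no closed orbits in the interior (the function $s$ is strictly monotone there, so orbits on the tori $\{r=c\}$ are closed only at rational values of $s$; we should choose the blow-up so that these contribute only peripheral classes, which they do anyway since they lie on boundary-parallel tori).
\end{itemize}
Hence every closed orbit of $\phi$ is either a component of $\mathcal{C}$, whose free homotopy class in $M$ is determined by $\mathcal{C}$ itself, or is freely homotopic through the isotopy equivalence to a closed orbit $\gamma'$ of $\phi^{\circ\mathcal{C}}$ with $\iota_*[\gamma']=[\gamma]$. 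Multiples of the classes of $\mathcal{C}$ (iterates of these orbits) are also determined by $\mathcal{C}$.

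The key steps, in order, are as follows. First, write $\mathcal{P}(\phi)=\{[c^k] : c\in\mathcal{C},\,k\ge1\}\cup \iota_*(\text{classes of closed orbits of }\phi^{\circ\mathcal{C}}\text{ off }\partial)$. Second, handle non-primitive classes: a closed orbit of $\phi$ traversed $k$ times represents $g^k$, where $g$ is the class of the once-traversed orbit. It therefore suffices to know the classes of primitive orbits and then take all positive powers. Third, deal with orbits of $\phi^{\circ\mathcal{C}}$ off the boundary whose class in $\pi_1(M^{\circ\mathcal{C}})$ is non-primitive or peripheral. A non-primitive class $g=h^k$ with $\gamma'$ simple is impossible for a closed orbit of a pseudo-Anosov flow once we pass to the interior, since $\gamma'$ persists as an orbit of $\phi$ outside $\mathcal{C}$. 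Such an orbit would then be an orbit of $\phi$ homotopic into $M\setminus\nu$ to a power, and we would invoke the standard fact (Fenley) that a primitive orbit of a pseudo-Anosov flow represents a primitive element, or at least that $\iota_*h$ is recoverable. A peripheral class would make $\gamma'$ freely homotopic in $M$ to a multiple of a component $c\in\mathcal{C}$, or to a degeneracy curve. Degeneracy curves are themselves homotopic in $\nu$ to powers of the core $c$. So the image under $\iota_*$ of any peripheral class lies in $\{[c^k]\}$ or a class of the boundary torus, which maps into the cyclic group generated by $[c]$. Thus peripheral orbits contribute only classes already of the form $[c^k]$, provided we check that every such orbit class is a positive power of $[c]$.

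The main obstacle is the last point: controlling peripheral classes and orientation. An orbit of $\phi$ freely homotopic to a negative power of $[c]$, or to a power of a degeneracy curve, must have its class in $M$ be a positive power of $[c]$ or already accounted for. We would first try to use the fact that degeneracy curves project to positive multiples of $[c]$, which follows from the slope $s_1\partial_\theta+\partial_z$ with $s_1\in(-1,0)$ having positive $\partial_z$-component. We would also appeal to the known structure of free homotopy classes of pseudo-Anosov orbits (orbits freely homotopic to $c^k$ or its inverse lie in chains of lozenges, and we then argue they are peripheral in $M^{\circ\mathcal{C}}$ only via the boundary). Granting this, we get
\[
\mathcal{P}(\phi)=\{[c]^k : c\in\mathcal{C},\,k\ge1\}\cup\{\iota_*(g)^k : g\in\mathcal{P}^\circ(\phi^{\circ\mathcal{C}}),\,k\ge1\}\cup\{\text{images of peripheral classes}\},
\]
and the last set is contained in the first. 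This expresses $\mathcal{P}(\phi)$ purely in terms of $\mathcal{C}$ and $\mathcal{P}^\circ(\phi^{\circ\mathcal{C}})$.
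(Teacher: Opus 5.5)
There is a genuine gap: the step you grant at the end is false. Your decomposition is essentially the paper's. The paper claims $\mathcal{P}(\phi)=[\mathcal{C}]_+\cup\mathcal{P}^\circ(\phi^{\circ\mathcal{C}})_+$, reduces to primitive classes, and then splits into ``homotopic to an orbit of $\mathcal{C}$'' versus ``primitive non-peripheral orbit of $\phi^{\circ\mathcal{C}}$''.

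The problem is that an orbit outside $\mathcal{C}$ can be peripheral in $M^{\circ\mathcal{C}}$ while representing a \emph{negative} power of $[c]$.
\begin{itemize}
\item Let $\phi$ be the geodesic flow on $T^1\Sigma$, let $c$ be the tangent lift of a simple closed geodesic $\sigma$, let $\bar c$ be the tangent lift of $\sigma$ traversed backwards, and take $\mathcal{C}=\{c\}$.
\item The annulus of unit vectors based on $\sigma$ and pointing to its left has boundary $c\cup\bar c$, and its interior misses both.
\item Truncating this annulus at $\partial\nu(c)$ shows that $\bar c$ is peripheral in $M^{\circ\mathcal{C}}$. The annulus itself shows $[\bar c]=[c]^{-1}$ in $M$.
\item This class is not a positive power of $[c]$: already its image $[\sigma]^{-1}$ in $\pi_1\Sigma$ is not conjugate to any $[\sigma]^k$ with $k\geq 1$.
\item The class is primitive, and $\bar c$ is the only closed orbit in it. Since $\bar c$ is peripheral, the class is not in $\mathcal{P}^\circ(\phi^{\circ\mathcal{C}})_+$ either.
\end{itemize}
So your ``images of peripheral classes'' are not contained in $\{[c]^k : k\geq 1\}$, and your formula omits $[c]^{-1}\in\mathcal{P}(\phi)$. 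The degeneracy-slope computation only controls orbits inside the collar. Chains of lozenges cause the problem rather than cure it: the two corners of a lozenge fixed by $g$ are translated by $g$ in opposite directions, so their orbits are freely homotopic to each other's inverses.

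The paper's proof passes over this case as well. After reducing to a primitive class, it asserts that an orbit not homotopic to an element of $\mathcal{C}$ is non-peripheral, which the example contradicts for oriented classes. What the argument does prove is the statement for the spectrum closed under inversion. That is the version \cite{BFraM25} works with, provided one allows orbit equivalences that reverse the flow.
\begin{itemize}
\item Suppose $g$ is the primitive class of an orbit outside $\mathcal{C}$ and $g$ is peripheral in $M^{\circ\mathcal{C}}$.
\item Then, up to conjugacy, $g$ lies in the image of $\pi_1(\partial\nu(c))$, which is contained in $\langle[c]\rangle$.
\item Primitivity then forces $g=[c]^{\pm1}$.
\end{itemize}
Hence $\mathcal{P}(\phi)\cup\mathcal{P}(\phi)^{-1}$ is determined by $\mathcal{C}$ and $\mathcal{P}^\circ(\phi^{\circ\mathcal{C}})$.

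Separately, your treatment of non-primitive classes rests on homotopic indivisibility of simple orbits. That is a result about Anosov flows, and it is not what is needed here. The right tool, which the paper uses, is \cite[Corollary 1.4.2]{BM25}: if $g^k$ is represented by a closed orbit, then so is $g$. This lets you replace any orbit by one in its primitive root class, whether or not the original orbit is simple. Primitivity in $\pi_1(M)$ then implies primitivity in $\pi_1(M^{\circ\mathcal{C}})$.
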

\begin{proof} 
Let $[\mathcal{C}]_+$ be the set of positive multiples of free homotopy classes of orbits in $\mathcal{C}$.
Let $\mathcal{P}^\circ(\phi^{\circ})_+$ be the set of positive multiples of elements of $\mathcal{P}^\circ(\phi^{\circ})$, considered as free homotopy classes in $M$.
We claim that $\mathcal{P}(\phi) = [\mathcal{C}]_+ \cup \mathcal{P}^\circ(\phi^{\circ})_+$.

The backward inclusion $\mathcal{P}(\phi) \supset [\mathcal{C}]_+ \cup \mathcal{P}^\circ(\phi^{\circ})_+$ is clear.
For the forward inclusion, let $\gamma$ be a closed orbit of $\phi$. Suppose the free homotopy class $[\gamma]$ is a positive multiple of a primitive element $g$.
By \cite[Corollary 1.4.2]{BM25}, there exists a closed orbit with free homotopy class $g$. Up to replacing $\gamma$ with this orbit, we can assume that $[\gamma]$ is primitive.
If $\gamma$ is homotopic to an orbit in $\mathcal{C}$, we are done.
Otherwise, $\gamma$ is a closed orbit of $\phi^{\circ}$ with primitive non-peripheral free homotopy class, so $[\gamma] \in \mathcal{P}^\circ(\phi^{\circ})_+$.
\end{proof}

When combined with \Cref{thm:specdeterminespaflow}, this gives the following corollary.

\begin{cor} \label{cor:nonperispecdeterminespaflow}

Let $\phi$ be a transitive pseudo-Anosov flow with $s$ scalloped periodic Seifert pieces. Then $\mathcal{C}$ and $\mathcal{P}^\circ(\phi^{\circ \mathcal{C}})$ determine the isotopy equivalence class of $\phi$ up to $2^s$ possibilities.
\end{cor}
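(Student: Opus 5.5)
This follows by composing the two results just established. By \Cref{prop:nonperispecdeterminesspec}, the data of $\mathcal{C}$ together with $\mathcal{P}^\circ(\phi^{\circ \mathcal{C}})$ determines the full spectrum $\mathcal{P}(\phi)$. The reconstruction is explicit:
\[
\mathcal{P}(\phi) = [\mathcal{C}]_+ \cup \mathcal{P}^\circ(\phi^{\circ\mathcal{C}})_+ ,
\]
where both sets on the right are read as free homotopy classes in $M$ via the inclusion $M^{\circ\mathcal{C}} \hookrightarrow M$. This inclusion is canonical once $\mathcal{C}$ is fixed as a collection of curves in $M$, since $M^{\circ\mathcal{C}}$ is just the complement of a tubular neighborhood of $\mathcal{C}$. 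Then \Cref{thm:specdeterminespaflow}, applied to the transitive flow $\phi$ with its $s$ scalloped periodic Seifert pieces, shows that $\mathcal{P}(\phi)$ determines the isotopy equivalence class of $\phi$ up to $2^s$ possibilities.

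The plan is to phrase this as a statement about pairs of flows. Suppose $\phi,\psi$ are two transitive pseudo-Anosov flows with $s$ scalloped periodic Seifert pieces, and suppose there are collections of closed orbits $\mathcal{C}_\phi$ and $\mathcal{C}_\psi$ that agree as collections of free homotopy classes in $M$ and satisfy $\mathcal{P}^\circ(\phi^{\circ \mathcal{C}_\phi}) = \mathcal{P}^\circ(\psi^{\circ \mathcal{C}_\psi})$. The proof of \Cref{prop:nonperispecdeterminesspec} then gives $\mathcal{P}(\phi) = \mathcal{P}(\psi)$, and \Cref{thm:specdeterminespaflow} bounds the number of isotopy equivalence classes sharing this spectrum by $2^s$.

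The only point needing care is what "$\mathcal{C}$ determines" means, because the set $\mathcal{P}^\circ$ lives in $\pi_1(M^{\circ\mathcal{C}})$. This forces us to take $\mathcal{C}$ up to isotopy, not merely up to free homotopy, so that the complements, and hence the two non-peripheral spectra, can be compared. Under that convention the pushforward to free homotopy classes of $M$ is well defined, and so is $[\mathcal{C}]_+$. No other obstacle is expected: the corollary is a direct concatenation of the two earlier statements.
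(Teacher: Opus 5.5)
Your proposal is correct and is exactly the paper's argument: the paper obtains the corollary by composing \Cref{prop:nonperispecdeterminesspec} (the data $\mathcal{C}$ and $\mathcal{P}^\circ(\phi^{\circ\mathcal{C}})$ recover $\mathcal{P}(\phi)=[\mathcal{C}]_+\cup\mathcal{P}^\circ(\phi^{\circ\mathcal{C}})_+$) with \Cref{thm:specdeterminespaflow}. Your remark that $\mathcal{C}$ should be fixed up to isotopy, so that the two non-peripheral spectra live on the same complement, is the right convention. The reconstruction itself only needs the classes pushed forward to $M$.
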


We remark that the proof for \Cref{prop:nonperispecdeterminesspec} can be simplified if we omit the word `primitive' from the definition of $\mathcal{P}^\circ$. 
However, we choose to include it in order to simplify the following lemma regarding Lefschetz indices.

\begin{lem} \label{lem:blowupnonperispec}
Let $\phi$ be a pseudo-Anosov flow, and let $\mathcal{C}_0, \mathcal{C}_1$ be two disjoint collections of closed orbits of $\phi$. 
Then:
\begin{enumerate}
    \item Each closed orbit of $\phi^{\circ \mathcal{C}_0}_{\bullet \mathcal{C}_1}$ is homotopically essential in $M^{\circ \mathcal{C}_0}$. In fact, each closed orbit of $\phi^{\circ \mathcal{C}_0}_{\bullet \mathcal{C}_1}$ is freely homotopic to a closed orbit of $\phi$ in $M$, hence also homotopically essential in $M$.
    \item $\mathcal{P}^\circ(\phi^{\circ \mathcal{C}_0}_{\bullet \mathcal{C}_1}) = \mathcal{P}^\circ(\phi^{\circ \mathcal{C}_0})$.
    \item For every $g \in \mathcal{P}^\circ(\phi^{\circ \mathcal{C}_0}_{\bullet \mathcal{C}_1})$, the sum of Lefschetz indices of closed orbits of $\phi^{\circ \mathcal{C}_0}_{\bullet \mathcal{C}_1}$ with free homotopy class $g$ is well-defined (possibly infinite) and nonzero.
\end{enumerate}
\end{lem}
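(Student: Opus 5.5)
The plan is to classify the closed orbits of $\psi := \phi^{\circ \mathcal{C}_0}_{\bullet \mathcal{C}_1}$ into three types and handle each one. \emph{Type A} orbits lie outside all blow-up regions; these are exactly the closed orbits of $\phi$ not in $\mathcal{C}_0\cup\mathcal{C}_1$. \emph{Type B} orbits are the boundary orbits on $\partial\nu$ for $\nu$ a blow-up region of either collection; they are positive hyperbolic and isotopic on $\partial \nu$ to the local intersection curves of the stable half-leaves through $\gamma$. \emph{Type C} orbits are the elliptic cores of the solid blow-up regions around $\mathcal{C}_1$. The tori $\partial\nu$ for $\gamma\in\mathcal{C}_0$ that become boundary of $M^{\circ\mathcal{C}_0}$ carry only type B orbits. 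By formula \eqref{eq:blowupreebflow}, the region $\{r_0\le r<1\}$ has no closed orbits (as $s\in(-1,0)$ is non-integral), and neither does $\intr\nu\setminus\{r=0\}$.

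For (1), type A orbits are closed orbits of $\phi$ outright. Type C orbits are isotopic to $\gamma\in\mathcal{C}_1$. A type B orbit on $\partial\nu$ is isotopic in $\nu$ to a curve of slope $s_1\partial_\theta+\partial_z$, and the stable half-leaf of $\gamma$ realises a free homotopy in $M$ between this orbit and $\gamma$ (or a multiple of $\gamma$ when the half-leaf rotates around $\gamma$, which is still an orbit of $\phi$). Since closed orbits of a pseudo-Anosov flow on a closed manifold are homotopically essential (a standard fact, e.g.\ from \cite{BM25}), each orbit is essential in $M$. Essential in $M$ implies essential in $M^{\circ\mathcal{C}_0}$, because the inclusion-induced map on free homotopy classes sends a trivial class to a trivial one.

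For (2), type B and type C orbits are peripheral or nonprimitive-peripheral in $M^{\circ\mathcal{C}_0}$, or else are homotopic in $M^{\circ\mathcal{C}_0}$ to a curve on the torus $\partial\nu$. Here I have to be careful. A type C orbit in $\nu_{\mathcal C_1}$ is homotopic to a boundary orbit but is not peripheral in $M^{\circ\mathcal{C}_0}$ in general, since its region is not removed. However, the type C core of $\nu$ for $\gamma\in\mathcal{C}_1$ is freely homotopic to $\gamma$, which is an orbit of $\phi^{\circ\mathcal{C}_0}$; likewise type B orbits on $\partial\nu$ for $\gamma \in\mathcal{C}_1$ are homotopic to orbits of $\phi$ lying in $M^{\circ\mathcal{C}_0}$. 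So every class realised by $\psi$ is realised by $\phi^{\circ\mathcal{C}_0}$, and vice versa, since $\gamma\in\mathcal{C}_1$ is replaced by the core. Type B orbits on $\partial M^{\circ\mathcal{C}_0}$ are peripheral and thus excluded from both sides. The spectra therefore agree.

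For (3), I compute the Lefschetz index of each type. A type A orbit that is nonsingular and primitive has index $-1$ if positive hyperbolic and $+1$ if negative hyperbolic. Singular and multiply covered orbits have other indices, but because $g$ is primitive only primitive iterates occur, and for a $k$-prong orbit with rotation the index is $1-k$ or $1$. Type B orbits have index $-1$, and type C elliptic orbits have index $+1$. The main obstacle is showing that the sum is nonzero. The approach I would try first is a homotopy-class consistency argument in the style of \cite{BM25}: all orbits of $\phi$ in a primitive class $g$ are connected by chains of lozenges, and orbits in a chain alternate in a way that makes their indices all have the same sign (all $\le -1$ in the twisted setting, or all $+1$ when the class contains negative hyperbolic orbits). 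That means no cancellation occurs and the sum is a possibly infinite sum of same-sign terms, hence well defined and nonzero. The blow-up then replaces $\gamma\in\mathcal{C}_1$ by elliptic plus boundary hyperbolic orbits. The indices of these sum to $\gamma$'s original index, since the flow on $\nu$ is a local modification preserving the total fixed-point index of the return map, so the sign is preserved.
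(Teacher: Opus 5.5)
Your overall plan matches the paper's. Orbits away from the blow-up regions are orbits of $\phi$. The lozenge structure of free homotopy classes gives the dichotomy ``all orbits in the class non-rotating (index $\le -1$)'' versus ``a single rotating orbit (index $+1$)''. The solid blow-up at $\gamma\in\mathcal{C}_1$ preserves the index sum in the class of $\gamma$.

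\textbf{The gap: your classification of closed orbits is wrong.} You assert that $\intr\nu\setminus\{r=0\}$ and the collar $\{r_0\le r<1\}$ contain no closed orbits because $s\in(-1,0)$ is non-integral. Non-integrality only excludes orbits that close up after one turn in $z$.

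\begin{itemize}
\item $s$ is continuous, decreasing near $\partial\nu$, and tends to the rational degeneracy slope. So it takes rational values $-a/b$ with $0<a<b$.
\item Each such invariant torus $\{r=r_1\}$ is foliated by closed orbits of class $-a[\partial_\theta]+b[\partial_z]$, with $b\ge 2$.
\item These orbits are not of type A, B or C, so your arguments for (1)--(3) do not cover them.
\end{itemize}

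These orbits are exactly why the paper builds primitivity into $\mathcal{P}^\circ$:
\begin{itemize}
\item In a solid blow-up region around $\gamma\in\mathcal{C}_1$, the meridian bounds a disk in $M^{\circ\mathcal{C}_0}$. So such an orbit represents the $b$-th power of the core class and is excluded as non-primitive.
\item In the collar around $\gamma\in\mathcal{C}_0$, such an orbit is peripheral.
\item For (1), such an orbit is freely homotopic in $M$ to the $b$-fold cover of $\gamma$, hence essential.
\end{itemize}
Note also that $\partial M^{\circ\mathcal{C}_0}$ is $\{r=r_0\}$, not $\partial\nu=\{r=1\}$. The whole collar between them lies in $M^{\circ\mathcal{C}_0}$.

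\textbf{The same oversight affects the last step of (3).} Your claim that the indices of the elliptic core and the boundary hyperbolic orbits sum to $\gamma$'s original index is false when the prongs rotate along $\gamma$. For a negative hyperbolic $\gamma$ (index $+1$), the blow-up yields the core (index $+1$) and one positive hyperbolic boundary orbit winding twice (index $-1$), totalling $0$.

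What is true, and what the paper checks case by case, is the following:
\begin{itemize}
\item If the prongs rotate, the boundary orbits are proper powers of the core class and do not lie in the primitive class of $\gamma$. Only the core counts, giving $+1=+1$.
\item If they do not rotate, the $k$ boundary orbits do lie in the class of $\gamma$, giving $1+k\cdot(-1)=1-k$.
\end{itemize}
Your fixed-point-index heuristic gives this only if you count fixed points of the first-return map rather than all periodic orbits on $\partial\nu$, and you need to say so explicitly.

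\textbf{On the dichotomy.} The mechanism is not an ``alternation'' along the chain. If $g$ fixes a lozenge, it fixes the two half-leaves bounding it at each corner, so no corner can have rotating prongs. Hence a class containing a rotating orbit contains only that orbit, which is sharper than your ``all $+1$''.
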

\begin{proof}

The closed orbits of $\phi^\circ$ agree with that of $\phi$ outside of the blow-up region around $\mathcal{C}_0$. 
Meanwhile, inside the blow-up region around $\mathcal{C}_0$, each closed orbit has peripheral free homotopy class, thus do not count towards $\mathcal{P}^\circ(\phi^\circ)$. 
Moreover, such peripheral closed orbits are freely homotopic to the corresponding element of $\mathcal{C}_0$.
Finally, for every $g \in \mathcal{P}^\circ(\phi^\circ)$, one of the following holds:
\begin{enumerate}
    \item All closed orbits of $\phi^\circ$ with free homotopy class $g$ are positive hyperbolic, thus the sum of Lefschetz indices is a well-defined number between $-1$ and $-\infty$.
    \item There is exactly one closed orbit of $\phi^\circ$ with free homotopy class $g$ and it is negative hyperbolic, thus the sum of Lefschetz indices is $+1$.
\end{enumerate}

Now going from $\phi^\circ$ to $\phi^\circ_\bullet$, all the closed orbits of $\phi^\circ_\bullet$ in the blow-up region around $\mathcal{C}_1$, except the core orbits and possibly the closed orbits on the boundary of the blow-up region, do not count towards $\mathcal{P}^\circ(\phi^\circ_\bullet)$ since they have non-primitive free homotopy class. This is the place where primitivity comes into play in the definition of $\mathcal{P}^\circ$.
Moreover, these closed orbits are freely homotopic to the corresponding element of $\mathcal{C}_1$.

It remains to account for the sum of Lefschetz indices for the core orbit and possibly the boundary orbits.
For each $\gamma \in \mathcal{C}_1$, there are two cases:
\begin{enumerate}
    \item The prongs do not rotate along $\gamma$ . Then for $\phi^\circ$, the Lefschetz index of $\gamma$ is $1-k$, where $k$ is the number of prongs at $\gamma$. Meanwhile, for $\phi^\circ_\bullet$, the Lefschetz index of the core orbit of the blow-up region $\nu_\gamma$ is $+1$, and that of the $k$ closed orbits on $\partial \nu_\gamma$ are $-1$.
    \item The prongs rotate along $\gamma$. Then for $\phi^\circ$, the Lefschetz index of $\gamma$ is $+1$. Meanwhile, for $\phi^\circ_\bullet$, the Lefschetz index of the core orbit of the blow-up region $\nu_\gamma$ is $+1$, and the closed orbits on $\partial \nu_\gamma$ have non-primitve homotopy class hence does not count. 
\end{enumerate}
In both cases, the sum of Lefschetz indices for $\phi^\circ$ and for $\phi^\circ_\bullet$ are the same, thus well-definedness of the sum for $\phi^\circ_\bullet$ follows from the corresponding statement for $\phi^\circ$, which we argued for in the first paragraph.
\end{proof}

\subsection{Birkhoff sections} \label{subsec_birkhoff}

Let $\phi$ be a flow on an oriented closed 3-manifold $M$.
A \textbf{partial Birkhoff section} for $\phi$ is an immersed cooriented surface with boundary $S$ in $M$ where
\begin{itemize}
    \item the interior of $S$ is embedded and positively transverse to $\phi$, and
    \item the boundary components of $S$ cover closed orbits of $\phi$.
\end{itemize}
A \textbf{Birkhoff section} for $\phi$ is a partial section $S$ where every flow line intersects $S$ in finite forward and backward time.

Observe that a partial Birkhoff section without any boundary components is just a transverse surface, and a Birkhoff section without any boundary components is just a global section (also known as a cross section in the literature).
Generalizing in this direction, a cooriented surface $S$ is \textbf{almost transverse} to a pseudo-Anosov flow $\phi$ if it is positively transverse to $\phi$ in the complement of the singular orbits.
See \Cref{fig:almosttransverse}.

\begin{figure}
    \centering
    \begingroup%
  \makeatletter%
  \providecommand\color[2][]{%
    \errmessage{(Inkscape) Color is used for the text in Inkscape, but the package 'color.sty' is not loaded}%
    \renewcommand\color[2][]{}%
  }%
  \providecommand\transparent[1]{%
    \errmessage{(Inkscape) Transparency is used (non-zero) for the text in Inkscape, but the package 'transparent.sty' is not loaded}%
    \renewcommand\transparent[1]{}%
  }%
  \providecommand\rotatebox[2]{#2}%
  \newcommand*\fsize{\dimexpr\f@size pt\relax}%
  \newcommand*\lineheight[1]{\fontsize{\fsize}{#1\fsize}\selectfont}%
  \ifx\svgwidth\undefined%
    \setlength{\unitlength}{86.59554129bp}%
    \ifx\svgscale\undefined%
      \relax%
    \else%
      \setlength{\unitlength}{\unitlength * \real{\svgscale}}%
    \fi%
  \else%
    \setlength{\unitlength}{\svgwidth}%
  \fi%
  \global\let\svgwidth\undefined%
  \global\let\svgscale\undefined%
  \makeatother%
  \begin{picture}(1,1.18801546)%
    \lineheight{1}%
    \setlength\tabcolsep{0pt}%
    \put(0,0){\includegraphics[width=\unitlength,page=1]{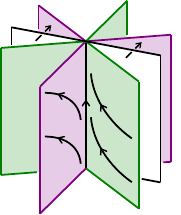}}%
    \put(0.91289428,0.1013445){\color[rgb]{0,0,0}\makebox(0,0)[lt]{\lineheight{1.25}\smash{\begin{tabular}[t]{l}$S$\end{tabular}}}}%
  \end{picture}%
\endgroup%

    \caption{For the purposes of this paper, a cooriented surface $S$ is almost transverse to a pseudo-Anosov flow $\phi$ if it is positively transverse to $\phi$ in the complement of the singular orbits.}
    \label{fig:almosttransverse}
\end{figure}

Now let $S$ be a partial Birkhoff section and $c$ be a boundary component of $S$.
If the orientation of $c$ as a closed orbit of $\phi$ agrees with its orientation as a boundary component of $S$, we say that $c$ is a \textbf{positive} boundary component. Otherwise we say that $c$ is a \textbf{negative} boundary component.
See \Cref{fig:birkhoffsectionboundary}.

\begin{figure} 
    \centering
    \selectfont \fontsize{14pt}{14pt}
    \begingroup%
  \makeatletter%
  \providecommand\color[2][]{%
    \errmessage{(Inkscape) Color is used for the text in Inkscape, but the package 'color.sty' is not loaded}%
    \renewcommand\color[2][]{}%
  }%
  \providecommand\transparent[1]{%
    \errmessage{(Inkscape) Transparency is used (non-zero) for the text in Inkscape, but the package 'transparent.sty' is not loaded}%
    \renewcommand\transparent[1]{}%
  }%
  \providecommand\rotatebox[2]{#2}%
  \newcommand*\fsize{\dimexpr\f@size pt\relax}%
  \newcommand*\lineheight[1]{\fontsize{\fsize}{#1\fsize}\selectfont}%
  \ifx\svgwidth\undefined%
    \setlength{\unitlength}{158.64531389bp}%
    \ifx\svgscale\undefined%
      \relax%
    \else%
      \setlength{\unitlength}{\unitlength * \real{\svgscale}}%
    \fi%
  \else%
    \setlength{\unitlength}{\svgwidth}%
  \fi%
  \global\let\svgwidth\undefined%
  \global\let\svgscale\undefined%
  \makeatother%
  \begin{picture}(1,0.71471281)%
    \lineheight{1}%
    \setlength\tabcolsep{0pt}%
    \put(0,0){\includegraphics[width=\unitlength,page=1]{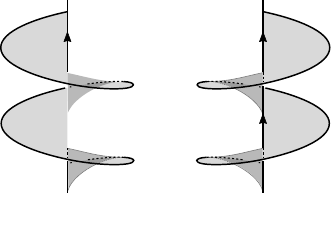}}%
    \put(0.16600881,0.01475452){\color[rgb]{0,0,0}\makebox(0,0)[lt]{\lineheight{1.25}\smash{\begin{tabular}[t]{l}$+$\end{tabular}}}}%
    \put(0,0){\includegraphics[width=\unitlength,page=2]{birkhoffsectionboundary.pdf}}%
    \put(0.75774131,0.0147537){\color[rgb]{0,0,0}\makebox(0,0)[lt]{\lineheight{1.25}\smash{\begin{tabular}[t]{l}$-$\end{tabular}}}}%
  \end{picture}%
\endgroup%

    \caption{A Birkhoff section near a positive/negative boundary component.} 
    \label{fig:birkhoffsectionboundary}
\end{figure}

We say that a partial Birkhoff section is \textbf{positive} if it only has positive boundary components.
We say that a pseudo-Anosov flow $\phi$ is \textbf{positive} if it admits a positive Birkhoff section.
We say that $\phi$ is \textbf{strictly positive} if it is positive and does not admit an almost transverse surface.

One defines the notions of \textbf{negative} and \textbf{strictly negative} symmetrically.
Building on this, we make the following definition.

\begin{defn}[Relatively positive/negative] \label{defn:relposflow}
Let $L$ be a link in an oriented closed 3-manifold $M$.
We say that a partial Birkhoff section is \textbf{positive} relative to $L$ if all of its negative boundary components lie along $L$, and \textbf{negative} relative to $L$ if all of its positive boundary components lie along $L$.
Note that we do not require that every component of $L$ be a boundary component of $M$.

We say that a pseudo-Anosov flow $\phi$ is \textbf{positive} relative to $L$ if $L$ is a collection of closed orbits of $\phi$ (under some choice of orientation) and $\phi$ admits a Birkhoff section that is positive relative to $L$.
We say that $\phi$ is \textbf{strictly positive} relative to $L$ if it is positive relative to $L$ and every almost transverse surface intersects $L$ transversely in at least one point.

One defines the notions of \textbf{negative} and \textbf{strictly negative} relative to $L$ symmetrically.
\end{defn}

For the rest of this section, we record two examples of relatively positive pseudo-Anosov flows. To that end, we recall the definition of the orbit space.

Given a pseudo-Anosov flow $\phi$ on $M$, let $\widetilde{\phi}$ be the lifted flow on the universal cover $\widetilde{M}$. By \cite[Proposition 4.1]{FM01}, the set of all flow lines of $\widetilde{\phi}$, equipped with the quotient topology, is a space $\mathcal{O}$ homeomorphic to the plane $\mathbb{R}^2$. 
We refer to $\mathcal{O}$ as the \textbf{orbit space} of $\phi$.
The orientation of $M$ induces a natural orientation on $\mathcal{O}$.
The lifted singular 2-dimensional foliations $\widetilde{\mathcal{F}^s}$ and $\widetilde{\mathcal{F}^u}$ induce singular 1-dimensional foliations $\mathcal{O}^s$ and $\mathcal{O}^u$ on $\mathcal{O}$.

\begin{eg}[Skew Anosov flow] \label{eg:skewanosovimpliesbas}
A pseudo-Anosov flow $\phi$ is a \textbf{positive skew} Anosov flow if its orbit space $\mathcal{O}$ is homeomorphic, via an orientation preserving homeomorphism, to a diagonal strip $\{(x,y) \mid -x < y < -x+1 \}$, where the foliations $\mathcal{O}^s$ and $\mathcal{O}^u$ correspond to the foliation by vertical and horizontal lines respectively.

If $\phi$ is a positive skew Anosov flow, then by \cite[Theorem A]{ABM24}, $\phi$ admits a positive Birkhoff section, thus $\phi$ is positive relative to any link. 

Moreover, such a $\phi$ cannot admit an almost transverse surface. Indeed, since there are no singular orbits, such a surface must be transverse, which is impossible in this case (see \cite[Proposition 4.9]{Bar95a}). 

Thus $\phi$ is in fact strictly positive relative to any link. 
\end{eg}

Given a pseudo-Anosov flow $\phi$, a \textbf{positive lozenge} is an orientation-preserving properly embedded copy of $[0,1]^2 \backslash \{(0,0), (1,1)\}$ in the orbit space $\mathcal{O}$, such that the restricted foliations $\mathcal{O}^s, \mathcal{O}^u$ are the foliations by vertical and horizontal lines respectively. 
Similarly, a \textbf{negative lozenge} is an orientation-preserving properly embedded copy of $[0,1]^2 \backslash \{(0,1), (1,0)\}$ in $\mathcal{O}$, such that the restricted foliations $\mathcal{O}^s, \mathcal{O}^u$ are by the foliations by vertical and horizontal lines respectively. 
The flow $\phi$ is said to have \textbf{no perfect fits} if there are no positive nor negative lozenges in $\mathcal{O}$.

Note that the usual definition of no perfect fits concerns perfect fit rectangles instead of lozenges. Nevertheless, the definition here is equivalent to the usual definition by \cite[Proposition 5.5]{Fen16}.

\begin{eg}[Pseudo-Anosov flows with no perfect fits] \label{eg:noperfectfitimpliesbas}
If $\phi$ has no perfect fits and is not the suspension of an Anosov diffeomorphism, then by \cite[Theorem 6.3]{Tsa24a}, $\phi$ admits a Birkhoff section whose boundary orbits are the singular orbits and one extra nonsingular orbit $\gamma$.
We deduce that if $\gamma$ is a positive (resp.~negative) boundary orbit, then $\phi$ is positive (resp.~negative) relative to $\sing(\phi)$.

Moreover, such a $\phi$ cannot admit an almost transverse surface that is disjoint from or tangent to the singular orbits. 
Indeed, as in \Cref{eg:skewanosovimpliesbas}, the intersection of $S$ with the stable foliation determines a nonsingular line field, thus $S$ is a torus.
But a pseudo-Anosov flow without perfect fits that is not the suspension of an Anosov diffeomorphism can only exist on atoroidal 3-manifolds (see, e.g., \cite[Chapter 5]{BM25}).
Thus $\phi$ is in fact strictly positive or negative relative to $\sing(\phi)$.

A closer inspection of the proof of \cite[Theorem 6.3]{Tsa24a} reveals that one can prescribe $\gamma$ to be positive or negative. Thus $\phi$ is actually both strictly positive and strictly negative relative to $\sing(\phi)$.
\end{eg}

\section{Reeb flow}

\subsection{Contact structures}

Throughout this paper, $Q$ will denote an oriented 3-manifold with boundary.

A 1-form $\alpha$ on $Q$ is positive contact if it satisfies $\alpha \wedge d\alpha > 0$.
A (positive) contact structure on $Q$ is a cooriented plane field $\xi$ given by the kernel of a (positive) contact 1-form. As we will only consider positive contact structures in this article, we will drop the qualifier positive from now on.

A contact vector field for a contact structure $\xi$ is a vector field $V$ that generates a flow preserving $\xi$.
The boundary $\partial Q$ is said to be convex if there exists a contact vector field $V$ transverse to it. 
From now on, unless otherwise stated, all contact structures on 3-manifolds with boundary will have convex boundary. This is always achievable by an arbitrarily small deformation.
In this case, $\partial Q$ can be divided as $R_+ \cup_\Gamma R_-$, where
\begin{itemize}
    \item $R_+$ is the subset of $\partial Q$ where $V$ is positively transverse to $\xi$, 
    \item $R_-$ is the subset of $\partial Q$ where $V$ is negatively transverse to $\xi$, and 
    \item $\Gamma$ is the subset of $\partial Q$ on which $V$ lies in $\xi$. 
\end{itemize}
It can be shown that the dividing set $\Gamma$ is a multicurve whose isotopy class only depends on the contact structure $\xi$ and not on the contact vector field $V$.

The Reeb vector field of a contact 1-form $\alpha$ is the unique vector field $R_\alpha$ satisfying $\alpha(R_\alpha) = 1$ and $d\alpha(R_\alpha,\cdot) = 0$. The Reeb flow of $\alpha$ is the flow generated by $R_\alpha$.

See, for instance, \cite{Mas14} for a more detailed introduction to contact structures and convex surfaces.

\begin{defn}[{\cite[Definition 2.8]{CGHH11}}]\label{def:adaptedcontactform}
Let $(Q,\xi)$ be a contact manifold with convex boundary and $\Gamma$ the dividing set on $\partial Q$. The triple $(Q,\Gamma,\xi)$ is called a \textbf{sutured contact manifold} if $(Q,\Gamma)$ is a sutured manifold, and there exists a contact form $\lambda$ for $\xi$ such that the following conditions hold:
    \begin{enumerate}
        \item The Reeb vector field $R_\lambda$ is positively transverse to $R_+$ and negatively transverse to $R_-$.
        \item $\lambda = Cdt + \beta$ in $N(\Gamma)$, where $\beta$ is independent of $t$. In particular, $R_\lambda = 1/C \partial_t$ in $N(\Gamma)$.
    \end{enumerate}
    Such a form $\lambda$ is said to be \textbf{adapted} to $(Q,\Gamma,\xi)$.
\end{defn}

A closed orbit of a flow is \textbf{nondegenerate} if the linearized first return map along this orbit does not have 1 as an eigenvalue. We say that a flow is nondegenerate if all of its closed orbits are nondegenerate. A contact structure is \textbf{hypertight} if it admits a defining 1-form whose Reeb flow has no nullhomotopic orbits. We also call such a defining 1-form a hypertight contact form.

Let $\xi_{ot}$ be the contact structure on $D^2 \times[0,\varepsilon]$ defined as $\ker(\cos(r\pi) dz + \sin(r\pi) d\theta)$. We say that a contact structure $(Q,\xi)$ is \textbf{tight} if there is no contact embedding $(D^2, \xi_{ot})\hookrightarrow (Q,\xi)$. Hofer proved that any hypertight contact structure on a closed contact 3-manifold is tight \cite[Theorem 1]{Hof93}. The proof works just as well for sutured contact 3-manifolds.

Let $\xi_{gt}$ be the contact structure on $T^2\times\mathbb{R}$ defined as $\ker(\cos(z)dx + \sin(z)dy)$. For any non-negative $n\in \mathbb R$, we say that a contact structure $(M,\xi)$ \textbf{contains $2\pi n$ Giroux torsion} if there is a contact embedding of $(T^2\times[0,2n\pi],\xi_{gt}) \hookrightarrow (Q,\xi)$, and we call the image of that embedding a \textbf{$2\pi n$ Giroux torsion domain}. We say that a contact structure contains Giroux torsion if it contains $2\pi$ Giroux torsion.

\begin{thm}[\cite{CGH09}]\label{thm:finiteness_contact_structure}
Let $Q$ be a compact 3-manifold, possibly with boundary and $\xi$ the germ of a contact structure along $\partial Q$. Then up to the action of diffeomorphisms isotopic to the identity rel boundary and Dehn twists along tori in the interior of $Q$, for any fixed $n$, there are a finite number of tight contact structures which are equal to $\xi$ near $\partial Q$ and which do not contain $2\pi n$ Giroux torsion.
\end{thm}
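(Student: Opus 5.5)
This statement is the Colin--Giroux--Honda finiteness theorem \cite{CGH09}. The paper only quotes it, but here is how I would prove it. The idea is to cut $Q$ along a fixed sutured hierarchy and count the possible convex-surface data at each stage.

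\textbf{Reduction and fixed hierarchy.} First I would reduce to the case where $Q$ is irreducible with incompressible boundary. Tight structures on a connected sum decompose by Colin's gluing along convex spheres with one dividing curve. Compressing disks can be made convex with Legendrian boundary, and the Thurston--Bennequin bound leaves finitely many dividing-curve choices on each disk.

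Next I would fix, once and for all, a Haken hierarchy $Q=Q_0\supset Q_1\supset\cdots\supset Q_k$, ending in a union of balls. The cutting surfaces $S_i$ should be chosen compatibly with the JSJ tori, so that every torus in the interior where twisting can accumulate is a JSJ torus or a vertical torus.

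Given a tight $\xi$ with fixed boundary germ, I would isotope each $S_i$ (rel boundary) to be convex with Legendrian boundary, inductively. At the last stage, Eliashberg's uniqueness of tight structures on $B^3$ with fixed boundary characteristic foliation shows that $\xi$ is determined up to isotopy by the dividing sets $\Gamma_{S_i}$, together with the finitely many choices of Legendrian boundary data. So it is enough to show that, up to isotopy and Dehn twists along interior tori, only finitely many tuples $(\Gamma_{S_1},\dots,\Gamma_{S_k})$ arise from tight structures without $2\pi n$ Giroux torsion.

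\textbf{Bounding the dividing sets.} There are two parts.

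\begin{enumerate}
\item \emph{Number of curves.} This is controlled by the Legendrian boundary data: the twisting number along $\partial S_i$ is bounded by Kanda/Bennequin inequalities, and a bypass-attachment argument removes any $\partial$-parallel excess. This gives finitely many possibilities modulo the mapping class of $S_i$ rel boundary.
\item \emph{Isotopy class on the surface.} I would try to realize this by a bypass sequence argument. If two convex copies of $S_i$ differ by a large power of a mapping class, the region between them is a product $S_i\times I$ carrying tight structures of large ``twisting''.
\begin{itemize}
\item If the twisting is supported along an annulus or torus, one extracts a $T^2\times I$ layer with many half-turns. The hypothesis of no $2\pi n$ torsion bounds this layer, except for the part absorbed by Dehn twists along interior tori, which are modded out.
\item If the mapping class has a pseudo-Anosov component, I would use that $Q$ is Haken. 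Then a convex surface isotopic to $S_i$ inside a compact manifold can only be pushed a bounded number of times before meeting itself, which gives a uniform bound.
\end{itemize}
\end{enumerate}

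\textbf{Main obstacle.} I expect the hardest step to be the second bound, namely uniformly controlling the dividing set $\Gamma_{S_i}$ up to the allowed Dehn twists. It requires converting unbounded twisting along a surface into either Giroux torsion (forbidden) or overtwistedness (forbidden), and checking that the torsion found genuinely embeds in $Q$ rather than only in a cover. I would first attack it by reducing, via the JSJ decomposition, to atoroidal pieces plus $T^2\times I$ neighborhoods of JSJ tori, where Giroux's classification of tight structures on $T^2\times I$ makes the torsion count explicit.
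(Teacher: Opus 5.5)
The paper does not prove this statement; it quotes it from \cite{CGH09}. Your outline has a genuine gap, located where you expect it, and two supporting steps are also unsound.

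\textbf{The hierarchy does not exist in general.} After reducing to $Q$ irreducible with incompressible boundary, you cannot in general fix a Haken hierarchy. Closed irreducible manifolds such as lens spaces, small Seifert fibered spaces and non-Haken hyperbolic manifolds have none, and the statement covers them. In the paper's application $Q=M^\circ$ has nonempty torus boundary and is Haken, so there this is only a scope problem. It does, however, rule out your route as a proof of the general theorem.

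\textbf{Step 1 uses the inequalities in the wrong direction.} The Bennequin/Kanda inequalities bound the twisting of $\partial S_i$ from \emph{above}, i.e.\ the number of endpoints of $\Gamma_{S_i}$ from below. You need an upper bound on the number of endpoints, and no such inequality provides one. What actually fixes the twisting of $\partial S_i$ is how $\partial S_i$ crosses the dividing sets already placed on $S_1,\dots,S_{i-1}$. So step 1 at stage $i$ presupposes step 2 at all earlier stages. Moreover, removing $\partial$-parallel arcs bounds neither the multiplicities of parallel essential arcs nor the number of closed dividing curves.

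\textbf{Step 2 has no mechanism; this is the decisive gap.} Two convex copies of $S_i$ whose dividing sets differ by a large mapping class arise from \emph{different} contact structures. There is therefore no product region between them inside a single contact manifold on which to run a twisting or Giroux-torsion argument. The claim that a convex surface ``can only be pushed a bounded number of times before meeting itself'' also has no content, since isotopies of $S_i$ are unconstrained.

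What is missing is a way, inside one tight $\xi$, to turn unbounded complexity of the convex data into many disjoint, pairwise parallel essential tori across which the contact planes rotate. Roughly, \cite{CGH09} do this with a fixed triangulation (which exists on every compact $3$-manifold) and normal surface theory:
\begin{enumerate}
\item Kneser--Haken finiteness bounds the number of disjoint, pairwise non-parallel normal surfaces, so large complexity forces normally parallel copies, and the product regions between them carry the rotation.
\item Compressible tori of this kind yield overtwisted disks.
\item Incompressible ones yield Giroux torsion, which is capped by $n$.
\item The residual twisting is absorbed by Dehn twists.
\end{enumerate}
Your fallback of reducing along JSJ tori does not supply this step. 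Finiteness on the atoroidal pieces with fixed boundary germ is itself the main content of the theorem.
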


When building a Reeb flow from our BAS pseudo-Anosov flows, we will pass by a generalization of contact structure:
A \textbf{stable Hamiltonian structure} on $Q$ is a pair $(\omega,\lambda)$ where $\omega$ is a closed, nowhere vanishing 2-form and $\lambda$ is a 1-form such that 
\begin{itemize}
    \item $d\lambda = f\omega$ for some smooth function $f$
    \item $\omega \wedge \lambda > 0$ everywhere.
\end{itemize}
If $f>0$ everywhere, then $\lambda$ is a contact 1-form. For more background on stable Hamiltonian structures, see \cite{CV15}.

\subsection{Giroux torsion, cylindrical contact homology and homotopy classes of periodic orbits}\label{sec:contacthomology}
Cylindrical contact homology is a chain complex associated with a hypertight contact form. It is constructed in \cite{HN16} for closed 3-manifolds, but the construction works as well for sutured contact 3-manifolds admitting hypertight adapted contact forms \cite[Remark 1.6]{HN16}. Given a contact 3-manifold $(Q,\xi)$, a nondegenerate, hypertight contact form $\alpha$, and a free homotopy class $h$, we define $CC(Q,\alpha,h)$ to be the $\mathbb Q$ vector space freely generated by (possibly multiply covered) closed orbits of $R_\alpha$ in homotopy class $h$. There is a differential $\partial\colon CC(Q,\alpha,h)\to CC(Q,\alpha,h)$ which counts index 1 holomorphic cylinders in the symplectization of $Q$. This differential satisfies $\partial^2 =0$, making $CC(Q,\alpha,h)$ into a chain complex. We denote the homology of $CC(Q,\alpha,h)$ by $HC(Q,\alpha,h)$. We will only need the following formal properties of $CC$:
\begin{enumerate}
    \item 
    
    $CC$ is $\mathbb Z/2$ graded by Lefschetz index of closed orbits. The differential decreases this index by 1. When $h$ contains finitely many orbits of the flow, it follows that $$\chi(HC(Q,\alpha,h)):=rk(HC_{even}(Q,\alpha,h))-rk(HC_{odd}(Q,\alpha,h))$$ is the sum of the Lefschetz indices of closed orbits in homotopy class $h$. If $h$ contains infinitely many closed orbits of $R_\alpha$ and $\chi(HC(Q,\alpha,h))$ is finite, then $h$ contains infinitely many orbits of Lefschetz index 1 and of index -1.
    \item $HC$ depends only on $(Q,\xi)$, not on the choice of hypertight, nondegenerate $\alpha$.
    \item If $HC(Q,\alpha,h)\ncong 0$, then $R_\alpha$ has a closed orbit in the free homotopy class $h$.
\end{enumerate}

In \cite{ChaPan25}, Chaidez--Pan define a generalization of cylindrical contact homology, denoted by $CH$, to contact forms which are not necessarily hypertight. They say that a sutured contact 3-manifold is \textbf{algebraically tight} if its full contact homology vanishes. We will not need to work directly with this definition; we will need only the fact that a hypertight contact structure is algebraically tight \cite[Lemma 4.17]{ChaPan25}.

It turns out that the cylindrical contact homology can also detect the (lack of) Giroux torsion. More precisely, we will use the following consequence of a result of Chaidez--Pan:
\begin{prop}[Chaidez--Pan \cite{ChaPan25}] \label{prop_Girouxtorsion_implies_lotsorbits}
    Suppose $(Q,\Gamma, \xi)$ is a sutured contact manifold, with $\xi$ algebraically tight, $\alpha$ a contact form for $\xi$, and let $T$ be the boundary of a $2\pi n$ Giroux torsion domain, with $n\geq 1$, then for each homotopy class $\gamma \in \pi_1(T)$, either $\gamma$ or $\gamma^{-1}$ is represented by a closed orbit of the Reeb flow of $\alpha$.
\end{prop}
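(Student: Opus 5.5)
The plan is to model the Giroux torsion domain by an explicit contact form, read off a homotopy class whose cylindrical contact homology has nonzero Euler characteristic, and transport that to $\alpha$ using invariance. We reduce to the Chaidez--Pan machinery. Let $D \cong T^2 \times [0,2\pi n]$ be the Giroux torsion domain, with $n \ge 1$, and $T = T^2 \times \{0\}$. Fix a primitive class $\gamma \in \pi_1(T) \cong \mathbb{Z}^2$; non-primitive classes follow by taking multiples of orbits, since if $\gamma_0^k = \gamma$ then iterating an orbit in class $\gamma_0^{\pm1}$ gives $\gamma^{\pm 1}$.

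Since $n \ge 1$, the contact planes of $\xi_{gt}$ make at least one full rotation along the $z$-direction. Hence there is a level $z_0$ where the characteristic linear foliation on $T^2 \times \{z_0\}$ has slope exactly $\gamma$, possibly with orientation reversed. Near this level we modify the form to a Morse--Bott model. We take a contact form $\alpha_D = f(z)\,dx + g(z)\,dy$ on $D$, where $(f,g)$ winds around the origin at least once. Its Reeb vector field is tangent to the tori $T^2 \times \{z\}$, and its direction is proportional to $(g'(z), -f'(z))$. As $z$ runs over $[0,2\pi n]$, this direction sweeps out a full turn. Consequently some torus $T^2 \times \{z_1\}$ is foliated by closed Reeb orbits in class $\gamma$ or $\gamma^{-1}$. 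After a Morse--Bott perturbation, this torus yields one elliptic and one hyperbolic orbit. The sweep passes through the slope of $\gamma$ twice, once in each orientation, so the orbits come in both $\gamma$ and $\gamma^{-1}$.

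The key step is to show that, in the cylindrical contact homology of $(Q,\xi)$ in the class $\gamma$ or $\gamma^{-1}$, these orbits survive, as in Chaidez--Pan's argument. A bare Morse--Bott pair cancels in Euler characteristic, so the Euler characteristic gives nothing, and we must argue that the differential between the elliptic and hyperbolic orbits vanishes. Following Chaidez--Pan, the two index-$1$ cylinders connecting them inside the torus family cancel in pairs. By neck-stretching along $\partial D$, all other holomorphic curves with these ends are confined to $D$. Confinement uses algebraic tightness, which rules out planes bubbling off, so the relevant chain complex is well-defined. Thus $HC(Q,\alpha',\gamma^{\pm1}) \ne 0$ for the modified form $\alpha'$.

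By invariance, property (2) in Chaidez--Pan's generalized setting, the same homology class is nonzero for any contact form defining $\xi$, in particular for $\alpha$. Then property (3) produces a closed Reeb orbit of $\alpha$ in class $\gamma$ or $\gamma^{-1}$.

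The main obstacle is the survival step. It requires controlling holomorphic cylinders leaving $D$, and using the generalized $CH$ of \cite{ChaPan25} rather than hypertight $HC$, since $\alpha$ need not be hypertight. I would simply quote the corresponding theorem of \cite{ChaPan25} for that step.
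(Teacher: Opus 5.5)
Once you quote \cite{ChaPan25} for the survival step, your proof is the paper's proof. The paper's argument consists solely of citing Theorem~4.27 and Lemma~4.19 of \cite{ChaPan25}, so both proofs rest on the same external input. The one point the paper checks that you omit is that these results hold for \emph{sutured} contact manifolds (Remark~9 of \cite{ChaPan25}). Since $Q$ has boundary, your invariance and existence steps must be carried out with contact forms adapted to $(Q,\Gamma)$; this is what keeps holomorphic curves away from $\partial Q$.

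Do not present your heuristic for the survival step as an argument, because it does not work as written. Nothing in it excludes index-$1$ cylinders joining $e$ or $h$ to Reeb orbits of $\alpha'$ outside $D$ that are freely homotopic to them in $Q$. Those are exactly the curves that could kill the Morse--Bott pair. ``Neck-stretching along $\partial D$'' does not rule them out, and it is not even clear what that operation means here without altering $\xi$. In arguments of this type, the twisting inside $D$ is what should confine curves. One mechanism is positivity of intersections with the $\mathbb{R}$-invariant hypersurfaces $\mathbb{R}\times T^2\times\{z\}$ lying over intermediate tori foliated by Reeb orbits. This is where $n\ge 1$ would be expected to enter. In your sketch, $n\ge 1$ is used only to find a torus of Reeb slope $\pm\gamma$, and a twist of $\pi$ already provides that. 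So your sketch never uses the full $2\pi$ twist, which signals that the essential input is missing.

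A smaller point: the model form must be $\rho(z)(\cos z\,dx+\sin z\,dy)$ with $\rho>0$, not an arbitrary winding pair $(f,g)$. Otherwise $\alpha'$ does not define $\xi$, and invariance cannot be applied.
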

\begin{proof}
    The result follows directly from Theorem 4.27 and Lemma 4.19 of \cite{ChaPan25}, both of which work in the setting of contact sutured manifolds (see Remark 9 of \cite{ChaPan25}).
\end{proof}

\section{From pseudo-Anosov flow to Reeb flow} \label{sec:patoreeb}

The key to proving the main theorem is the following proposition, which transplants the dynamics of a pseudo-Anosov flow into the dynamics of a Reeb flow.

\begin{prop} \label{prop:patoreeb}
Let $\phi$ be a pseudo-Anosov flow on an oriented closed 3-manifold $M$ that is strictly positive with respect to a link $L$.
For every choice of essential multicurve $\Gamma$ on the boundary of $M^\circ := M \backslash \nu(L)$ 
with slope different from the degeneracy slope, 
there is a contact structure $\xi$ on $M^\circ$ with a contact 1-form $\alpha$ whose Reeb flow $X$ satisfies the following properties: 
\begin{enumerate}[label=(\arabic*)]
    \item\label{item_patoreeb_nondegenerate} $X$ is nondegenerate.
    \item\label{item_patoreeb_nonullhomotopic} Each closed orbit of $X$ is homotopically essential in $M^\circ$.
    In fact each closed obit $\gamma$ of $X$ is freely homotopic to a closed orbit of $\phi$ in $M$, hence also homotopically essential in $M$, except if $\gamma$ is homotopic to a meridian around some element of $L$. 
    \item \label{item_patoreeb_inacone} For each connected component $T$ of $\partial M^\circ$, the peripheral orbits of $X$ stay in a proper cone of $H_1(T)$.
    \item\label{item_patoreeb_samespectra} $\mathcal{P}^\circ(X)=\mathcal{P}^\circ(\phi^{\circ})$.
    \item\label{item_patoreeb_lefschetz} Moreover, for every $g \in \mathcal{P}^\circ(X)$, the sum of Lefschetz indices of closed orbits of $X$ with free homotopy class $g$ is well-defined (possibly $-\infty$) and nonzero. 
    \item\label{item_patoreeb_adapted} $\alpha$ is adapted to the sutured manifold $(M^\circ, \Gamma)$ in the sense of \Cref{def:adaptedcontactform}.
    \item\label{item_patoreeb_stdform} In a tubular neighbourhood of $\partial M^\circ$, $\xi$ has a standard form depending only on $\Gamma$.
    \item\label{item_patoreeb_nogirouxtorsion} $\xi$ has no Giroux torsion. 
\end{enumerate}
\end{prop}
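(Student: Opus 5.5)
We build $\alpha$ by turning the flow into a stable Hamiltonian structure and then into a contact form, using the Birkhoff section $S$ that is positive relative to $L$. First we pass to $\psi := \phi^{\circ L}_{\bullet \mathcal{C}}$, where $\mathcal{C}$ is the set of boundary orbits of $S$ not in $L$ together with $\sing(\phi)\setminus L$. Solid blowing up the singular and positive boundary orbits replaces them by elliptic cores and positive hyperbolic boundary orbits, and makes the flow smooth. After this, $S$ becomes a genuine partial section: its interior is transverse to $\psi$, its positive boundary components are spun around the elliptic cores, and its negative boundary components lie on $\partial M^\circ$. By \Cref{lem:blowupnonperispec}, $\psi$ already satisfies \ref{item_patoreeb_nonullhomotopic}, \ref{item_patoreeb_samespectra} and \ref{item_patoreeb_lefschetz}. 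The aim is therefore to realize $\psi$, up to isotopy equivalence away from a collar of $\partial M^\circ$, as a Reeb flow.

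To do this we use a Fried/Thurston-type argument. We take a closed 2-form $\omega$ positive on the flow transverse planes. Its class is chosen by pushing the Poincaré dual of $S$ off, and it is made positive on the interior by averaging over the first return map of the section. We then take a 1-form $\lambda$ with $\lambda(X)>0$ and $d\lambda = f\omega$. Positivity of $\omega \wedge \lambda$ is immediate. The key point is arranging $f>0$, that is, $d\lambda$ positive on the plane field. Following \cite{Zun25} near positive boundary orbits, the contact condition can be achieved exactly where the section spirals positively, and this is guaranteed by the blow-up functions $s(r)\in(-1,0)$ on elliptic cores.

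Near the negative boundary components along $L$ this fails. This is why we drill $\nu(L)$ and modify the flow in a collar $T^2\times[0,1]$ of each boundary torus. There we interpolate to a standard sutured contact form whose Reeb field is $\partial_t$ near the suture $\Gamma$ and transverse to $R_\pm$. This is possible for any $\Gamma$ whose slope differs from the degeneracy slope: the characteristic foliation can be rotated from the degeneracy direction to $\Gamma$ by a monotone twist, less than a half turn, in the appropriate direction. This gives \ref{item_patoreeb_adapted} and \ref{item_patoreeb_stdform}. The new peripheral orbits in the collar have slopes in the interval swept by the twist, hence lie in a proper cone, giving \ref{item_patoreeb_inacone}. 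They are also peripheral or meridional, so \ref{item_patoreeb_samespectra} is unaffected. A final $C^\infty$-small perturbation supported away from the existing hyperbolic and elliptic orbits gives nondegeneracy \ref{item_patoreeb_nondegenerate}; these orbits are already nondegenerate by construction.

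The main obstacle is the cohomological one, together with \ref{item_patoreeb_nogirouxtorsion}. A global contact form of the above type exists only if no closed surface $\Sigma$ in $M^\circ$ has $\int_\Sigma \omega$ forced nonpositive, in the spirit of Fried's criterion. This is where strict positivity enters. If $\omega$ could not be chosen exact near the interpolating collars, we would obtain a surface in $M^\circ$ on which $\omega$ integrates trivially. By a homology-direction argument we could then make it almost transverse to $\phi$ and disjoint from $L$, contradicting the hypothesis. For \ref{item_patoreeb_nogirouxtorsion}, we first note that $\xi$ is hypertight by \ref{item_patoreeb_nonullhomotopic}, hence algebraically tight. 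If $T$ bounded a Giroux torsion domain, then \Cref{prop_Girouxtorsion_implies_lotsorbits} would force Reeb orbits in every primitive class of $\pi_1(T)$ up to sign. If $T$ is peripheral this contradicts \ref{item_patoreeb_inacone}. Otherwise $T$ is an essential torus such that all classes in $\pi_1(T)$ up to sign are periodic orbits of $\phi$. Such a torus can be isotoped to be almost transverse (a scalloped/Birkhoff torus argument), and it is disjoint from $L$, again contradicting strict positivity.
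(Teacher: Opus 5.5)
Your overall architecture matches the paper's: Zung's stable Hamiltonian structure from the relatively positive Birkhoff section, strict positivity entering through homology directions and Mosher's almost transverse surfaces, a model collar realizing the suture $\Gamma$, a generic perturbation, and Chaidez--Pan for Giroux torsion. But the Giroux torsion step has a genuine hole.

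\textbf{The Giroux torsion step.} Your dichotomy is ``$T$ is peripheral, or else every class of $\pi_1(T)$ is, up to sign, an orbit of $\phi$.'' It ignores the meridian exception in (2).

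\emph{The missing case.} Tightness only makes $T$ incompressible in $M^\circ$. $T$ can still compress in $M$ without being boundary-parallel in $M^\circ$, for instance by enclosing components of $L$. Then $\ker(\pi_1(T)\to\pi_1(M))$ is nontrivial. \Cref{prop_Girouxtorsion_implies_lotsorbits} together with (2) only tells you that these kernel classes are Reeb orbits homotopic to meridians of $L$. They are not orbits of $\phi$, so your assertion fails and neither of your contradictions applies.

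\emph{How the paper closes it.} Such a kernel element lies in a peripheral subgroup $P$ of $\pi_1(M^\circ)$.
If $\pi_1(T)\cap P$ has rank one, the JSJ decomposition puts $T$ and that boundary torus in a common Seifert piece, with the intersection generated by the fiber. So the meridian is the fiber slope, and refilling that single component of $L$ gives a reducible manifold, a contradiction.
If the rank is two, then (using $M^\circ\neq T^2\times I$) $T$ is boundary-parallel, and (3) gives the contradiction.

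\emph{The $\pi_1$-injective case.} Your step ``isotope $T$ to be almost transverse and disjoint from $L$'' is unsupported. Tori whose fundamental group carries periodic-orbit classes are Birkhoff-type rather than transverse, and an isotopy in $M$ need not avoid $L$. Strict positivity is not the tool here. The situation is simply impossible: no $\mathbb{Z}^2\le\pi_1(M)$ can have all its elements, up to sign, freely homotopic to closed orbits of $\phi$ \cite[Corollary 4.2.4]{BM25}.

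\textbf{The contact step.} This is right in spirit, but the mechanism is misstated.

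\emph{Why $f>0$ is not the point.} Zung's construction only gives $d\lambda=f\omega$ with $f\ge 0$, and $f=0$ wherever $\lambda=dt$, i.e.\ away from the positive boundary blow-up regions. Positivity there does not propagate by itself.

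\emph{What makes it global.} The missing idea is diffusion. If $\omega=d\beta$, with $\beta$ cylindrically symmetric near $\partial M^\circ$, then $\alpha_1=\lambda+\epsilon\beta$ satisfies $d\alpha_1=(f+\epsilon)\omega$ and has the same Reeb direction. So you need $[\omega]=0$ in $H^2(M^\circ;\mathbb{R})\cong H_1(M^\circ,\partial M^\circ;\mathbb{R})$, not a class dual to $S$. Also, $\omega$ should come from an area form preserved by a suitably deformed first return map; averaging over a $\mathbb{Z}$-action does not produce one.

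\emph{Where strict positivity enters.} Blowing up extra orbits and choosing their area forms shifts $[\omega]$ by positive combinations of orbit classes. So it suffices that $0$ lie in the relative interior of $\cone_1(\phi^\circ)$. The obstruction is a class in $H_2(M;\mathbb{R})$ that pairs nonnegatively with every orbit and vanishes on $\cone_1(L)$. Every admissible $\omega$ integrates \emph{positively} on it, not ``trivially'' as you wrote. Mosher realizes this class by an almost transverse surface with zero algebraic intersection with $L$, hence with no transverse intersection point, contradicting strict positivity.

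\textbf{Nondegeneracy.} The degenerate tori of periodic orbits inside the blow-up regions and the collars must themselves be perturbed. So the perturbation should be supported inside these invariant regions. There, transversality to the $z$-fibration, respectively to two fibrations whose positive half-planes meet in a proper cone, controls the new orbits. This is what preserves the Lefschetz sums in (5) and the cone property in (3).
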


This section is devoted to the proof of \Cref{prop:patoreeb}. Here is an outline: 
By assumption of relative positivity, $\phi$ admits a Birkhoff section $S$ all of whose negative boundary components lie along $L$. Through $S$, we can apply the main construction of \cite{Zun25} to construct a stable Hamiltonian structure $(\omega,\lambda)$ whose Reeb flow $X_1$ is a solid blow-up of the boundary blow-up $\phi^\circ$. 
The fact that $S$ only has positive boundary components in the interior of $M^\circ$ translates to the fact that $\lambda \wedge d \lambda \geq 0$, with strict inequality near the positive boundary components.

The next step is to `diffuse' this positivity to the rest of the manifold. This can be done by adding a small multiple of a primitive of $\omega$ to $\lambda$. For this to be possible, one has to arrange for $\omega$ to be exact in the previous step. In turn, this relies on a homology argument, which we explain in \Cref{subsec:homologycone}. 
The fact that $\phi$ is relatively \emph{strictly} positive is used here.
After this diffusion operation, we now have a contact 1-form $\alpha_1$ with the same Reeb flow $X_1$ (up to reparametrization).

At this point, the remaining items to arrange for are
\ref{item_patoreeb_adapted}, concerning the adaptiveness to the sutured contact structure, 
\ref{item_patoreeb_stdform}, concerning the standard form at the boundary,
\ref{item_patoreeb_nondegenerate}, concerning the nondegeneracy of the Reeb flow, and
\ref{item_patoreeb_nogirouxtorsion}, concerning the lack of Giroux torsion.
We will arrange for the first three by modifying the contact form near the boundary of $M^\circ$, then perturbing inside the blow-up regions.
Finally, we will argue for the last item using an argument involving \Cref{prop_Girouxtorsion_implies_lotsorbits} and JSJ decomposition.

\subsection{Homology cone lemma} \label{subsec:homologycone}

Let $\psi$ be a flow on an oriented closed 3-manifold.
Following \cite{Fri82}, we define the \textbf{cone of homology directions} of $\psi$ to be the set $\cone_1(\psi) \subset H_1(M; \mathbb{R})$ generated as a cone by the homology classes of (oriented) closed orbits of $\psi$.

For the proof of \Cref{prop:patoreeb}, we need a lemma concerning the positioning of $L$ within $\cone_1(\phi)$.
To state it, let us define $\cone_1(L) \subset H_1(M; \mathbb{R})$ to be the cone generated by the homology classes of closed orbits in $L$.

\begin{prop} \label{prop:singhomologycone}
Let $\phi$ be a pseudo-Anosov flow on an oriented closed 3-manifold $M$ where every almost transverse surface intersects $L$ transversely in at least one point.
Then $\cone_1(L)$ contains a point in the relative interior of $\cone_1(\phi)$.
\end{prop}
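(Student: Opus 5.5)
We argue by duality via the Hahn--Banach (separation) theorem. Let $C = \cone_1(\phi) \subset H_1(M;\mathbb{R})$, a closed convex cone (a pseudo-Anosov flow has a Markov partition, so by Fried \cite{Fri82} $C$ is finitely generated by finitely many closed orbits, hence closed and polyhedral). Since $L$ consists of closed orbits of $\phi$, we have $\cone_1(L) \subset C$. Suppose, for contradiction, that $\cone_1(L)$ contains no point of the relative interior of $C$. Then the convex set $\cone_1(L)$ lies in the relative boundary of $C$. Since $C$ is polyhedral, $\cone_1(L)$ must then lie in a single proper face $F$ of $C$: the sum of the generators of $\cone_1(L)$ lies in the relative interior of some face, and that face contains all the generators. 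Choose a supporting linear functional $u \in H^1(M;\mathbb{R})$ with $u \ge 0$ on $C$, $u = 0$ exactly on $F$, and $u \neq 0$ on $C$, which is possible because $F$ is a proper face. By rationality of the polyhedral cone we may take $u \in H^1(M;\mathbb{Q})$, and after scaling, $u \in H^1(M;\mathbb{Z}) \cong H_2(M;\mathbb{Z})$.

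The key input is then a Fried/Mosher-type realization theorem. A class $u \in H^1(M)$ which is nonnegative on every closed orbit of $\phi$ is Poincaré dual to an almost transverse surface $S$, almost transverse in the sense of Mosher, meaning transverse after possibly blowing up singular orbits; in the convention of this paper, positively transverse away from $\sing(\phi)$. This is Mosher's theorem for pseudo-Anosov flows: classes in the dual cone $\cone_1(\phi)^\vee$ are represented by almost transverse surfaces, with strictly positive classes giving cross sections as in Fried \cite{Fri82}. For boundary classes of the dual cone one obtains surfaces that may contain the closed orbits on which $u$ vanishes, or be tangent to them, after the dynamic blow-up. I would invoke this result, \cite{Mos92a} together with its refinements in \cite{BM25}, as a black box, checking that the surface produced is almost transverse in the sense of the figure (Fig.~\ref{fig:almosttransverse}). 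The surface is nonempty because $u \neq 0$.

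Now, since $u$ vanishes on each component of $L$, the algebraic intersection of $S$ with each component $\ell$ of $L$ is $u([\ell]) = 0$. Any transverse intersection of an almost transverse surface with a closed orbit away from its singular part is positive, because the surface is positively transverse to the flow there. So the algebraic intersection number with $\ell$ counts positive points only, and $S$ must have no transverse intersection point with $L$. This contradicts the hypothesis that every almost transverse surface meets $L$ transversely in at least one point, which proves the proposition.

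The main obstacle is the middle step: making sure the surface dual to a boundary class of the dual cone can be chosen almost transverse in exactly the sense used here. This means handling orbits in $F$ that the surface must contain or be tangent to. In particular, an orbit of $L$ that is nonsingular with $u=0$ must be avoided rather than crossed. I would first try Mosher's dynamic blow-up construction, and then collapse back, arguing that orbits with $u=0$ can be isotoped off the surface or made tangent only along singular orbits. A secondary technical point is the reduction from "no point of $\cone_1(L)$ in the relative interior" to "contained in one proper face", which uses polyhedrality of $C$.
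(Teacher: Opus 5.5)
Your proposal is correct and follows essentially the paper's argument. The paper likewise takes a nonzero class that is nonnegative on $\cone_1(\phi)$ and vanishes on $\cone_1(L)$, realizes it by an almost transverse surface $S$ via Mosher's theorem, and uses $\langle S,\gamma\rangle = 0$ for $\gamma \subset L$ to conclude that $S$ is disjoint from or tangent to each orbit of $L$.

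Your choice of a rational (hence, after scaling, integral) class fills in a detail the paper passes over, since a surface needs an integral class. The ``main obstacle'' you flag is not needed: $S$ is positively transverse to $\phi$ away from $\sing(\phi)$, so zero algebraic intersection already forces $S$ to miss every nonsingular orbit of $L$, with no isotopy required.
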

\begin{proof}
Suppose otherwise, then $\cone_1(L)$ is contained in a lower dimensional face of $\cone_1(\phi)$, thus there exists $\alpha \in H_2(M;\mathbb{R}) \backslash \{0\}$ that has nonnegative pairing with every element in $\cone_1(\phi)$ and zero pairing with every element in $\cone_1(L)$.
By \cite[Theorem 1.3.2]{Mos92b}, there exists a surface $S$ almost transverse to $\phi$ and with homology class $\alpha$.

For every orbit $\gamma \subset L$, we have $\langle S, \gamma \rangle = \langle \alpha, [\gamma] \rangle = 0$. Thus $S$ is either disjoint or tangent to each orbit in $L$. 
Contradiction. 
\end{proof}

Generalizing the definition above, for a flow $\psi$ on an oriented 3-manifold with boundary $Q$, we define $\cone_1(\psi) \subset H_1(Q,\partial Q; \mathbb{R})$ to be the cone generated by the homology classes of closed orbits of $\psi$.

\begin{cor} \label{cor:homologycone}
Let $\phi$ be a pseudo-Anosov flow on an oriented closed 3-manifold $M$ where every almost transverse surface intersects $L$ transversely in at least one point.
Let $\phi^\circ$ be a boundary blow-up of $\phi$ at $L$. 
Then $0 \in H_1(M^\circ,\partial M^\circ; \mathbb{R})$ lies in the relative interior of $\cone_1(\phi^\circ)$.
\end{cor}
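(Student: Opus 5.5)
The plan is to push \Cref{prop:singhomologycone} through the natural map $j\colon H_1(M;\mathbb{R}) \to H_1(M,\nu(L);\mathbb{R}) \cong H_1(M^\circ,\partial M^\circ;\mathbb{R})$. Excision identifies the target with $H_1(M^\circ,\partial M^\circ;\mathbb{R})$. The map $j$ is surjective, since the next term of the long exact sequence is $H_0(\nu(L)) \to H_0(M)$, which is injective on the relevant part, so the connecting map vanishes. Its kernel is spanned by the classes of the components of $L$.

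First I compare the two cones. By construction of the boundary blow-up, the closed orbits of $\phi^\circ$ fall into two groups. The closed orbits of $\phi$ not in $L$ survive unchanged. The remaining orbits lie in the blow-up collar near $\partial M^\circ$; these are peripheral and hence have zero class in $H_1(M^\circ,\partial M^\circ)$. So $\cone_1(\phi^\circ)$ is generated by $j$ of the classes of the orbits of $\phi$ not in $L$, together with $0$. Since $j$ kills $[\gamma]$ for $\gamma\subset L$, we get
\[
\cone_1(\phi^\circ) = j(\cone_1(\phi)).
\]
A linear surjection maps relative interiors of convex cones onto relative interiors. That is, for a convex set $C$ and linear $j$, one has $j(\mathrm{ri}\, C) = \mathrm{ri}\, j(C)$, a standard fact from convex analysis.

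Next I invoke \Cref{prop:singhomologycone}. It gives a point $v \in \cone_1(L)$ lying in $\mathrm{ri}\,\cone_1(\phi)$. Since $v$ is a nonnegative combination of the classes $[\gamma]$ for $\gamma\subset L$, we have $j(v)=0$. Therefore
\[
0 = j(v) \in j(\mathrm{ri}\,\cone_1(\phi)) = \mathrm{ri}\,\cone_1(\phi^\circ),
\]
which is the claim.

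The only delicate point is the first step, which requires checking that no closed orbit of $\phi^\circ$ contributes a class outside $j(\cone_1(\phi))$. Concretely, the orbits on $\partial\nu$ and inside the collar $\{r_0\le r\le 1\}$ must have homology classes in $H_1(M^\circ,\partial M^\circ)$ that are either zero or images of classes of orbits of $\phi$. Being peripheral, they are zero. This also matches \Cref{lem:blowupnonperispec}(1), by which every closed orbit of the blow-up is freely homotopic in $M$ to a closed orbit of $\phi$. I do not anticipate a serious obstacle beyond this bookkeeping.
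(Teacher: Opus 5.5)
Your argument is correct and is essentially the paper's proof: the paper also notes that $\cone_1(\phi^\circ)$ is the image of $\cone_1(\phi)$ under $H_1(M;\mathbb{R}) \to H_1(M,L;\mathbb{R}) \cong H_1(M^\circ,\partial M^\circ;\mathbb{R})$, that $\cone_1(L)$ maps to $0$, and then concludes from \Cref{prop:singhomologycone}. One small correction that does not affect the argument: $j$ is not surjective when $L$ has $k\ge 2$ components, because the connecting map onto $\ker\bigl(H_0(\nu(L))\to H_0(M)\bigr)\cong\mathbb{R}^{k-1}$ is nonzero (arcs joining distinct boundary tori), but you never need surjectivity, since $j(\mathrm{ri}\,C)=\mathrm{ri}\,j(C)$ holds for every linear map $j$.
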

\begin{proof}
From the definition, $\cone_1(\phi^\circ)$ is the image of $\cone_1(\phi)$ under the map 
$$H_1(M; \mathbb{R}) \to H_1(M,L; \mathbb{R}) \cong H_1(M^\circ,\partial M^\circ; \mathbb{R}).$$
Meanwhile, $\cone_1(L)$ is sent to $0$ under this map.
Thus the corollary follows from \Cref{prop:singhomologycone}.
\end{proof}

\subsection{Blowing up the flow}

For the rest of this section, we fix a pseudo-Anosov flow $\phi$ and choice of essential multicurve $\Gamma$ on $\partial M^\circ$ as in the hypothesis of \Cref{prop:patoreeb}. 
The first step in the proof of \Cref{prop:patoreeb} is to construct a stable Hamiltonian structure whose Reeb flow is a solid blow-up $\phi^\circ_{\bullet}$ of $\phi^\circ$.

\begin{lem} \label{lem:patoshs}
There exists a stable Hamiltonian structure $(\omega,\lambda)$ on $M^\circ$ whose Reeb flow $X_1$ satisfies the following properties:
\begin{enumerate}[label=(\roman*)]
    \item\label{item_patoshs_blowup} $X_1$ is a solid blow-up $\phi^\circ_{\bullet \mathcal{C}}$ for some collection $\mathcal{C}$ of closed orbits. Thus by \Cref{lem:blowupnonperispec}, we have:
    \begin{enumerate}
        \item Each closed orbit of $X_1$ is homotopically essential in $M^\circ$. In fact, each closed orbit of $X_1$ is freely homotopic to a closed orbit of $\phi$ in $M$, hence also homotopically essential in $M$.
        \item $\mathcal{P}^\circ(X_1) = \mathcal{P}^\circ(\phi^\circ)$.
        \item For every $g \in \mathcal{P}^\circ(X_1)$, the sum of Lefschetz indices of closed orbits of $X_1$ with free homotopy class $g$ is well-defined and nonzero.
    \end{enumerate}
    \item\label{item_patoshs_dlambda} $\lambda \wedge d \lambda \geq 0$. In other words, $d\lambda$ is a nonnegative multiple of $\omega$ at each point.
    \item\label{item_patoshs_zerohomology} $[\omega] = 0$ in $H^2(M^\circ;\mathbb{R})$.
    \item\label{item_patoshs_localcoordinates} Within the blow-up region around each boundary component of $M^\circ$, $(\omega,\lambda)$ are of the form
    \begin{align*}
    \omega &= dr d\theta - s(r) dr dz \\
    \lambda &= q d\theta + p dz
    \end{align*}
    in cylindrical coordinates, where $s$ is a monotone function that is a restriction of that in \Cref{eq:blowupreebflow}.
\end{enumerate}

\end{lem}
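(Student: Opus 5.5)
Let $S$ be a Birkhoff section for $\phi$ that is positive relative to $L$; it exists by the strict positivity hypothesis. The plan is to run the construction of \cite{Zun25} with this $S$, keeping track of what happens along $L$ versus along the other boundary orbits. Let $\mathcal{C}$ be the boundary orbits of $S$ that are not in $L$; all of them are positive. I would solid blow-up $\phi$ along $\mathcal{C}$ and boundary blow-up along $L$, which gives $\phi^\circ_{\bullet\mathcal{C}}$ on $M^\circ$. The section $S$ then becomes a genuine cross section $S^\circ$ of the blown-up flow away from the blow-up regions. Near $\partial M^\circ$ and near the cores of the solid blow-up regions it meets the flow in a controlled way: it spirals onto the boundary tori or closes up along the degeneracy curves, as in \cite[Section 3]{Zun25}. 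Once this is set up, item \ref{item_patoshs_blowup} follows from \Cref{lem:blowupnonperispec}.

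Next I would build $(\omega,\lambda)$. Following \cite{Zun25}, I take $\omega$ to be a flow-invariant closed 2-form that is positive on $X_1$-transverse planes; concretely, it is the suspension of an area form on $S^\circ$ invariant under the (smoothed) first return map. In each blow-up region I use the explicit normal form $\omega = dr\,d\theta - s(r)\,dr\,dz$, $\lambda = q\,d\theta + p\,dz$. One checks that $X_1 = \partial_z + s(r)\partial_\theta$ spans $\ker\omega$ there, and $\lambda(X_1)=p+qs>0$; this gives \ref{item_patoshs_localcoordinates}. For $\lambda$, I take $dt$ (where $t$ is the return-time coordinate) away from the boundary and interpolate into the normal forms.

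The sign condition \ref{item_patoshs_dlambda} is checked locally. Away from blow-up regions $d\lambda=0$. In a region with $p,q$ functions of $r$ we have $d\lambda = q'\,dr\,d\theta + p'\,dr\,dz$, and this must be a nonnegative multiple of $\omega$, i.e.\ $p' = -s q'$ with $q'\ge 0$. At a positive boundary orbit, the way the section $S$ winds makes this possible, with $q'>0$; this is where positivity of the orbits in $\mathcal{C}$ is used. Along $L$ the components are removed, so the boundary blow-up region is a collar $[r_0,1]\times T^2$. There I choose $q$ constant and $p$ constant, so $d\lambda=0$. Negative boundary components of $S$ can only lie along $L$, so the usual obstruction to choosing $\lambda$ never occurs inside $M^\circ$.

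The main obstacle is \ref{item_patoshs_zerohomology}, making $[\omega]=0$ in $H^2(M^\circ;\mathbb R)$. By Lefschetz duality with $H_1(M^\circ,\partial M^\circ)$, I would argue as follows. The class $[\omega]$ pairs with closed orbits $\gamma$ of $X_1$ via $\int_\gamma \iota$ of a primitive, or equivalently pairs with 2-cycles. Pairing $\omega$ against a closed surface gives the flux, and that pairing is determined by how the invariant transverse measure $\mu$ on $S^\circ$ integrates. Averaging $\omega$ over $X_1$-invariant measures, $[\omega]$ is dual to a class in $\cone_1(\phi^\circ)$ determined by the chosen invariant transverse measure. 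I would therefore choose the transverse measure as a positive combination of measures supported near closed orbits (Dirac-type measures, smoothed) whose total homology class is $0$. This is possible because $0$ lies in the relative interior of $\cone_1(\phi^\circ)$ by \Cref{cor:homologycone}. Concretely, I would build $\omega$ as a positive sum $\sum c_i\omega_i$ of closed 2-forms, each realizing a given transverse class, plus a form with full support so that $\omega$ is still nowhere vanishing on $\ker$ of the transverse direction. I would then adjust the coefficients so that the Poincaré dual of $[\omega]$ is zero. The delicate point is keeping $\omega$ nonvanishing everywhere while hitting the class $0$ exactly, and that is exactly what the relative-interior statement allows. Compatibility with the normal forms near $\partial M^\circ$ is then arranged, since the boundary pieces contribute peripheral classes that vanish in $H_1(M^\circ,\partial M^\circ)$.
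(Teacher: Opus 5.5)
Your handling of (i), (ii) and (iv) is essentially the paper's. You suspend an invariant area form on the section, take $\lambda = dt$, and pass to the normal forms in the blow-up regions. You then use positivity of the orbits in $\partial S\setminus L$ to get $p,q>0$, so that $\lambda = F(r)\,d\theta + G(r)\,dz$ with $F'\ge 0$, $G' = -sF'$ can be capped off. The gap is in (iii).

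Every closed 2-form you add to $\omega$ must satisfy $\iota_{X_1}\omega_i = 0$, i.e.\ be an $X_1$-invariant transverse measure. Otherwise $\ker\omega$ is no longer spanned by $X_1$ and the Reeb flow changes. A smoothed Dirac measure near a closed orbit is not invariant. Outside the solid blow-up regions, the closed orbits of $\phi^\circ_{\bullet\mathcal{C}}$ are hyperbolic and have no invariant tubular neighbourhoods, so no smooth invariant 2-form concentrated near such an orbit exists. With your choice $\mathcal{C} = \partial S\setminus L$ fixed in advance, the only room to adjust the invariant transverse area is inside the blow-up regions around $\partial S\setminus L$. That moves $[\omega]$ only by positive multiples of those particular orbit classes, and nothing guarantees these can cancel the class of the base suspension form.

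The missing idea is to enlarge the set of blown-up orbits, which the statement allows since $\mathcal{C}$ is arbitrary. Pick further closed orbits $\gamma_1,\dots,\gamma_k$ of $\phi^\circ$ and solid blow them up too. $S$ remains a Birkhoff section. Inside each new blow-up region the flow preserves the tori $\{r=\mathrm{const}\}$, so the return map preserves any area form $g(r)\,dr\,d\vartheta$ on $S\cap\nu(\gamma_i)$. Inflating $g$ changes $\omega$ by the compactly supported closed form $(g(r)-1)(dr\,d\theta - s(r)\,dr\,dz)$, which is still annihilated by $X_1$. Its Poincar\'e--Lefschetz dual is an arbitrarily large positive multiple of $[\gamma_i]\in H_1(M^\circ,\partial M^\circ;\mathbb{R})$. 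Since $0$ lies in the relative interior of $\cone_1(\phi^\circ)$ by \Cref{cor:homologycone}, you can choose the $\gamma_i$ and the added masses so that the total class is exactly $0$. This is the freedom the paper invokes: the choice of extra orbits in the blown-up collection and of the area form in their blow-up regions.

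A secondary point: the paper also blows up $\sing(\phi)$ so that the flow is smooth near the singular orbits. Your appeal to a ``smoothed'' return map there would need its own area-preserving smoothing argument. Including $\sing(\phi)$ in $\mathcal{C}$ avoids the issue.
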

\begin{proof}
This lemma essentially follows from the main construction of \cite{Zun25}. For completeness, we will outline the main ideas of the construction here.

By assumption of relative positivity, $\phi$ admits a Birkhoff section $S$ all of whose negative boundary components lie along $L$. 
We solid blow-up $\phi$ at a collection $\mathcal{D}$ of closed orbits, containing $L \cup \partial S \cup \sing(\phi)$, to get a flow $\phi_{\bullet \mathcal{D}}$ that is smooth and for which $S$ remains a Birkhoff section.
Let $\nu_0$ be a union of $\phi_{\bullet \mathcal{C}}$-invariant tubular neighborhood around each component of $L \cup \partial S$, each contained in the corresponding blow-up region.
Then $\phi_{\bullet \mathcal{D}}$ restricts to a flow on $M_{\nu_0} = M \backslash \nu_0$ that has $S_{\nu_0} = S \cap M_{\nu_0}$ as a global section.
Correspondingly, we can express $M_{\nu_0}$ as a mapping torus $S_{\nu_0} \times [0,1]_t/(x,1) \sim (f_{\nu_0}(x),0)$ with monodromy $f_{\nu_0}\colon S_{\nu_0} \to S_{\nu_0}$.
Up to a smooth deformation of $\phi$, we may assume that $f_{\nu_0}$ preserves an area form $\Omega_{\nu_0}$ on $S_{\nu_0}$ that has the form $\Omega_{\nu_0} = dr d\vartheta$ in polar coordinates near $\partial S_{\nu_0}$.

We set $\lambda_{\nu_0} = dt$ on $M_{\nu_0}$, and set $\omega_{\nu_0}$ to be the closed 2-form on $M_{\nu_0}$ induced by $\Omega_{\nu_0}$ under the mapping torus structure.
Then $(\omega_{\nu_0}, \lambda_{\nu_0})$ is a stable Hamiltonian structure on $M_{\nu_0}$ whose Reeb flow $X_{\nu_0}$ is the suspension flow, which is a reparametrization of the restriction of the blown-up flow $\phi_{\bullet \mathcal{D}}$ to $M_{\nu_0}$, or in other words, a boundary blow-up $\phi^{\circ \mathcal{D}}$. 

Note that the coordinate $\vartheta$ near $\partial S_{\nu_0}$ is not preserved by $f_{\nu_0}$, instead, $f_{\nu_0}$ effects a shear $(r,\vartheta) \mapsto (r,\vartheta-\delta(r))$. 
Thus we define $\psi = \vartheta + \delta(r)t$, so that $(r,\psi,t)$ descends to a coordinate system on $M_{\nu_0}$ near $\partial M_{\nu_0}$ with $r \in [r_0,1], \psi \in \mathbb{R}/\mathbb{Z}, t \in \mathbb{R}/\mathbb{Z}$.
Under this coordinate system,
$(\omega_{\nu_0},\lambda_{\nu_0})$ is of the form 
\begin{align*}
\omega_{\nu_0} &= dr d\psi - \delta(r) dr dt \\
\lambda_{\nu_0} &= dt.
\end{align*} 

Now within these blow-up regions, the change in coordinates between $(r,\psi,t)$ in the mapping torus structure of $M_{\nu_0}$, and the cylindrical coordinates $(r,\theta,z)$ in $M$ is of the form $\psi = n \theta + m z, t = q \theta + p z$ for some $np-mq = 1$.
Thus we can write
\begin{align*}
\omega_{\nu_0} &= h(r) dr d\theta - k(r) dr dz \\
\lambda_{\nu_0} &= q d\theta + p dz 
\end{align*}
In particular, the Reeb vector field $X_{\nu_0}$ must be a positive multiple of $ k(r) \frac{\partial}{\partial \theta} + h(r) \frac{\partial}{\partial z}$.
Now recall that the flow generated by $X_{\nu_0}$ is a reparametrization of $\phi^{\circ \mathcal{D}}$, thus $h(r) > 0$ and $k(r) = s(r) h(r)$, where $s$ is the function in \Cref{eq:blowupreebflow}.

Then up to rescaling $\omega_{\nu_0}$, we can assume that  
\begin{align*}
\omega_{\nu_0} &= dr d\theta - s(r) dr dz \\
\lambda_{\nu_0} &= q d\theta + p dz.
\end{align*}

Finally, we extend $(\omega_{\nu_0},\lambda_{\nu_0})$ into a stable Hamiltonian structure $(\omega,\lambda)$ inside the components of $\nu$ around $\partial S \backslash L$. 
We can extend $\omega_{\nu_0}$ by simply setting $\omega = dr d\theta - s(r) dr dz$.
To extend $\lambda_{\nu_0}$, we use the hypothesis that each of $\partial S \backslash L$ is a positive boundary component of $S$,
and the degeneracy slope $s_1$ is nonnegative.
This implies that the constant $p,q$, appearing in the change of coordinates $(r,\psi,t) \leftrightarrow (r,\theta,z)$ near each components of $\partial S \backslash L$, must both be positive.
Thus, similarly to \cite[Lemma 3.4]{Zun25}, we can set $\lambda = F(r) d\theta + G(r) dz$, where $F\colon [0,r_0] \to \mathbb R$ satisfies
\begin{itemize}
    \item $F'(r) \geq 0$, 
    \item $F(r) = r^2$ near $r=0$, and
    \item $F(r) = q$ near $r=r_0$. %
\end{itemize}
and $G\colon [0,r_0] \to \mathbb R$ satisfies
\begin{itemize}
    \item $G'(r) = -F'(r)s(r)$,  
    \item $G(r) = p$ near $r=r_0$,  and
    \item $G(r) > 0$.
\end{itemize}
See \Cref{fig:birkhoffsectionslope} for a summary of the slopes in this construction.

\begin{figure}
    \centering
    \fontsize{8pt}{8pt}
    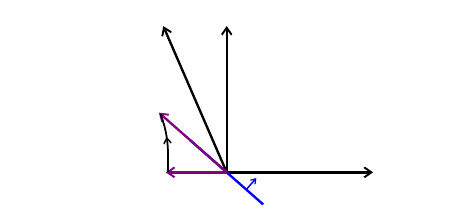
    \caption{Some slopes in \Cref{lem:patoshs}.}
    \label{fig:birkhoffsectionslope}
\end{figure}

Then the Reeb flow $X_1$ of $(\omega,\lambda)$ continues to be a reparametrization of the restriction of $\phi_{\bullet \mathcal{D}}$ to $M^\circ$, i.e., $X_1$ is a solid blow-up of $\phi^\circ$ along $\mathcal{C} = \mathcal{D} \backslash L$.
This shows item \ref{item_patoshs_blowup}. By construction $d\lambda = F'(r)\omega$ in the neighborhoods of $\mathcal{D} \backslash L$, so item \ref{item_patoshs_dlambda} follows, and similarly item \ref{item_patoshs_localcoordinates} also follows from the construction.

It remains to arrange for \ref{item_patoshs_zerohomology}. 
Note that there is freedom in the construction, coming from the choice of the orbits in $\mathcal{D} \backslash (L \cup \partial S \cup \sing(S))$ and the choice of the area form $\Omega_\nu$ within the blow-up regions around $\mathcal{D} \backslash (L \cup \partial S \cup \sing(S))$ (among other things).
These choices allow us to modify $[\omega] \in H^2(M^\circ; \mathbb{R}) \cong H_1(M^\circ, \partial M^\circ; \mathbb{R})$ by adding positive multiples of homology classes of closed orbits of $\phi^\circ$.
In particular, for every $x$ in the relative interior of $\cone_1(\phi^\circ)$, we can arrange for $[\omega]$ to be a large multiple of $x$.
By \Cref{cor:homologycone}, we can choose $x=0$.
\end{proof}

The next step is to `diffuse' $d\lambda$ by adding to $\lambda$ a primitive $\beta$ of $\omega$ so that it becomes a contact form $\alpha$.
To ensure that $\alpha$ is cylindrically symmetric near the boundary, we have to choose $\beta$ with cylindrical symmetry as well.

\begin{lem} \label{lem:symmetricprimitiveforomega}
Let $\omega$ be a closed 2-form on $M^\circ$ with $[\omega] = 0$ in $H^2(M^\circ;\mathbb{R})$. Suppose $\omega$ is of the form $\omega = dr d\theta - s(r) dr dz$ in the blow-up region around each boundary component of $M^\circ$. Then there exists a 1-form $\beta$ on $M^\circ$ such that $d\beta = \omega$ and such that $\beta$ is of the form $\beta = F(r) d\theta + G(r) dz$ within the blow-up region around each boundary component of $M^\circ$.
\end{lem}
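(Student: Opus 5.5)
Our plan is to first construct explicit cylindrically symmetric primitives of $\omega$ near each boundary component, and then show that these local primitives extend to a global primitive on $M^\circ$ by correcting with a closed form whose cohomology class can be adjusted. In a collar $U \cong T^2 \times [r_0, 1]$ of a component of $\partial M^\circ$ (inside the blow-up region), we have $\omega = dr\,d\theta - s(r)\,dr\,dz$. Set
\[
\beta_U = F_0(r)\,d\theta + G_0(r)\,dz, \qquad F_0' = 1, \quad G_0' = -s.
\]
Then $d\beta_U = F_0'\,dr\,d\theta + G_0'\,dr\,dz = \omega$. The constants of integration in $F_0$ and $G_0$ are free parameters $(a,b) \in \mathbb{R}^2$, so on each component we have the family $\beta_U + a\,d\theta + b\,dz$. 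The closed forms $a\,d\theta + b\,dz$ represent every class in $H^1(U;\mathbb{R}) \cong H^1(T^2;\mathbb{R})$.

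Next, since $[\omega] = 0$, pick any global primitive $\beta_0$ with $d\beta_0 = \omega$ on $M^\circ$. On the union of collars $U$, the difference $\beta_0 - \beta_U$ is closed, so $\beta_0 - \beta_U = c + dg$ with $c = a\,d\theta + b\,dz$ for suitable constants $(a,b)$ on each component and a function $g$ on $U$. Replace $\beta_U$ by $\beta_U + c$, which still has the required form $F(r)\,d\theta + G(r)\,dz$ since we have only shifted $F$ and $G$ by constants. Now $\beta_0 - \beta_U = dg$ on $U$. Extend $g$ to a smooth function $\tilde g$ on $M^\circ$ that agrees with $g$ on a slightly smaller collar $U' \subset U$, using a cutoff function. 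Setting $\beta = \beta_0 - d\tilde g$ gives $d\beta = \omega$ everywhere, and on $U'$ we get $\beta = \beta_U$, which has the desired symmetric form. If the blow-up region itself is the required domain rather than a collar, we run the same argument with $U$ equal to that region minus its removed core, which is still $T^2 \times$ interval.

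The only real point to check is that no obstruction arises when matching the cohomology classes on the boundary. We avoid this by not asking the local primitives to match $\beta_0$ a priori: we absorb the boundary cohomology class into the free integration constants, which span all of $H^1(T^2;\mathbb{R})$. What remains is an exact correction $dg$, and exact forms can always be cut off. We do not expect a genuine obstacle. The mild care needed is to make sure the region where $\beta$ is symmetric is the whole blow-up region claimed, which is handled by doing the cutoff just outside it, or by shrinking the region as the later arguments allow.
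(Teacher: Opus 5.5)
Your argument is correct and is essentially the paper's proof. The paper likewise absorbs the mismatch in $H^1(\nu;\mathbb{R})$ into constants $m\,d\theta + n\,dz$ added to the explicit symmetric primitive $r\,d\theta - (\int_{r_0}^r s)\,dz$, but phrases it via the exact sequence $H^1(\nu)\to H^2(M^\circ,\nu)\to H^2(M^\circ)$ and a relative primitive $\beta_3$ vanishing on $\nu$, whereas you start from a global primitive and cut off the exact discrepancy $dg$; both versions need the same harmless shrinking of the collar, with the paper's bump function $\rho$ playing the role of your cutoff.
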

\begin{proof}
Let $\beta_1 = \rho(r) (r d\theta - (\int_{r_0}^r s(u) du) dz)$, where $\rho$ is a bump function on $M^\circ$ such that $\rho(r)$ is constant equal to $1$ on $[r_0,1]$.
Then $d\beta_1 = \omega$ on $\nu$, thus $\omega - d\beta_1$ vanishes on $\nu$.

Under the exact sequence
$$H^1(\nu; \mathbb{R}) \to H^2(M^\circ, \nu; \mathbb{R}) \to H^2(M^\circ; \mathbb{R})$$
$[\omega - d\beta_1] \in H^2(M^\circ, \nu; \mathbb{R})$ maps to $[\omega - d\beta_1] = [\omega] = 0 \in H^2(M^\circ; \mathbb{R})$, thus $[\omega - d\beta_1]$ is the image of some class $z \in H^1(\nu; \mathbb{R})$.

But each class $x \in H^1(\nu; \mathbb{R})$ can be represented by a 1-form of the form $m d\theta + n dz$. 
The image of $[m d\theta + n dz] \in H^1(\nu; \mathbb{R})$ in $H^2(M^\circ, \nu; \mathbb{R})$ is $[d\beta_2]$ for $\beta_2 = \rho(r)(m d\theta + n dz)$, where $\rho$ is the same bump function as above.
Then $[\omega - d\beta_1 - d\beta_2] = 0$ in $H^2(M^\circ, \nu; \mathbb{R})$, thus there exists 1-form $\beta_3$ on $M^\circ$ vanishing on $\nu$ such that $\omega - d\beta_1 - d\beta_2 = d \beta_3$.

Setting $\beta = \beta_1 + \beta_2 + \beta_3$, we have $\omega = d \beta$ where $\beta = (r+m) d\theta - (\int_{r_0}^r s(r) dr - n) dz$ on $\nu$.
\end{proof}

With \Cref{lem:symmetricprimitiveforomega}, we can now perform the diffusion step.

\begin{lem} \label{lem:diffuseshs}
There exists a contact structure $\xi_1$ on $M^\circ$ with a contact 1-form $\alpha_1$ whose Reeb flow $X_1$ is (a reparametrization of) the one in \Cref{lem:patoshs}, and so that $\alpha_1$ is of the form $\alpha_1 = F(r) d\theta + G(r) dz$ within the blow-up region around each boundary component of $M^\circ$.
\end{lem}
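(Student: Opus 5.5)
Take the stable Hamiltonian structure $(\omega,\lambda)$ from \Cref{lem:patoshs} and a primitive $\beta$ of $\omega$ from \Cref{lem:symmetricprimitiveforomega}. The latter applies because $[\omega]=0$ by \ref{item_patoshs_zerohomology}, and $\omega$ has the form $dr\,d\theta - s(r)\,dr\,dz$ near $\partial M^\circ$ by \ref{item_patoshs_localcoordinates}. Set
\[
\alpha_1 = \lambda + \epsilon \beta
\]
for a small constant $\epsilon>0$. We then check three things: that $\alpha_1$ is a contact form, that its Reeb field is parallel to $X_1$, and that it has the required form near the boundary.

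\emph{Reeb direction.} Since $d\lambda = f\omega$ with $f \geq 0$ by \ref{item_patoshs_dlambda}, we get
\[
d\alpha_1 = (f+\epsilon)\,\omega .
\]
Because $f+\epsilon>0$ everywhere, $\ker d\alpha_1 = \ker \omega$, which is spanned by $X_1$.

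\emph{Contact condition.} We have
\[
\alpha_1\wedge d\alpha_1 = (f+\epsilon)\,(\lambda+\epsilon\beta)\wedge\omega .
\]
Since $\lambda\wedge\omega>0$ everywhere, $M^\circ$ is compact and $\beta$ is bounded, we get $(\lambda+\epsilon\beta)\wedge\omega>0$ for $\epsilon$ small. Hence $\alpha_1\wedge d\alpha_1>0$. Equivalently, $\alpha_1(X_1)>0$ when $X_1$ is normalized by $\lambda(X_1)=1$. The Reeb vector field of $\alpha_1$ is therefore $X_1/\alpha_1(X_1)$, a positive reparametrization of $X_1$. Set $\xi_1=\ker\alpha_1$.

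\emph{Boundary form.} In the blow-up region around each boundary component, \ref{item_patoshs_localcoordinates} gives $\lambda = q\,d\theta + p\,dz$ with constant $p,q$. \Cref{lem:symmetricprimitiveforomega} gives $\beta = F_\beta(r)\,d\theta + G_\beta(r)\,dz$. So
\[
\alpha_1 = \bigl(q+\epsilon F_\beta(r)\bigr)\,d\theta + \bigl(p+\epsilon G_\beta(r)\bigr)\,dz ,
\]
which has the required form $F(r)\,d\theta + G(r)\,dz$.

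The main subtlety is uniformity in the positivity estimate. It needs compactness of $M^\circ$, which holds since $M^\circ$ is a compact manifold with boundary and all forms extend smoothly to the boundary. It also needs $\lambda\wedge\omega$ bounded below by a positive constant, which follows from compactness together with the stable Hamiltonian condition. No other obstacle is expected.
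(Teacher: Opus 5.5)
Your proposal is correct and is essentially the paper's proof: set $\alpha_1=\lambda+\epsilon\beta$ with $\beta$ the symmetric primitive of $\omega$, and use $d\alpha_1=(f+\epsilon)\omega$ to get both the contact condition for small $\epsilon$ and the fact that the Reeb field is a positive multiple of $X_1$. You also spell out two points the paper leaves implicit: the boundary form, since $\lambda$ has constant coefficients there and $\beta$ is $r$-symmetric, and the use of compactness of $M^\circ$ to choose a uniform $\epsilon$.
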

\begin{proof}
Let $(\omega,\lambda)$ be the stable Hamiltonian structure defined in \Cref{lem:patoshs}.
By \Cref{lem:patoshs}, $d\lambda$ is a nonnegative multiple of $\omega$ at each point, say $d \lambda = f \omega$ for $f \geq 0$.
Let $\beta$ be a 1-form as in \Cref{lem:symmetricprimitiveforomega}. 
We set $\alpha_1 = \lambda + \epsilon \beta$.
Then $d\alpha_1 = (f+\epsilon) \omega$.
Thus $\alpha_1 \wedge d \alpha_1 = (f + \epsilon)(\lambda \wedge \omega + \epsilon \beta \wedge \omega)$.
Since $\lambda \wedge \omega > 0$, for small $\epsilon > 0$, we have $\lambda \wedge \omega + \epsilon \beta \wedge \omega > 0$. Thus $\alpha_1 \wedge d \alpha_1 > 0$.

Also, this computation shows that the Reeb vector field of $\alpha_1$ is a positive multiple of that of $(\omega,\lambda)$. So the Reeb flow of the former is a reparametrization of the latter.
\end{proof}

\subsection{Boundary modification and perturbation}

We are already most of the way towards proving \Cref{prop:patoreeb}.
The remaining items to arrange for are 
\ref{item_patoreeb_adapted}, saying that we can choose the contact form $\alpha$ to be adapted,
\ref{item_patoreeb_stdform}, saying that our contact structure has a standard form in the neighbourhood of $\partial M^\circ$,
\ref{item_patoreeb_nondegenerate}, saying that the associated Reeb flow is nondegenerate, and
\ref{item_patoreeb_nogirouxtorsion}, saying that there is no Giroux torsion.
We will arrange for the first three by modifying the contact form near the boundary of $M^\circ$, then perturbing inside the blow-up regions.
Finally, we will argue for the last item using an argument involving \Cref{prop_Girouxtorsion_implies_lotsorbits} and JSJ decomposition.

\begin{lem} \label{lem:dividingsetstdmodel}
There exists a contact structure $\xi_2$ on $M^\circ$ with a contact 1-form $\alpha_2$ whose Reeb flow $X_2$ satisfies the following properties:
\begin{enumerate}[label=(\roman*)]
    \item $X_2$ agrees with (a reparametrization of) $X_1$ away from a tubular neighborhood of $\partial M^\circ$ that is contained in the blow up region.
    \item Within the blow up region, $X_2$ is transverse to two transverse fibrations over $S^1$, whose positive half-planes intersect in a proper cone.
    \item The dividing set of $\xi_2$ on $M^\circ$ is $\Gamma$.
    \item $\alpha_2$ is adapted to the sutured manifold $(M^\circ, \Gamma)$ in the sense of \Cref{def:adaptedcontactform}. 
    \item In a tubular neighbourhood of $\partial M^\circ$, $\xi_2$ has a standard form depending only on $\Gamma$.
\end{enumerate}
In particular, $\alpha_2$ satisfies conditions \ref{item_patoreeb_nonullhomotopic}--\ref{item_patoreeb_stdform} in \Cref{prop:patoreeb}, and its Reeb flow $X_2$ is only degenerate within the blow-up regions around orbits in the interior of $M^\circ$, and regions homeomorphic to thickened tori $T^2 \times I$ contained in and parallel to boundary blow-up regions around boundary tori of $M^\circ$.
\end{lem}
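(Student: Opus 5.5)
The plan is to modify $\alpha_1$ only inside a collar $\{r_0 \le r \le 1\}$ of each boundary torus of $M^\circ$, staying among $T^2$-invariant forms. By \Cref{lem:diffuseshs}, $\alpha_1 = F(r)\,d\theta + G(r)\,dz$ there. We regard $(F,G)$ as a curve $c(r)$ in $\mathbb{R}^2$. For such a form the contact condition is $F G' - G F' > 0$, i.e.\ $c(r)$ rotates strictly monotonically counterclockwise. The Reeb field is proportional to $G'\partial_\theta - F'\partial_z$, i.e.\ to the direction of $c'(r)$ rotated by a quarter turn. At $r=r_0$ the curve is the one inherited from $\alpha_1$. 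Its Reeb direction there is $s(r_0)\partial_\theta + \partial_z$, close to the degeneracy slope $s_1$. We extend $c$ over $[r_0,1]$ as follows.

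\emph{Step 1.} Let the curve keep rotating counterclockwise until its tangent direction corresponds to a Reeb direction lying in $\Gamma$. Then keep $c$ straight on a short interval, so that the Reeb field is constant and tangent to a torus foliation of slope $\Gamma$. This is a standard neighborhood of the dividing set, namely the $Cdt+\beta$ form of \Cref{def:adaptedcontactform}.

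\emph{Step 2.} Near $r=1$, let $c$ turn so that $\partial M^\circ$ becomes convex with dividing set $\Gamma$. The simplest route is to take $\partial M^\circ$ slightly tilted, i.e.\ not a level torus, using the standard model of a neighborhood of a convex torus with $2k$ parallel dividing curves. On this model the Reeb field is transverse to $R_\pm$ with the correct signs, and the form depends only on $\Gamma$. This yields items (iii)--(v).

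The rotation needed from the degeneracy slope to the slope of $\Gamma$ is less than a half turn, because $\Gamma$ has slope different from $s_1$. Hence $c$ can be chosen to turn by less than $\pi$ in total. Consequently the Reeb directions in the collar lie in a proper cone of $H_1(T)$. This gives the two transverse fibrations over $S^1$ in item (ii): each is the fibration by annuli/tori of a slope lying outside that cone, which the Reeb field crosses positively. It also gives \ref{item_patoreeb_inacone}.

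New closed orbits arise only on level tori where the Reeb slope is rational. These orbits are peripheral and nonprimitive or peripheral, so $\mathcal{P}^\circ$ and the Lefschetz sums are unchanged and \ref{item_patoreeb_nonullhomotopic}--\ref{item_patoreeb_lefschetz} carry over from \Cref{lem:patoshs}. The only caveat is that primitive peripheral classes may appear, for instance meridians. This is why \ref{item_patoreeb_nonullhomotopic} excludes meridians. These orbits come in $T^2\times I$ families, which are the degenerate regions named in the final statement.

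The main obstacle is Step 2: making a torus-invariant contact form meet a convex boundary with prescribed dividing curves while keeping the Reeb field adapted, since level tori of invariant forms are never convex. I would handle it by splicing in the standard Honda--Colin--Giroux--Hofer adapted model near $\partial M^\circ$, i.e.\ taking the form $Cdt+\beta$ near $\Gamma$ and the transverse Reeb field on $R_\pm$. I would check that its inner boundary matches a linear $(F,G)$ curve of slope $\Gamma$ from Step 1, and interpolate while keeping rotation monotone.
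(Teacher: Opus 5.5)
There is a genuine gap, and it is exactly the step you flag as the main obstacle. Step~2 is the whole content of the lemma: you must produce a convex boundary with dividing set $\Gamma$ and an adapted form \emph{while keeping control of the Reeb flow in the collar}. Splicing in an unspecified ``standard adapted model'' and ``interpolating while keeping rotation monotone'' is not an argument. Monotone rotation of $(F,G)$ is the contact condition only for $T^2$-invariant forms, and, as you note yourself, the boundary cannot be a level torus of such a form. So nothing shows that the spliced form is contact, that $\partial M^\circ$ is convex with dividing set exactly $\Gamma$, or that $\alpha_2$ is adapted. More importantly, nothing controls the Reeb dynamics on the spliced piece. Your transversality to two fibrations, the proper cone, and the orbit bookkeeping behind items (2)--(5) of Proposition~\ref{prop:patoreeb} (and the degeneracy statement) are only argued on the invariant part.

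Step~1 also aims at the wrong target, in two ways.

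\emph{The layer is not a suture neighbourhood.} An invariant layer where the Reeb field is parallel to $\Gamma$ is not the $C\,dt+\beta$ neighbourhood of the sutures. In an adapted form the Reeb field is tangent to $\partial M^\circ$ along $\Gamma$ and crosses $\Gamma$ from $R_-$ to $R_+$. Its orbits there are arcs leaving $M^\circ$, not closed orbits parallel to $\Gamma$.

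\emph{The dividing slope is governed by $\xi$, not by the Reeb field.} In your notation this means the position $c(r)$, not the direction $c'(r)$. Dividing curves oriented as $\partial R_+$ are transverse to $\xi$ with a fixed sign. Hence adjacent parallel dividing curves, oriented alike, are transverse to $\xi$ with opposite signs. Write $\Gamma$ as slope $\partial_\psi$ and the invariant form as $L(r)\,d\psi+J(r)\,dt$, and consider a torus $\{r=g(\psi,t)\}$. If $L$ does not vanish on the range of $g$, its characteristic foliation crosses every curve of slope $\Gamma$ in the same direction. It then cannot be divided by such curves. On your Step~1 layer $\alpha=C\,d\psi+K(r)\,dt$ with $C\neq 0$, so this obstruction applies.

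The paper handles this differently. It first rotates the Reeb field slightly \emph{past} $\Gamma$, so that $\alpha_1=(1-\eta r)\,d\psi-r\,dt$ with $X_1=\partial_\psi-\eta\partial_t$. It then glues in a non-invariant block $M_H$. This is the mapping torus of the time-one map of a Hamiltonian $H$ on an annulus, with form $d\psi+\beta$ and Reeb field $\partial_\psi$. Here $H=\eta r$ near the gluing region and $H$ has $N$ saddles. The block is truncated at a convex torus obtained by suspending a suitable curve. Transversality to the fibrations by $\psi$ and by $t+2\eta\psi$ is then built in, and the closed orbits and degenerate regions are read off from $H$.

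If you prefer to stay $T^2$-invariant, you must keep turning $c$ until $\xi$ contains the slope of $\Gamma$ at some level $r_*$, keeping $c'$ in a proper cone. Then take $\partial M^\circ=\{r=r_*+\epsilon\sin(2\pi N t)\}$. Its Legendrian divides sit at the zeros of the sine. The Reeb tangencies, which serve as the dividing curves, sit at its critical points. Either way, this construction is what your proposal is missing.
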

\begin{proof}
Recall that within each blow-up region $\nu$ around a component of $\partial M^\circ$, the Reeb vector field $X_1$ of $\alpha_1$ is a positive multiple of $s(r) \frac{\partial}{\partial \theta} + \frac{\partial}{\partial z}$ at each point, where $s(r)$ is decreasing, and the limiting value of $s$ on $r=1$, the component of $\partial \nu$ that lies away from $\partial M^\circ$, is the degeneracy slope $s_1$ (recall from Construction \ref{const:blowup} that we assume $s_1<0$).
Let the slope of the given multicurve $\Gamma$ be $p \frac{\partial}{\partial \theta} - q \frac{\partial}{\partial z}$, where $p + qs_1 > 0$. 
Choose a complementary slope $-m \frac{\partial}{\partial \theta} + n \frac{\partial}{\partial z}$ so that $np-mq = 1$ and $m + ns_1 > 0$.
We make a change of coordinate $\psi = n \theta + m z$, $t = q \theta + p z$.
Then $X_1$ is a positive multiple of $\delta(r) \frac{\partial}{\partial \psi} + \frac{\partial}{\partial t}$, where $\delta(r):= \frac{s(r)n + m}{s(r) q+p}$ is positive and decreasing on $[r_0,1]$.

Meanwhile, the contact 1-form $\alpha_1$ is of the form $F(r) d\theta + G(r) dz$. 
Under this coordinate change, $\alpha_1$ can be written as $L(r) d\psi + K(r) dt$, where $L'(r) \delta(r) + K'(r) = 0$. 
We can now modify and extend $\alpha_1$ to $r \in (-\infty,r_0]$ so that $\alpha_1$ remains of the form $F(r) d\theta + G(r) dz$, and so that $\alpha_1 = (-\eta r+1) d\psi - r dt$ for $r \leq 0$, with $X_1 = \frac{\partial}{\partial \psi} - \eta \frac{\partial}{\partial t}$, 
for small $\eta > 0$.
See \Cref{fig:dividingsetslopes} for a summary of the slopes in this discussion.

\begin{figure}
    \centering
    \fontsize{8pt}{8pt}
    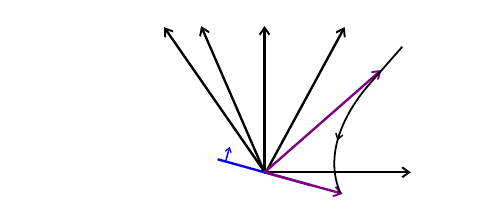
    \caption{Some slopes in \Cref{lem:dividingsetstdmodel}.}
    \label{fig:dividingsetslopes}
\end{figure}

We now construct a contact 1-form $\alpha_H$ on $[-1,0] \times T^2$ which we will use to glue onto $\alpha_1$ along $[-\epsilon,0] \times T^2$.
Consider the surface $A =  (\mathbb{R}/\mathbb{Z})_u \times [-1,0]_r$ equipped with the area form $\omega = du dr$, which has primitive $\beta = -r du$. 
For every function $H:A \to \mathbb{R}$, the Hamiltonian vector field $X_H$ on $A$ is defined by the equation $\iota_{X_H} \omega = dH$.
Cartan magic formula implies that $\mathcal{L}_{X_H} \beta = d(\iota_{X_H} \beta + H)$. 

Denoting by $\phi_\tau = (\phi^u_\tau,\phi^r_\tau): A \to A$ the time $\tau$-map of the flow of $X_H$, since $\phi_s^*\mathcal{L}_{X_H} \beta = \frac{d}{d\tau} \phi_\tau^*\beta|_{\tau=s}$, we get that 
\[
\phi_1^* \beta - \beta = \int_0^1 \frac{d}{d\tau} \phi_\tau^*\beta|_{\tau=s} ds = \int_0^1 \phi_s^*\mathcal{L}_{X_H} \beta ds = \int_0^1 d\phi^*_s\left(\iota_{X_H} \beta + H\right) ds = df,
\]
where $f:= \int_0^1 \phi^*_s\left(\iota_{X_H} \beta + H\right) ds$ is a function on $A$ (because integral of forms is a natural operation, see, e.g., \cite[Proposition 9.3.1]{MS17}).
We can now define a contact 1-form $\alpha_H = d\psi + \beta$ on the 3-manifold 
\begin{align*}
M_H &= \mathbb{R}_\psi \times A / (\psi,x) \sim (\psi-f(x),\phi_1(x)) \\
&= \left\{(\psi,u,r) \in \mathbb{R}_\psi \times A \mid \psi \in [0,f(u,r)]\right\}/ (f(u,r),u,r) \sim (0,\phi^u_1(u,r),\phi^r_1(u,r)),
\end{align*}
which has Reeb vector field $\frac{\partial}{\partial \psi}$.

Notice that while the coordinate $\psi$ descends to $M_H$, the same is not true for the coordinate $u$, since $\phi_1$ may not preserve $u$, i.e. the $u$-component $\phi^u_1(u,r)$ may not be equal to $u$.
To get around this, we set $t = (1-\frac{\psi}{f(u,r)}) u + \frac{\psi}{f(u,r)} \phi^u_1(u,r)$ on $\mathbb{R}_\psi \times (\mathbb{R}/\mathbb{Z})_u \times [-1,0]_r$.
The identification $(f(u,r),u,r) \sim (0,\phi^u_1(u,r),\phi^r_1(u,r))$ preserves $t$, so it descends to a function on $M_H$.

In this construction, if we take $H = \eta r$, then $\phi_1(u,r) = (u-\eta,r)$ and $df = 0$, so we can choose $f=1$,
in which case $M_H = \mathbb{R}_\psi \times (\mathbb{R}/\mathbb{Z})_u \times [-1,0]_r / (\psi,u,r) \sim (\psi-1,u-\eta,r)$ 
with $\alpha_H = d \psi - r du$.
Furthermore, $t = u - \eta \psi$, so we can rewrite
$\alpha_H = (-\eta r + 1) d \psi - r dt$.
On the other hand, if $(u_0,r_0)$ is a critical point of $H$, then it is a fixed point of $\phi_1$.
Along the closed curve $[0,f(u_0,r_0)] \times \{(u_0,r_0)\} / (f(u_0,r_0),u_0,r_0) \sim (0,u_0,r_0)$, $t$ is constant at $u_0$, and $r$ is constant at $r_0$, thus the curve is tangent to $\frac{\partial}{\partial \psi}$, which we recall is the slope of $\Gamma$.

Thus we take the function $H$ to have level sets as shown in \Cref{fig:dividingsetstdmodel}. 
More specifically, $H = \eta r$ in $r \in [-\epsilon,0]$, and $H$ has $N$ saddle critical points, where $2N$ is the multiplicity of $\Gamma$.
Within $r \in [-\epsilon,0]$, the contact forms $\alpha_1$ and $\alpha_2$ agree,
so we can glue the two together.
Near $r = -1$, we can find a convex surface $T$ by suspending the yellow curve in \Cref{fig:dividingsetstdmodel}. 
By truncating $M_H$ at $T$ then gluing to $M^\circ$, we obtain the desired contact form $\alpha_2$.

\begin{figure}
    \centering
    \begingroup%
  \makeatletter%
  \providecommand\color[2][]{%
    \errmessage{(Inkscape) Color is used for the text in Inkscape, but the package 'color.sty' is not loaded}%
    \renewcommand\color[2][]{}%
  }%
  \providecommand\transparent[1]{%
    \errmessage{(Inkscape) Transparency is used (non-zero) for the text in Inkscape, but the package 'transparent.sty' is not loaded}%
    \renewcommand\transparent[1]{}%
  }%
  \providecommand\rotatebox[2]{#2}%
  \newcommand*\fsize{\dimexpr\f@size pt\relax}%
  \newcommand*\lineheight[1]{\fontsize{\fsize}{#1\fsize}\selectfont}%
  \ifx\svgwidth\undefined%
    \setlength{\unitlength}{202.40436704bp}%
    \ifx\svgscale\undefined%
      \relax%
    \else%
      \setlength{\unitlength}{\unitlength * \real{\svgscale}}%
    \fi%
  \else%
    \setlength{\unitlength}{\svgwidth}%
  \fi%
  \global\let\svgwidth\undefined%
  \global\let\svgscale\undefined%
  \makeatother%
  \begin{picture}(1,0.66144968)%
    \lineheight{1}%
    \setlength\tabcolsep{0pt}%
    \put(0,0){\includegraphics[width=\unitlength,page=1]{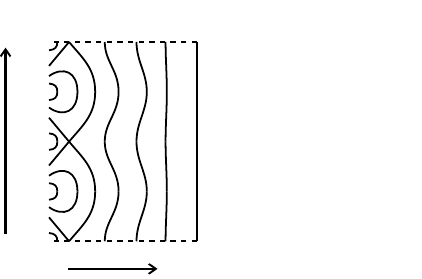}}%
    \put(0.13815301,0.62343013){\color[rgb]{0,0,0}\makebox(0,0)[lt]{\lineheight{1.25}\smash{\begin{tabular}[t]{l}$H$ (level set)\end{tabular}}}}%
    \put(-0.0023881,0.56988396){\color[rgb]{0,0,0}\makebox(0,0)[lt]{\lineheight{1.25}\smash{\begin{tabular}[t]{l}$u$\end{tabular}}}}%
    \put(0.39774422,0.00870873){\color[rgb]{0,0,0}\makebox(0,0)[lt]{\lineheight{1.25}\smash{\begin{tabular}[t]{l}$r$\end{tabular}}}}%
    \put(0,0){\includegraphics[width=\unitlength,page=2]{dividingsetstdmodel.pdf}}%
    \put(0.55900691,0.62343013){\color[rgb]{1,0.8,0}\makebox(0,0)[lt]{\lineheight{1.25}\smash{\begin{tabular}[t]{l}convex $\partial$\end{tabular}}}}%
    \put(0.81554824,0.62343013){\color[rgb]{0.50196078,0,0.50196078}\makebox(0,0)[lt]{\lineheight{1.25}\smash{\begin{tabular}[t]{l}$X_H$\end{tabular}}}}%
  \end{picture}%
\endgroup%

    \caption{Defining the gluing block $(M_H,\alpha_H)$.}
    \label{fig:dividingsetstdmodel}
\end{figure}

Within the blow-up region, the Reeb vector field $X_2$ is transverse to the fibrations given by $\psi$ and $t + 2 \eta \psi$ respectively.
The positive half-planes of these fibrations intersect in a proper cone, see the green co-vectors in \Cref{fig:dividingsetslopes}.
Thus \ref{item_patoreeb_nonullhomotopic}--\ref{item_patoreeb_lefschetz} for $X_2$ follow from the corresponding properties for $X_1$, which were established in \Cref{lem:patoshs}.
By construction, \ref{item_patoreeb_adapted} holds for $X_2$, and the contact form $\xi_2$ depends only on $\Gamma$ in a neighborhood of $\partial M^\circ$.
Finally, since $X_1$ is a blow up of $\phi^\circ$, $X_1$ is only degenerate within the blow-up regions, by construction, $X_2$ is degenerate only within the same regions, as well as the regions where the level sets of $H$ are closed curves parallel to the boundary components of $A$.
\end{proof}

Using the contact structure $\xi_2$, we can now prove \Cref{prop:patoreeb}.

\begin{proof}[Proof of \Cref{prop:patoreeb}]
Let $\alpha_2$ be the contact 1-form constructed in \Cref{lem:dividingsetstdmodel}.
We can do a generic perturbation supported within the regions where $X_2$ is degenerate, which are contained in the blow-up regions by the last sentence of \Cref{lem:dividingsetstdmodel}, in order to get a contact 1-form $\alpha$ for $\xi_2$ with Reeb flow $X$ that is nondegenerate, i.e., satisfies \ref{item_patoreeb_nondegenerate}.
See \cite[Theorem 13]{ABW10}. 

The flows $X_2$ and $X$ agree outside of blow-up regions. Within a blow-up region around a component of $\partial M^\circ$, the perturbed flow $X$ remains transverse to two fibrations, 
thus does not have nullhomotopic closed orbits.
Moreover, any closed orbit in such a blow-up region is homotopic to the corresponding element of $L$, unless if it is homotopic to the meridian.
Finally, any such closed orbit is peripheral, hence does not contribute towards $\mathcal{P}^\circ(X)$.

Similarly, within a blow-up region in the interior of $M^\circ$, $X$ remains transverse to the fibration by $z$ in \Cref{eq:blowupreebflow}, thus does not have nullhomotopic closed orbits.
Moreover, such closed orbits are homotopic to the corresponding orbit of $\phi$.
The sum of Lefschetz indices of closed orbits homotopic to the core of the blow-up region remains the same, thus is nonzero.
In particular, there exists such an orbit.
Any other closed orbits within such a blow-up region is nonprimitive, hence does not contribute towards $\mathcal{P}^\circ(X)$.

This analysis shows that \ref{item_patoreeb_nonullhomotopic}--\ref{item_patoreeb_lefschetz} for $X$ follow from the corresponding properties for $X_2$, which are established in \Cref{lem:dividingsetstdmodel}.

\ref{item_patoreeb_adapted} and \ref{item_patoreeb_stdform} both hold since the perturbation we did on $X_2$ to obtain $X$ is done away from the boundary components of $M^\circ$.

Finally, we will deduce \ref{item_patoreeb_nogirouxtorsion}:
Suppose for the sake of contradiction that $\xi$ has Giroux torsion along a torus $T \subset M^\circ$.
By the first part of \Cref{item_patoreeb_nonullhomotopic}, $\xi$ is hypertight, hence tight, so $T$ must be $\pi_1$-injective in $M^\circ$.
If $T$ is also $\pi_1$-injective in $M$, then by \Cref{prop_Girouxtorsion_implies_lotsorbits}, we have a subgroup $\mathbb{Z}^2 \leq \pi_1 M$ where for every $\gamma \in \mathbb{Z}^2$, either $\gamma$ or $\gamma^{-1}$ is freely homotopic to a closed orbit of $\phi$. This is impossible by \cite[Corollary 4.2.4]{BM25}.

On the other hand, note that if $T$ is not $\pi_1$-injective in $M$, then any element $g \in \ker(\pi_1 T \to \pi_1 M)$ must be a meridian of some element of $L$.
Indeed, by \Cref{prop_Girouxtorsion_implies_lotsorbits}, up to taking an inverse, $g$ is represented by a closed orbit of $X$, so the conclusion follows from the second part of \Cref{item_patoreeb_nonullhomotopic}.
In particular, this implies that $g$ lies in some peripheral subgroup of $\pi_1 M^\circ$.

We now divide into two cases:
Case 1 is if $\pi_1 T$ intersects a peripheral subgroup $P$ in a subgroup of rank one. In this case, consider the JSJ decomposition of $M^\circ$. The torus $T$ and the boundary component corresponding to $P$ must lie in the same Seifert piece $N$, and the intersection $\pi_1 T \cap P$ must be generated by the Seifert fiber of $N$.
But then the 3-manifold $M^\circ \cup \nu(\gamma)$ obtained by drilling $M$ at every orbit of $L$ except the orbit $\gamma$ corresponding to $P$ is reducible, contradiction.
Case 2 is if for every peripheral subgroup $P$ where $\pi_1 T \cap P$ is nontrivial, $\pi_1 T \cap P$ has rank two.
Since $M^\circ$ cannot be $T^2 \times I$, or else $M$ is a lens space (or $S^2 \times S^1$) and cannot admit a pseudo-Anosov flow, $\pi_1 T \cap P$ can only be nontrivial for one peripheral subgroup $P$, and $P$ is the peripheral subgroup associated to a unique boundary component.
In this case, $T$ is parallel to this boundary component. But by \Cref{item_patoreeb_inacone}, the peripheral orbits of $X$ stay in a proper cone, which contradicts \Cref{prop_Girouxtorsion_implies_lotsorbits}.\qedhere

\end{proof}

\section{Proof of main theorems}

\subsection{Finiteness of relatively strictly positive flows}

We are now ready to prove our main theorem.

\begin{thm} \label{thm:relposfinite}
Let $M$ be an oriented closed 3-manifold and let $L$ be a link in $M$.
Then up to orbit equivalence, there are only finitely many distinct pseudo-Anosov flows on $M$ that are strictly positive relative to $L$.

In fact, if $N$ is the number of Seifert pieces in the JSJ decomposition of $M$, then
the number of such orbit
equivalence classes is bounded from above by $2^N$ times the number of Giroux torsion free tight contact structures on $M^\circ = M \backslash \nu(L)$, up to diffeomorphisms isotopic to identity rel boundary, with a certain given boundary germ.

\end{thm}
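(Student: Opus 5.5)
Given a pseudo-Anosov flow $\phi$ on $M$ that is strictly positive relative to $L$, the plan is to attach to it a Giroux torsion free tight contact structure on $M^\circ$, and then show that the contact structure determines $\phi$ up to $2^N$ choices. The first step is to fix the boundary data so that it is independent of $\phi$. The degeneracy slopes on each component of $\partial M^\circ$ take only finitely many values, since they are slopes of the stable prong curves, but they do depend on $\phi$. I would therefore choose, for each of the finitely many possible degeneracy-slope patterns, a single essential multicurve $\Gamma$ whose slope differs from that pattern. Since the finiteness statement is insensitive to a bounded multiplicative factor, I could alternatively fix the boundary germ once and treat the degeneracy curves as part of the data. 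Applying \Cref{prop:patoreeb} then produces a contact structure $\xi_\phi$ on $M^\circ$ whose germ along $\partial M^\circ$ is standard and depends only on $\Gamma$ (item \ref{item_patoreeb_stdform}), which is adapted to $(M^\circ,\Gamma)$, hypertight by item \ref{item_patoreeb_nonullhomotopic} (hence tight by Hofer), and Giroux torsion free by item \ref{item_patoreeb_nogirouxtorsion}. By \Cref{thm:finiteness_contact_structure}, there are only finitely many such $\xi$ up to diffeomorphisms isotopic to the identity rel boundary (together with Dehn twists along tori).

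The core of the argument is to show that the isotopy class of $\xi_\phi$ rel boundary determines $\mathcal{P}^\circ(\phi^\circ)$. Take $g$ primitive and non-peripheral, and compute $HC(M^\circ,\alpha,g)$ using the nondegenerate hypertight form $\alpha$ from \Cref{prop:patoreeb}. Suppose first that $g\in\mathcal{P}^\circ(\phi^\circ)=\mathcal{P}^\circ(X)$. Then item \ref{item_patoreeb_lefschetz} says the sum of Lefschetz indices over orbits in class $g$ is nonzero. If this sum is finite (finitely many orbits), then $\chi(HC)\neq 0$. If it is $-\infty$, then all orbits in class $g$ are positive hyperbolic, so they all sit in even degree, the differential vanishes, and $HC\neq 0$. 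Conversely, if $g\notin\mathcal{P}^\circ(X)$, there are no orbits in class $g$, so $HC=0$. Hence $\mathcal{P}^\circ(\phi^\circ)=\{g : HC(M^\circ,\xi_\phi,g)\neq 0\}$, and this set is an invariant of $\xi_\phi$ by the invariance property of $HC$. Two flows with isotopic contact structures therefore have the same non-peripheral spectrum. They also have the same $\mathcal{C}=L$, since in both cases $L$ is the same set of orbits, up to orientation, which contributes only a finite factor.

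To conclude, apply \Cref{cor:nonperispecdeterminespaflow}. For transitive $\phi$ this recovers the isotopy equivalence class up to $2^s$ choices with $s\le N$. Non-transitive flows have a separating transverse torus, which gives a transverse surface disjoint from $L$; this contradicts strict positivity, so every flow under consideration is transitive. The remaining issue, which I expect to be the main obstacle, is the ambiguity coming from diffeomorphisms and Dehn twists along interior tori in \Cref{thm:finiteness_contact_structure}. Such a diffeomorphism $h$ carries the spectrum to $h_*$ of it, and we need this to still give only finitely many isotopy equivalence classes. For Dehn twists along interior incompressible tori, I would argue as in the Giroux torsion step: a flow whose spectrum is invariant under, or compatible with, the twist would have to contain many orbits in a $\mathbb Z^2$ subgroup, contradicting \cite[Corollary 4.2.4]{BM25}. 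Alternatively, since we only need orbit equivalence rather than isotopy equivalence, a twist changes the flow by a homeomorphism of $M$ (extending over $\nu(L)$), so $h_*$ of the spectrum is the spectrum of an orbit equivalent flow. This second route is the one I would pursue first, and it yields the stated bound of $2^N$ times the number of contact structures.
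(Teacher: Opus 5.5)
Your outline is the paper's proof. It applies \Cref{prop:patoreeb} on a fixed sutured manifold $(M^\circ,\Gamma)$, then Colin--Giroux--Honda finiteness, then cylindrical contact homology to read off $\mathcal{P}^\circ(\phi^\circ)$, then \Cref{cor:nonperispecdeterminespaflow}. Like the paper, you extend the comparing diffeomorphism (the identity near $\partial M^\circ$) over $\nu(L)$ and only conclude orbit equivalence, which is also how the paper absorbs the Dehn twists.

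The step that fails as written is the choice of $\Gamma$. Your claim that the degeneracy slopes along $L$ take only finitely many values is not justified by their being slopes of prong curves. The degeneracy slope records how the stable prongs twist around a component of $L$, and nothing in the hypotheses bounds this twisting as $\phi$ varies. Finiteness of degeneracy curves is exactly the deep input of \Cref{thm_finitelymanySingularityclass}, and it is available only for $L=\sing(\phi)$ on atoroidal $M$. Your fallback of fixing one germ does not address this either: \Cref{prop:patoreeb} cannot be applied to flows whose degeneracy slope coincides with that of $\Gamma$.

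The repair is cheap. You can argue by contradiction as the paper does, passing to an infinite subfamily on which a single $\Gamma$ avoids all degeneracy slopes. You can also use two slopes per boundary torus. Or note that the degeneracy slope $s_1\partial_\theta+\partial_z$ of \Cref{const:blowup} is never meridional, so a meridional $\Gamma$ works for every flow at once.

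There is also a smaller slip in the $-\infty$ case. It is not true that every orbit of $X$ in class $g$ is positive hyperbolic. $X$ is a perturbed solid blow-up, so $g$ can contain the elliptic core orbits of blow-up regions, and after the nondegenerate perturbation other index $+1$ orbits. This happens whenever a blown-up orbit lies in an infinite free homotopy class, and then the differential need not vanish. What does hold is that only finitely many generators have index $+1$, since they come from finitely many blow-up regions. So the differential on the infinite-dimensional index $-1$ part has infinite-dimensional kernel, while the image landing there is finite-dimensional, and hence $HC(M^\circ,\alpha,g)\neq 0$. This is how the paper uses property (1) of $HC$.

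Your observation that strict positivity forces transitivity is a worthwhile addition. A closed orbit cannot meet a separating transverse torus, so such a torus would be an almost transverse surface disjoint from $L$. \Cref{cor:nonperispecdeterminespaflow} requires transitivity, and the paper invokes it without comment.
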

\begin{proof}
Let $(\phi_i)$ be an infinite collection of such orbit equivalence classes of pseudo-Anosov flows. 
Up to restricting to an infinite subcollection,
we can pick an essential multicurve $\Gamma$ with slope different from all the degeneracy slopes of $\phi_i$.
Then we can apply \Cref{prop:patoreeb} to obtain the associated contact structures $\xi_i$ on the same sutured manifold $(M^\circ,\Gamma)$.

Let $X_i$ be the Reeb flow of $\alpha_i$ associated to $\phi^\circ_i$ given by Proposition \ref{prop:patoreeb}.
By the first part of item \ref{item_patoreeb_nonullhomotopic}, $\xi$ is hypertight and by item \ref{item_patoreeb_nogirouxtorsion}, $\xi$ has no Giroux torsion.
Also, by item \ref{item_patoreeb_stdform}, the contact structures $\xi_i$ all agree in a neighbourhood of $\partial M^\circ$. 
Thus, Colin--Giroux--Honda's finiteness result (Theorem \ref{thm:finiteness_contact_structure}) implies that 
up to taking a further subsequence, for each $i$, there exists a diffeomorphism $h_i\colon M^\circ \to M^\circ$ which is isotopic to the identity and satisfies $\xi_0 = h_i^*\xi_i$. 

This implies that the cylindrical contact homology of $\alpha_0$ and $h_i^*\alpha_i$ are the same. Moreover, item \ref{item_patoreeb_lefschetz} implies that for any $g\in \cP^\circ(X_i)$, the cylindrical contact homology $HC(M^\circ,\alpha_i,g) \neq 0$  (see Section \ref{sec:contacthomology}). Therefore, $\cP^\circ (X_0) = h_i^\ast \cP^\circ ( X_i )$.
By item \ref{item_patoreeb_samespectra}, we deduce that $\cP^\circ(\phi_0^\circ) = h_i^\ast\cP^\circ( \phi_i^\circ)$. 
Since $h_i$ is the identity on $\partial M^\circ$, we can extend $h_i$ to $M$, so that $h_i^\ast\cP^\circ( \phi_i^\circ) = \cP^\circ( (h_i^\ast\phi_i)^\circ)$.
Finally, Corollary \ref{cor:nonperispecdeterminespaflow} applies to show that some $h_i^\ast\phi_i$ and $h_j^\ast\phi_j$ are isotopically equivalent. This contradicts the assumption on the $\phi_i$.

In fact, this argument shows that there are at most as many such orbit equivalence classes of pseudo-Anosov flows as $2^N$ times the number of Giroux torsion free tight contact structures on $Q$, up to diffeomorphisms isotopic to identity rel boundary, with boundary germ given by that in \ref{item_patoreeb_stdform}.
\end{proof}

\subsection{Finiteness of veering triangulations}

In this subsection, we explain how to deduce finiteness of veering triangulations from \Cref{thm:relposfinite}.

Since it does not play a role in this paper, we will omit the definition of veering triangulations. 
It suffices to say that these are ideal triangulations $\Delta$ of (the interior of an) orientable compact 3-manifold $Q$ with torus boundary components satisfying certain combinatorial conditions. Furthermore, $\Delta$ has an associated, combinatorially defined, multicurve on each boundary component of $Q$, which we refer to as its \textbf{ladderpole} curve.
For more information see \cite[Chapter 1]{Tsathesis}.

The relevant fact to this paper is that there is a correspondence between veering triangulations and pseudo-Anosov flows, in the sense of \Cref{thm:correspondence} below.

\begin{thm}[Agol-Guéritaud, Schleimer-Segerman] \label{thm:correspondence}
Let $\mathcal{F}$ denote the set of triples $(M,\phi,\mathcal{C})$, where
\begin{itemize}
    \item $M$ is an orientable closed 3-manifold,
    \item $\phi$ is a pseudo-Anosov flow on $M$, and
    \item $\mathcal{C}$ is a collection of closed orbits of $\phi$ such that $\phi$ has no perfect fits relative to $\mathcal{C}$,
\end{itemize}
modulo isotopy equivalence.

Let $\mathcal{T}$ denote the set of triples $(Q,\Delta,s)$, where 
\begin{itemize}
    \item $Q$ is an orientable compact 3-manifold with torus boundary components,
    \item $\Delta$ is a veering triangulation on $Q$, and
    \item $s$ is a collection of slopes on the boundary components of $Q$ that intersects the ladderpole curve $l$ of $\Delta$ at least twice on each boundary component.
\end{itemize}
modulo isotopy.

Then:
\begin{enumerate}
    \item There exists a function $\mathsf{V}:\mathcal{F} \to \mathcal{T}$ such that 
    $$\mathsf{V}(M, \phi, \mathcal{C}) = (M \backslash \nu(\mathcal{C}), \Delta(N, \phi, \mathcal{C}), \text{meridians})$$
    Furthermore, the ladderpole curve at a boundary component of $M \backslash \nu(\mathcal{C})$ is the degeneracy curve at the corresponding orbit of $\mathcal{C}$.
    \item There exists a function $\mathsf{P}:\mathcal{T} \to \mathcal{F}$ such that 
    $$\mathsf{P}(Q, \Delta, s) = (Q(s), \phi(Q, \Delta, s), \text{cores of filling solid tori}).$$
    We refer to the cores of filling solid tori as the \textbf{filled orbits}. 
    The singular orbits of $\phi(Q, \Delta, s)$ must be among the filled orbits, and in fact the number of prongs at a filled orbit is the intersection number between $s$ and $l$ at the corresponding boundary component of $Q$.
    \item $\mathsf{V}$ and $\mathsf{P}$ are inverse to each other.
\end{enumerate}
\end{thm}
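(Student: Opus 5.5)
The plan is to assemble this correspondence from the existing literature rather than reprove it. The two directions are the Agol--Guéritaud construction for $\mathsf{V}$ and the Schleimer--Segerman link space / flow space construction for $\mathsf{P}$; the work is checking that the two are mutually inverse under the equivalence relations used in $\mathcal{F}$ and $\mathcal{T}$.

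For $\mathsf{V}$, start from $(M,\phi,\mathcal{C})$ and lift to the universal cover of $M\backslash\mathcal{C}$. Work in the orbit space $\mathcal{O}$ punctured at the lifts of $\mathcal{C}$, i.e.\ the orbit space of the boundary blow-up $\phi^{\circ\mathcal{C}}$ with boundary orbits collapsed.
\begin{enumerate}
\item Use the hypothesis that $\phi$ has no perfect fits relative to $\mathcal{C}$: every maximal rectangle in this punctured orbit space then has exactly one lift of $\mathcal{C}$ in the interior of each of its four sides.
\item Let each maximal rectangle span an ideal tetrahedron on those four punctures. Gluing along shared sub-rectangles gives a $\pi_1$-equivariant ideal triangulation of the universal cover of $M\backslash\mathcal{C}$.
\item Read off the veering condition from the stable/unstable combinatorics of the rectangles.
\end{enumerate}
For the boundary claims, note that the edges incident to a puncture in the link of a cusp are organized by the stable and unstable half-leaves through that puncture. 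The ladderpoles are therefore the fans between consecutive stable prongs, so the ladderpole curve is the degeneracy curve. The meridian intersects it $2k$ times if the orbit has $k$ prongs, which is at least twice since $k \ge 1$.

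For $\mathsf{P}$, given $(Q,\Delta,s)$, use Schleimer--Segerman's link space $\mathcal{L}(\Delta)$ and its upper/lower branched-surface foliations. These give a flow space, and hence a pseudo-Anosov flow on the Dehn filling $Q(s)$ with the cores as closed orbits. The requirement that $s$ meets $l$ at least twice on each boundary component is exactly what makes every filled orbit have at least one prong (one prong per two intersections, i.e.\ $\ge 2$ half-prongs), so no nontrivial degeneracy appears. By construction the filled orbits are the only possible singular orbits, with prong number equal to the intersection number of $s$ with $l$. No perfect fits relative to the filled orbits follows because the rectangles of the link space are cut by cusps.

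The main obstacle is item (3), that $\mathsf{V}$ and $\mathsf{P}$ are inverse. The direction $\mathsf{V}\circ\mathsf{P}=\mathrm{id}$ is combinatorial: the maximal rectangles of the flow space built from $\Delta$ are the tetrahedra of $\Delta$. For $\mathsf{P}\circ\mathsf{V}=\mathrm{id}$, I would show that the flow space recovered from $\Delta(M,\phi,\mathcal{C})$ is equivariantly homeomorphic to the original orbit space, preserving the two foliations. By the standard rigidity of pseudo-Anosov flows via their orbit space actions (as in the proof of \Cref{prop_finite_cover}), this gives isotopy equivalence after refilling with the meridians. The delicate point is handling the choice of isotopy and orientation conventions at the filled cusps so that the equivalence relations on $\mathcal{F}$ and $\mathcal{T}$ match.
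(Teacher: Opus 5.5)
Your overall route is the paper's: the paper gives no argument of its own for this theorem and simply defers to Agol--Gu\'eritaud and the Schleimer--Segerman papers, with Tsang's thesis as an alternative proof. The one piece of content you add beyond the citations is the prong bookkeeping in items (1)--(2), and that part is wrong.

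A pseudo-Anosov flow has no $1$-prong orbits. Every closed orbit has $k\ge 2$ prongs, and regular orbits are $2$-pronged. In the statement's convention, $l$ is the degeneracy multicurve, i.e.\ the trace on $\partial\nu$ of the $k$ stable half-leaves at the orbit. The meridian meets it exactly $k$ times, not $2k$. So the reason $\mathsf{V}$ lands in $\mathcal{T}$ is $k\ge 2$. In the $\mathsf{P}$ direction, the hypothesis that $s$ meets $l$ at least twice is exactly what excludes $1$-pronged filled orbits; it is not about guaranteeing ``at least one prong''. Under your count (two intersections per prong, $k\ge 1$ allowed), a slope meeting $l$ twice would produce a $1$-pronged filled orbit. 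Then $\phi(Q,\Delta,s)$ would not be pseudo-Anosov and $\mathsf{P}$ would not land in $\mathcal{F}$. Your closing claim that the prong number equals $\langle s,l\rangle$ also contradicts your own $2k$ count.

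The factor of two comes from conflating two multicurves. The cusp triangulation at a $k$-prong orbit has $2k$ ladders, one straddling each stable or unstable half-leaf. It also has $2k$ ladderpoles, one fan for each quadrant. A region between consecutive stable prongs contains two quadrants, hence two fans, so ``the fans between consecutive stable prongs'' is not the right description. The full geometric ladderpole multicurve therefore has the degeneracy slope but meets the meridian $2k$ times, whereas the stable degeneracy curve meets it $k$ times.

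You may use either multicurve, but consistently:
\begin{itemize}
\item with the degeneracy curve, the condition is $\langle s,l\rangle\ge 2$ and the prong number is $\langle s,l\rangle$, as stated;
\item with all geometric ladderpoles, the condition is $\langle s,l\rangle\ge 4$ and the prong number is $\langle s,l\rangle/2$.
\end{itemize}
The paper's later use of $\langle s,l\rangle\ge 3$ to force the filled orbits to be singular only makes sense in the first convention.
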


The proof of \Cref{thm:correspondence} will be contained in \cite{SS20, SS24, FSS25, SS23, SSpart5}.
See also \cite[Chapter 2]{Tsathesis} for an alternate proof.

Here, a pseudo-Anosov flow having no perfect fits relative to a collection of closed orbits is a generalization of the no perfect fits condition in \Cref{eg:noperfectfitimpliesbas}. For the purposes of this paper, it suffices to say that a pseudo-Anosov flow $\phi$ has no perfect fits if and only if it has no perfect fits relative to $\sing(\phi)$.

To deduce finiteness of veering triangulations, we need the following elementary lemma.

\begin{lem} \label{lem:singularfillingslopeexists}
Let $(l_n)$ be an infinite (possibly repeating) sequence of multicurves on a torus $T$. Then there exists a slope $s$ on $T$ and a subsequence $(l_{n_k})$ such that the intersection number between $s$ and $l_{n_k}$ is at least $3$ for each $k$.
\end{lem}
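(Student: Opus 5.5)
Identify $H_1(T;\mathbb{Z}) \cong \mathbb{Z}^2$. Each multicurve $l_n$ is a union of $c_n \ge 1$ parallel essential curves of a common slope $a_n \in \mathbb{Z}^2$, primitive and defined up to sign (inessential components do not affect geometric intersection numbers with slopes, so we discard them). For any slope $s$, the intersection number of $s$ with $l_n$ is $c_n\,|\det(s,a_n)| \ge |\det(s,a_n)|$. It therefore suffices to find a slope $s$ and a subsequence along which $|\det(s,a_{n_k})| \ge 3$. The plan is to split according to whether the sequence of slopes $(a_n)$ takes finitely or infinitely many values.

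\emph{Finitely many values.} By pigeonhole, some slope $a$ occurs for infinitely many $n$; pass to that subsequence. Complete $a$ to a basis $(a,b)$ of $\mathbb{Z}^2$ and take $s = 3b + a$. This vector is primitive, since $\gcd(1,3)=1$ in the basis $(a,b)$. Then $|\det(s,a)| = 3|\det(b,a)| = 3$, which gives the required bound along the subsequence.

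\emph{Infinitely many values.} Pass to a subsequence of pairwise distinct slopes $a_{n}$, which we may take to be primitive vectors. Fix any two slopes $s_1 = (1,0)$ and $s_2 = (0,1)$; more generally we could choose a small fixed finite set of slopes. For $a = (x,y)$ we have $|\det(s_1,a)| = |y|$ and $|\det(s_2,a)| = |x|$. There are only finitely many primitive vectors up to sign with $\max(|x|,|y|) \le 2$, so all but finitely many $a_n$ satisfy $|x_n| \ge 3$ or $|y_n| \ge 3$. Passing again to an infinite subsequence on which one of the two alternatives always holds, we take $s = s_2$ in the first case and $s = s_1$ in the second.

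The only point needing care is the reduction to slopes: we must justify that the intersection number is $c_n|\det(s,a_n)|$ and treat multicurves with possibly inessential components. I expect this to be routine, granted the convention that multicurves on $T$ are essential, as holds for ladderpole curves.
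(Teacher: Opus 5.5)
Your proof is correct, but it is organized differently from the paper's.

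\textbf{The paper's route.} It avoids any case split. Writing $l_n = a_n\lambda + b_n\mu$, it tests every $l_n$ against the fixed slopes $\pm 3\lambda\pm\mu$ and $\pm\lambda\pm 3\mu$. If all of these met $l_n$ at most twice, the two families would force both $3|a_n|+|b_n|\le 2$ and $|a_n|+3|b_n|\le 2$, so $l_n=0$. The paper phrases this as a contradiction. Its content is that every nonzero multicurve meets one of these four slopes at least three times, and pigeonhole then gives $s$ and the subsequence at once.

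\textbf{Your route.} You split according to whether the slopes $a_n$ recur. In the recurring case you use a slope $3b+a$ adapted to the repeated slope. In the non-recurring case you use only the two coordinate slopes, together with the finiteness of slopes whose coordinates are both at most $2$ in absolute value.

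\textbf{Comparison.} Both arguments rest on the same formula $i(s,l_n)=c_n|\det(s,a_n)|$. Both also need the multicurves to be essential, i.e. nonzero in homology; the paper uses this implicitly as well. The paper's version is shorter and slightly stronger: $s$ can always be drawn from one finite list that does not depend on the sequence. Yours makes the bounded and unbounded regimes explicit, at the cost of a case split and, in the recurring case, a choice of $s$ that depends on the sequence.
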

\begin{proof}
Pick a basis $\{\lambda, \mu\}$ on $T$, and write $l_n = a_n \lambda + b_n \mu$.
Assume the lemma is false, that is, for every slope $s$, all but finitely many $l_n$ intersect $s$ at most $2$ times.
Then applying this to $s = \pm 3 \lambda \pm \mu$ shows that $3|a_n| + |b_n| \leq 2$ for all but finitely many $n$.
That is, for all but finitely many $n$, $l_n = \mu$.
But similarly, applying this to $s = \pm \lambda \pm 3 \mu$ shows that for all but finitely many $n$, $l_n = \lambda$. Contradiction.
\end{proof}

\begin{thm}\label{thm_finiteness_veering}
Let $Q$ be an orientable compact 3-manifold with torus boundary components. Then there are at most finitely many veering triangulations on $Q$ up to isotopy.

In fact, the number of such isotopy classes is bounded from above by the number of Giroux torsion free tight contact structures on $Q$, up to diffeomorphisms isotopic to identity rel boundary, with a certain given boundary germ.
\end{thm}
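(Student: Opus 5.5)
Suppose for contradiction that $Q$ carries infinitely many pairwise non-isotopic veering triangulations $\Delta_n$. The plan is to fill $Q$ along a single slope to get one closed manifold carrying infinitely many flows, and then apply \Cref{thm:relposfinite}.

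\textbf{Step 1: choose a common filling slope.} Each $\Delta_n$ has a ladderpole multicurve $l_n^T$ on each boundary torus $T$ of $Q$. Apply \Cref{lem:singularfillingslopeexists} one boundary component at a time, passing to a subsequence each time. This gives slopes $s$, one on each component of $\partial Q$, such that $s$ meets every $l_n$ at least $3$ times on every component. In particular, for every $n$ the triple $(Q,\Delta_n,s)$ lies in $\mathcal{T}$.

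\textbf{Step 2: pass to flows on a fixed closed manifold.} By \Cref{thm:correspondence}, $\mathsf{P}(Q,\Delta_n,s)=(M,\phi_n,\mathcal{C}_n)$, where $M=Q(s)$ is the same closed manifold for all $n$. The filled orbits $\mathcal{C}_n$ are the cores of the filling solid tori, so they form the same link $L$ for all $n$. Since $s$ meets $l_n$ at least $3$ times on each component, every filled orbit has at least $3$ prongs. Also, \Cref{thm:correspondence} says singular orbits are among the filled orbits, so $\sing(\phi_n)=L$.

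Each $\phi_n$ has no perfect fits relative to $\mathcal{C}_n=\sing(\phi_n)$, so $\phi_n$ has no perfect fits. $\phi_n$ is not a suspension of an Anosov diffeomorphism, because it has singular orbits. By \Cref{eg:noperfectfitimpliesbas}, $\phi_n$ is therefore strictly positive relative to $\sing(\phi_n)=L$. Orienting $L$ as the orbits of $\phi_n$ requires a little care. Since there are only finitely many orientations of the components of $L$, I would pass to a subsequence on which these orientations agree.

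\textbf{Step 3: apply the main theorem and transfer back.} By \Cref{thm:relposfinite}, only finitely many orbit equivalence classes occur among the $\phi_n$. The proof there actually gives more: diffeomorphisms $h_i$ isotopic to the identity rel $\partial M^\circ$ (here $M^\circ=Q$) with $h_i^*\phi_i$ isotopically equivalent to $h_j^*\phi_j$. So infinitely many $\phi_n$ are isotopically equivalent via maps preserving $L$ and isotopic to the identity on $Q$.

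Because $\mathsf{V}$ and $\mathsf{P}$ are inverse bijections modulo isotopy, $\mathsf{V}$ then returns isotopic triangulations $\Delta_n$ on $Q$, contradicting pairwise non-isotopy. The bound follows in the same way. Since $\sing(\phi_n)=L$ has no $T^2$-fibred pieces adjacent to flows without perfect fits, the manifold is atoroidal and there are no scalloped Seifert pieces, so the factor $2^N$ becomes $1$. The boundary germ is the one from \Cref{prop:patoreeb}\ref{item_patoreeb_stdform} for the chosen $\Gamma$.

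\textbf{Main obstacle.} The subtle step is Step 3: making sure that orbit equivalence of the $\phi_n$ really upgrades to isotopy of the triangulations on the fixed $Q$. The isotopy equivalences must fix $L$ setwise and be isotopic to the identity on $Q$, not merely on $M$. I would handle this by invoking the argument inside the proof of \Cref{thm:relposfinite}, where $h_i$ is isotopic to the identity rel boundary on $M^\circ$, rather than citing only its statement.
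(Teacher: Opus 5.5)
Your proposal is correct and is essentially the paper's proof: choose common filling slopes via \Cref{lem:singularfillingslopeexists}, use \Cref{thm:correspondence} to get flows on the fixed manifold $Q(s)$ whose filled orbits are exactly their singular orbits (hence without perfect fits), invoke \Cref{eg:noperfectfitimpliesbas} for strict positivity relative to this fixed link, and contradict \Cref{thm:relposfinite}. Two caveats. First, your Step~3 claim that the equivalences are isotopic to the identity on $Q$ does not follow from the $h_i$, because the last step, through \Cref{cor:nonperispecdeterminespaflow}, only gives isotopy in $M$; it is also not needed, since passing back to triangulations on $Q$ is exactly what \Cref{thm:correspondence} provides. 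Second, for the precise bound the paper appeals to hyperbolicity of $Q$ \cite{HRST11}, whereas your atoroidality remark should simply cite that non-suspension flows without perfect fits only live on atoroidal manifolds.
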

\begin{proof}
Suppose otherwise that there is an infinite sequence of non-isotopic veering triangulations $(\Delta_n)$ on $M^\circ$. Applying \Cref{lem:singularfillingslopeexists} to the corresponding sequence of ladderpole curves on each boundary component, and passing to a subsequence, we get a collection of slopes $s$ on the boundary components of $M^\circ$ so that $s$ intersects the ladderpole curves of $\Delta_n$ at least three times on each boundary component, for each $n$.

By \Cref{thm:correspondence}, we then get an infinite sequence of non-isotopically equivalent pseudo-Anosov flow $\phi_n = \mathsf{P}(M^\circ, \Delta_n, s)$ on $M^\circ(s)$ that have no perfect fits relative to the collection of filled orbits. 
But by our intersection number condition, the filled orbits are exactly the singular orbits, thus each $\phi_n$ has no perfect fits.

By \Cref{eg:noperfectfitimpliesbas}, the flows $\phi_i$ are strictly positive relative to the singular orbits, thus to the filled orbits. The isotopy class of the latter only depends on $M^\circ$ and $s$, and not on the individual flows $\phi_i$. 
Thus we get a contradiction to \Cref{thm:relposfinite}. 

For the precise bound, we need the fact that a manifold admitting a veering triangulation must be hyperbolic, which is shown in \cite{HRST11}.
\end{proof}

\subsection{Finiteness of BAS flows}

In this subsection, we explain how \Cref{thm:relposfinite}, when combined with a result of Li, shows finiteness of pseudo-Anosov flows with no perfect fits.
In fact, we get something slightly stronger.

\begin{defn} \label{defn:BAS}
We say that a pseudo-Anosov flow $\phi$ is \textbf{$+$BAS} %
(\textbf{positive} \textbf{B}irkhoff section \textbf{a}way from \textbf{s}ingularities) if it is positive relative to its singular orbits.
Similarly, we say that a pseudo-Anosov flow $\phi$ is \textbf{$-$BAS} if it is negative relative to its singular orbits.
We say that $\phi$ is \textbf{BAS} if it is $+$BAS or $-$BAS.
\end{defn}

\begin{thm}[{\cite{Li26}}] \label{thm_finitelymanySingularityclass}
For a given atoroidal 3-manifold $M$, there are at most finitely many possibilities for the isotopy class of $\sing(\phi)$ and the degeneracy curves for a pseudo-Anosov flow $\phi$ on $M$.
\end{thm}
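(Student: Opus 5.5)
The plan is a normal-form/compactness argument that replaces the flow by its stable lamination and pushes everything into a fixed triangulation $\tau$ of $M$. If $\phi$ is Anosov there is nothing to prove, since $\sing(\phi)=\emptyset$; so assume $\sing(\phi)\neq\emptyset$.

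\textbf{Step 1: pass to an essential lamination.} Blow up the singular leaves of $\mathcal{F}^s$ to obtain a stable lamination $\Lambda^s$. By Gabai--Oertel (see also \cite{BM25}) this is an essential lamination. Its complementary regions are exactly the solid tori $P_k\times S^1$, where $P_k$ is an ideal $k$-gon (possibly glued with a rotation). There is one such region for each singular orbit $\gamma$, with $k$ the number of prongs, and:
\begin{itemize}
    \item the core of the region is isotopic to $\gamma$;
    \item the $k$ cusp circles of the region, pushed onto the boundary of a tubular neighbourhood, are isotopic to the degeneracy curve at $\gamma$.
\end{itemize}
So it suffices to show that the isotopy classes of complementary regions of $\Lambda^s$, together with their cusp curves, range over a finite set as $\phi$ varies.

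\textbf{Step 2: normal form and finitely many branched surfaces.} Isotope $\Lambda^s$ into Kneser--Haken normal form with respect to $\tau$; this is possible for essential laminations by Gabai's/Brittenham's normalization. Every leaf then meets each tetrahedron in normal triangles and quadrilaterals. Collapse parallel normal disks of the same type into a single sector. This produces a branched surface $B$ that fully carries $\Lambda^s$ and that is built from at most one copy of each of the finitely many normal disk types of $\tau$, glued along normal arcs. Hence $B$ lies in a finite list $B_1,\dots,B_m$ depending only on $\tau$, i.e.\ only on $M$.

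\textbf{Step 3: complementary regions of $\Lambda^s$ versus those of $B$.} Each complementary region of $\Lambda^s$ decomposes as a complementary region of $N(B)$ together with pieces of the $I$-bundle $N(B)$ lying between adjacent leaves. The interstitial pieces are product regions, and I expect this to be the main obstacle: one must show they do not change the isotopy class of the core solid torus or of its cusp curves. A priori an interstitial $I$-bundle could wrap around a torus or annulus, so that the same $B_i$ carries laminations whose singular regions differ by twisting.

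This is exactly where atoroidality enters. My first attempt would be:
\begin{itemize}
    \item an interstitial region that is not a $D^2$-bundle over an arc must contain an essential annulus or torus in the complement of the singular orbits;
    \item an essential torus is excluded by atoroidality;
    \item an essential annulus would join two singular orbits, or a singular orbit to itself, along a leaf, producing a $\mathbb{Z}^2$ or freely homotopic orbits, which is ruled out via \cite[Corollary 4.2.4]{BM25}-type arguments.
\end{itemize}
Granting this, each complementary region of $\Lambda^s$ deformation retracts, by an isotopy, onto a region determined by the combinatorics of some $B_i$ (which cusped complementary components of $N(B_i)$ are used, and how). There are finitely many such choices.

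\textbf{Step 4: conclusion.} Finally, read off the core and the cusp circles of that region; these give the isotopy classes of $\sing(\phi)$ and its degeneracy curves. Finiteness of $\{B_i\}$ then yields the theorem.
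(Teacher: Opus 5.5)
The paper does not prove this statement. It is quoted from \cite{Li26}, and the only hint about its proof is the closing remark that the atoroidal hypothesis ``ultimately traces back to the failure of Gabai's finiteness theorem for essential branched surfaces in the toroidal case.'' Your outline is in that spirit, and Step 1 is fine. But there is a genuine gap, and it sits exactly where the content of the theorem lies.

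\textbf{Step 2 gives no finiteness information.} It produces a finite list $B_1,\dots,B_m$ in \emph{every} closed 3-manifold, toroidal or not.
\begin{itemize}
\item The branched surface obtained by collapsing normal disks of the same type is in general not essential.
\item Components of $M\setminus N(B)$ need not correspond to complementary regions of $\Lambda^s$. A single region $U_\gamma$ is typically assembled from several of them, joined through components of $N(B)\setminus\Lambda^s$.
\item Among those components are $I$-bundles over noncompact subsurfaces of $B$. The cusps of $U_\gamma$ run out along the stable half-leaves of $\gamma$, which are noncompact, and dense when $\phi$ is transitive.
\end{itemize}
How these $I$-bundles wind through $N(B)$ is not recorded by the combinatorics of $B$. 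Neither, therefore, is the isotopy class of the core of $U_\gamma$ or of its cusp curves.

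For instance, on a toroidal manifold, pull a flow back by powers of a Dehn twist along an essential torus crossed by a singular orbit. This typically gives singular links in infinitely many isotopy classes. Yet by your Step 2 and pigeonhole, infinitely many of the resulting laminations are carried by the same $B_i$. So the sentence ``Granting this, each complementary region \dots\ retracts onto a region determined by the combinatorics of some $B_i$'' is essentially the statement to be proved, not a consequence of Steps 1--2.

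\textbf{The argument you offer for Step 3 does not supply it.}
\begin{itemize}
\item You assert without proof that an interstitial region which is not a $D^2$-bundle over an arc contains an essential annulus or torus in $M\setminus\sing(\phi)$. An $I$-bundle over a noncompact surface wrapping around $B$ does not evidently produce any closed essential surface.
\item Even granting that claim, atoroidality of $M$ does not exclude essential tori in $M\setminus\sing(\phi)$. A torus can be compressible in $M$ but incompressible and non-peripheral in the link complement.
\item The annulus case misapplies \cite[Corollary 4.2.4]{BM25}. That result forbids a $\mathbb{Z}^2$ subgroup all of whose elements, up to inverse, are represented by closed orbits. It says nothing against free homotopies between distinct closed orbits, which are common even on hyperbolic manifolds (corners of lozenges joined by Birkhoff annuli).
\item In the Dehn-twist example above, the torus responsible for non-finiteness is crossed by $\sing(\phi)$. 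This suggests that excluding tori disjoint from $\sing(\phi)$ does not address the actual failure mode.
\end{itemize}

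What is missing is a genuine finiteness theorem for \emph{essential} branched surfaces in atoroidal manifolds. Such a theorem would produce finitely many branched surfaces carrying the stable laminations so that the complementary regions (the cusped solid tori around $\sing(\phi)$) are determined up to isotopy by the branched surface. This is the role the paper's remark assigns to Gabai's theorem. You would then still need an argument that the cores and cusp curves can be read off from that data.
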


\begin{thm}\label{thm:finitenessBAS}
Let $M$ be a closed oriented atoroidal manifold. Up to isotopy equivalence, there are only finitely many distinct BAS pseudo-Anosov flows on $M$.
\end{thm}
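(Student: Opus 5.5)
The plan is to reduce to \Cref{thm:relposfinite} by fixing the link $L$ and the boundary data using Li's \Cref{thm_finitelymanySingularityclass}. Suppose for contradiction that there is an infinite family $(\phi_i)$ of pairwise non-isotopically-equivalent BAS pseudo-Anosov flows on $M$. After passing to an infinite subfamily and possibly reversing the orientation of $M$, we may assume they are all $+$BAS. Reversing orientation swaps positive and negative boundary components of Birkhoff sections, so a $-$BAS flow on $M$ is a $+$BAS flow on $-M$; we then run the argument on $-M$. By \Cref{thm_finitelymanySingularityclass}, after passing to a further subfamily, $\sing(\phi_i)$ is a fixed isotopy class of link, and the degeneracy curves are fixed as well. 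Conjugating each $\phi_i$ by a diffeomorphism isotopic to the identity, we may assume $\sing(\phi_i)=L$ is literally the same link for all $i$, with orbit orientations chosen compatibly. Isotopy equivalence classes are unchanged by this conjugation.

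Next we check that each $\phi_i$ is \emph{strictly} positive relative to $L$. Positivity relative to $L$ is exactly the $+$BAS hypothesis. For strictness, let $S$ be an almost transverse surface that is disjoint from or tangent to $L=\sing(\phi_i)$. As in \Cref{eg:noperfectfitimpliesbas}, $S$ is then transverse to the flow away from any singularity. The stable foliation therefore induces a nonsingular line field on $S$, so every component of $S$ is a torus. Such a torus is transverse to a pseudo-Anosov flow, hence incompressible. This contradicts atoroidality of $M$, unless $\phi_i$ is a suspension of an Anosov diffeomorphism, which also cannot occur on an atoroidal manifold. If $L$ is empty, $\phi_i$ is Anosov with a positive Birkhoff section, and the same argument shows it has no transverse surface. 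So every almost transverse surface meets $L$ transversely.

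\Cref{thm:relposfinite} now gives only finitely many orbit equivalence classes among the $\phi_i$. To upgrade to isotopy equivalence, I would look inside the proof of \Cref{thm:relposfinite}: it actually produces diffeomorphisms $h_i$ isotopic to the identity rel boundary on $M^\circ$, up to Dehn twists along tori in $\intr M^\circ$ coming from \Cref{thm:finiteness_contact_structure}. I expect this to be the main obstacle. Atoroidality of $M$, together with the fact that $M^\circ$ is the complement of the singular orbits, should give only finitely many isotopy classes of such Dehn twists that matter after extending over $M$. Alternatively, one could argue that each orbit equivalence class splits into finitely many isotopy equivalence classes, using finiteness of $\mathrm{Out}$-type mapping class groups of atoroidal (hyperbolic or small Seifert) $M$. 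In the Seifert case one would need the scalloped-piece count $2^N$ from \Cref{cor:nonperispecdeterminespaflow}. Either route gives finitely many isotopy equivalence classes, contradicting the infinitude of the family.
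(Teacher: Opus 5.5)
Your proposal is correct and follows the paper's proof essentially step for step: pass to $+$BAS by reversing the orientation of $M$, use Li's theorem (\Cref{thm_finitelymanySingularityclass}) to fix $\sing(\phi_i)=L$, get strict positivity because an almost transverse surface disjoint from or tangent to $\sing(\phi_i)$ would be a torus in an atoroidal $M$, and then invoke \Cref{thm:relposfinite}. The paper does not treat the orbit-versus-isotopy upgrade you flag as an obstacle: the maps $h_i$ in the proof of \Cref{thm:relposfinite} are isotopic to the identity, so that proof already yields isotopy equivalence. Your route (b), via finiteness of the mapping class group of a closed hyperbolic or small Seifert fibered $M$, also settles it, and the scalloped-piece count is irrelevant there; route (a) works too, since in an irreducible atoroidal $M$ every torus bounds a solid torus or lies in a ball, so Dehn twists along it are isotopic to the identity in $M$.
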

\begin{proof}
Let $(\phi_i)$ be an infinite collection of pairwise isotopically inequivalent BAS pseudo-Anosov flows on $M$. Up to considering an infinite subfamily and/or switching the orientation of the manifold, we may assume that they are all +BAS.
By Li's finiteness result (Theorem \ref{thm_finitelymanySingularityclass}), up to taking a further subsequence, we can assume that all the $\sing(\phi_i)$ are the same link $L$.

By definition, each $\phi_i$ is positive relative to $L$.
We further claim that they are strictly positive.
Indeed, since $L = \sing(\phi_i)$, any almost transverse surface must be a torus, which contradicts the atoroidal assumption.

Thus we have an infinite collection of pairwise isotopically inequivalent pseudo-Anosov flows on $M$ that are strictly positive relative to $L$, contradicting \Cref{thm:relposfinite}.
\end{proof}

\begin{rmk}
Essential tori are a common tripping hazard in this paper; sometimes we can step over them carefully, and other times they force us to take longer detours. \Cref{thm_finitelymanySingularityclass} requires the 3-manifold to be atoroidal, and this ultimately traces back to the failure of Gabai's finiteness theorem for essential branched surfaces in the toroidal case.    
The assumption that $M$ is atoroidal is not sufficient to rule out Giroux torsion; we also need to rule out boundary-parallel Giroux torsion in $M^\circ$. 
Finally, the assumption that $M$ is atoroidal is sufficient to rule out almost transverse surfaces in $M\setminus \sing(\phi)$.
\end{rmk}

We can now deduce finiteness of pseudo-Anosov flows with no perfect fits.

\begin{thm}\label{thm:noperfectfitfinite}
Let $M$ be a closed 3-manifold. Then there are at most finitely many pseudo-Anosov flows without perfect fits on $M$ up to isotopy equivalence.
\end{thm}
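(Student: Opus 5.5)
The plan is to reduce to \Cref{thm:finitenessBAS} via \Cref{eg:noperfectfitimpliesbas}. We must check that $M$ can be taken oriented and atoroidal, and handle suspensions separately.

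First, the orientation. If $M$ is non-orientable, pass to the orientation double cover $\hat M$. A flow without perfect fits lifts to a flow without perfect fits, since the orbit space is unchanged. Suppose we had infinitely many pairwise isotopically inequivalent such flows on $M$. Their lifts to $\hat M$ would lie in finitely many isotopy equivalence classes once the orientable case is settled. Two flows with isotopically equivalent lifts are themselves isotopically equivalent by \Cref{prop_finite_cover}; this needs transitivity, which holds for flows without perfect fits since they live on atoroidal manifolds (or are suspensions), where pseudo-Anosov flows are transitive. So we may assume $M$ is oriented.

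Second, split into two cases.

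\emph{Suspension case.} If some flow is the suspension of an Anosov diffeomorphism, then $M$ is a torus bundle. Any pseudo-Anosov flow without perfect fits on $M$ is then a suspension, because a non-suspension flow without perfect fits forces $M$ to be atoroidal (\cite[Chapter 5]{BM25}). Suspension flows on a fixed torus bundle are finite up to isotopy equivalence: the flow is determined by the fibration class, the cross sections are fibers, and the fibration of a Sol manifold is unique up to isotopy and orientation. I will only sketch this standard point.

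\emph{Non-suspension case.} By \cite[Chapter 5]{BM25}, $M$ is atoroidal. By \Cref{eg:noperfectfitimpliesbas}, each flow without perfect fits is strictly positive, in particular $+$BAS, relative to its singular orbits. \Cref{thm:finitenessBAS} then gives finiteness up to isotopy equivalence directly.

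The main obstacle is the bookkeeping in the non-orientable reduction, specifically checking the transitivity hypothesis of \Cref{prop_finite_cover}. The suspension case is a side issue and can be settled by classical arguments.
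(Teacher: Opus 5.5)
Your proposal is correct and follows essentially the same route as the paper. Both reduce to the orientable case via \Cref{prop_finite_cover}, and both handle the suspension case by the classical rigidity of flows on Sol torus bundles; the paper cites \cite[Theorem 5.7]{BF13} where you sketch the argument. In the remaining case both combine atoroidality with \Cref{eg:noperfectfitimpliesbas} and \Cref{thm:finitenessBAS}, and your explicit check of the transitivity hypothesis in \Cref{prop_finite_cover} fills in a detail the paper leaves implicit.
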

\begin{proof}
Suppose there exist infinitely many pairwise non isotopically equivalent pseudo-Anosov flows $\phi_i$ on a $3$-manifold $M$. By Proposition \ref{prop_finite_cover}, we can assume that $M$ is orientable. First let us assume that one of the $\phi_i$ is a suspension of an Anosov diffeomorphism, then $M$ is the mapping torus of some hyperbolic matrix $A\in SL(2,\mathbb R)$, and it is already known that the only pseudo-Anosov flows on $M$ are isotopically equivalent to the suspension of $A$ or $A^{-1}$ (see \cite[Theorem 5.7]{BF13}.\footnote{Note that the statement in \cite[Theorem 5.7]{BF13} only says orbit equivalent, but one can verify that the orbit equivalence can be chosen isotopic to the identity.})

So we can now assume that all the $\phi_i$ are not Anosov flows. 
By Example \ref{eg:noperfectfitimpliesbas} all the $\phi_i$ are BAS flows. Furthermore, (non-Anosov) pseudo-Anosov flows without perfect fits only exist on atoroidal $3$-manifolds (see, e.g. \cite[Chapter 5]{BM25}). By Theorem \ref{thm:finitenessBAS} infinitely many of the $\phi_i$ must be isotopically equivalent, contradicting the assumption.
\end{proof}

\section{Discussion and further questions} \label{sec:questions}

\subsection{Orbit space characterization of BAS}

It is unclear to us how large the class of BAS pseudo-Anosov flows are.
There certainly exists non-BAS flows, which we show using the following obstruction:

\begin{prop} \label{prop:+BASimpliesnonegativepartialsection}
Let $\phi$ be a $+$BAS pseudo-Anosov flow. Then $\phi$ does not admit a strictly negative partial Birkhoff section that is disjoint from the singular orbits of $\phi$.
\end{prop}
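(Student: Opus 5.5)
The plan is to argue by contradiction using the stable Hamiltonian structure of \Cref{lem:patoshs}, in the relative setting $L=\sing(\phi)$, together with a Stokes computation. Here \emph{strictly negative} is read as: all boundary components of the partial Birkhoff section are negative, and there is at least one of them. Suppose $\phi$ is $+$BAS and $T$ is a strictly negative partial Birkhoff section disjoint from $\sing(\phi)$. Since $\phi$ is positive relative to $L=\sing(\phi)$, the construction in the proof of \Cref{lem:patoshs} still applies; we only use items \ref{item_patoshs_blowup} and \ref{item_patoshs_dlambda}, not the exactness item \ref{item_patoshs_zerohomology}, so strict positivity is not needed. This gives a stable Hamiltonian structure $(\omega,\lambda)$ on $M^\circ=M\backslash\nu(\sing(\phi))$ whose Reeb field $X_1$ is a solid blow-up $\phi^\circ_{\bullet\mathcal{C}}$, with $d\lambda=f\omega$ for some $f\ge 0$. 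We take $\nu(\sing(\phi))$ small enough to be disjoint from $T$, so that $T\subset M^\circ$.

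The key steps, in order, are as follows.
\begin{enumerate}
\item Transport $T$ to a partial Birkhoff section of $X_1$. Away from the blow-up regions of $\mathcal{C}$ the flows agree, so nothing changes there. Near an orbit $\gamma\in\mathcal{C}$ that $T$ crosses in its interior, isotope $T$ along the flow so that it meets the blow-up region in meridian discs transverse to the $z$-fibration of \Cref{eq:blowupreebflow}. Near a boundary orbit $c$ of $T$ lying in $\mathcal{C}$, use the freedom in choosing $\mathcal{D}$ (it need only contain $L\cup\partial S\cup\sing(\phi)$) to exclude the components of $\partial T$ whenever possible. For the remaining ones, namely components of $\partial T\cap\partial S$, arrange that $T$ ends on a closed orbit of $\partial\nu_c$ or on the core, keeping the same sign. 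The outcome is a surface $T'$ with embedded interior positively transverse to $X_1$ and boundary covering closed orbits of $X_1$, all negative.
\item Observe that $\omega|_{T'}>0$ on the interior of $T'$. Indeed $\iota_{X_1}\omega=0$ and $\lambda\wedge\omega>0$, so $\omega$ is nondegenerate on any plane transverse to $X_1$, with sign given by the coorientation. Hence $\int_{T'}d\lambda=\int_{T'}f\omega\ge 0$.
\item Apply Stokes. We get $\int_{T'}d\lambda=\int_{\partial T'}\lambda$. Each boundary component covers a closed orbit of $X_1$, with $\lambda(X_1)=1$, traversed against the flow because it is negative. So each contributes minus a positive multiple of the period, and the boundary integral is strictly negative since $\partial T'\neq\emptyset$. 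This contradicts step 2.
\end{enumerate}

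The main obstacle is step 1: making $T$ compatible with the blow-up at orbits of $\partial S$ that are also boundary orbits of $T$, and controlling the boundary behaviour there. Near such orbits $T$ is typically not tangent to a ruled model and may spiral. The first thing I would try is to avoid the issue by integrating over the truncated surface $T\cap M_{\nu_0}$ with the mapping-torus forms $\omega_{\nu_0}$ and $\lambda_{\nu_0}=dt$. There $\int_{\partial}dt$ along a curve close to a negative boundary orbit is still negative, by the sign of its winding in $t$. Positivity of $\omega_{\nu_0}$ on $T$ is automatic, and $d\lambda_{\nu_0}=0$. This reduces to checking that the boundary curves of $T\cap M_{\nu_0}$ near a negative boundary orbit of $T$ wind negatively in the $t$-direction, which is exactly the defining sign convention.
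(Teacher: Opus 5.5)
Your argument is correct in substance, but it takes a different route from the paper's. The paper's proof is purely topological. It takes $a$ to be the boundary orbits of $S$ off $\sing(\phi)$ (all positive) and $b=\partial T$, and uses the symmetry of linking numbers in $M^\circ$, with push-offs along the degeneracy slope where $a$ and $b$ share components, to get $0<\langle S^\circ,b^*\rangle=\lk(a,b)=\langle a^*,T\rangle\le 0$.

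You instead reuse the stable Hamiltonian structure of Lemma~\ref{lem:patoshs}, correctly noting that the exactness item, and hence strict positivity, is not needed. You then run an action argument: $\int_{T'}d\lambda=\int_{T'}f\omega\ge 0$ on a positively transverse surface, while having only negative boundary orbits forces $\int_{\partial T'}\lambda<0$. The term $\int F'(r)\,\omega$ from the tubes around $\partial S\setminus L$ plays exactly the role of the paper's $\langle a^*,T\rangle$.

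Your fallback with the closed form $dt$ on $M_{\nu_0}$ is the paper's linking-number identity rewritten via Stokes: $\int_{\partial(T\cap M_{\nu_0})}dt=0$, split into the pieces near $\partial T$ and the pieces near $\partial S$; $\omega_{\nu_0}$ plays no role there. The paper's route is shorter and needs neither the blow-up nor any transport of $T$. Yours explains the obstruction as positivity of action and ties the statement to the machinery of Section~\ref{sec:patoreeb}.

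To make your version airtight, make three points explicit.

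First, in step~1, capping $T$ off on the elliptic core of the tube around an orbit $c\in\partial T\cap\partial S$ with a negative sign requires $T$'s boundary slope to stay on one side of $s(r)$ for all $r\in[0,r_0]$. This can be arranged: $S$ (positive at $c$) and $T$ (negative at $c$) have boundary slopes on opposite sides of the degeneracy slope $s_1$, so you can take the blow-up profile $s$ close to $s_1$ throughout.

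Second, in the fallback, $\partial(T\cap M_{\nu_0})$ also contains meridian circles wherever the interior of $T$ crosses an orbit of $\partial S$. Each contributes minus the multiplicity of $S$ along that orbit, so the sign is still right, but these circles must be counted.

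Third, the negative $t$-winding near $c\in\partial T\cap\partial S$ is not merely $T$'s sign convention. On $\partial\nu_0(c)$, the functional $\gamma\mapsto\int_\gamma dt$ is a positive multiple of the intersection number with $\partial S_{\nu_0}$. All boundary curves of $S$ and $T$ on that torus are transverse to the flow with the same coorientation. The sign comes out negative only because $\partial S_{\nu_0}$ represents a positive multiple of $[c]$ in $H_1(\nu_0(c))$ while $T$'s curves represent a nonpositive multiple. In other words, it uses that $c$ is a \emph{positive} boundary orbit of $S$, which is exactly where $T\cap\sing(\phi)=\emptyset$ and the $+$BAS hypothesis enter.
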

\begin{proof}
By assumption of $+$BAS, $\phi$ admits a Birkhoff section $S$ all of whose negative boundary components lie along $\sing(\phi)$. Suppose for the sake of contradiction that $\phi$ also admits a negative partial section $T$ that is disjoint from $\sing(\phi)$.

Let $M^\circ$ be the complement of a tubular neighborhood of $\sing(\phi)$. The surfaces $S$ and $T$ induce surfaces $S^\circ$ and $T^\circ$ in $M^\circ$. Note that $S^\circ$ possibly intersects $\partial M^\circ$ while $T^\circ$ is disjoint from $\partial M^\circ$.
Now recall the definition of linking number between multiorbits in $M^\circ$: Let $a$ be a multiorbit that is nullhomologous rel boundary, and let $b$ be a multiorbit that is nullhomologous. Then $a$ bounds a surface $A^\circ$
along with some curves on $\partial M^\circ$, and $b$ bounds a surface $B^\circ$. 
Assuming that $a$ and $b$ do not share components, then the linking number between $a$ and $b$ is 
$$\lk(a,b) = \langle A^\circ, b \rangle = \langle a, B^\circ \rangle$$
where $\langle \cdot, \cdot \rangle$ denotes the algebraic intersection number.
If $a$ and $b$ do share components, we take 
$$\lk(a,b) = \langle A^\circ, b^* \rangle = \langle a^*, B^\circ \rangle$$
where $a^*$ and $b^*$ are the push-offs of $a$ and $b$ along their degeneracy slope with suitable weights. See \cite[Section 5]{Mar25}.

Applying this definition with $a$ being the boundary components of $S$ lying away from $\sing(\phi)$, and $b$ being the boundary components of $T$, we have
$$0 < \langle S^\circ, b^* \rangle = \lk(a,b) = \langle a^*, T \rangle \leq 0.$$
Here the first inequality is strict because $S$ is a Birkhoff section and $b$ is non-empty. This gives us the desired contradiction.
\end{proof}

\begin{noneg}[Flows with both positive and negative lozenges disjoint from singular orbits] \label{noneg:positiveandnegativelozenges}
Suppose a pseudo-Anosov flow $\phi$ has a positive lozenge $L$ that is disjoint from $\sing(\phi)$. Then \cite[Theorem 3.3.1]{BM25} implies that $\phi$ has a positive lozenge $L'$ fixed by $g \in \pi_1(M)$, and such that $L$ and $L'$ share an ideal corner. The latter property ensures that $L'$ is disjoint from $\sing(\phi)$ as well.

Then by \cite[Theorem 5.2.18]{BM25}, there exists an \textbf{immersed Birkhoff annulus} $A$ whose trace is equal to $L'$. Here, an immersed Birkhoff annulus is a partial section that is homeomorphic to an annulus, but whose interior is not necessarily embedded. 
The trace of a Birkhoff annulus is the image of a lift under the projection $\widetilde{M} \to \mathcal{O}$.
Since $L'$ is positive, the boundary components of $A$ are positive.
Performing Fried resolution (see \cite{Fri83}) on $A$ gives a positive partial section $T$ to $\phi$.
Thus by \Cref{prop:+BASimpliesnonegativepartialsection}, $\phi$ cannot be $-$BAS.

Similarly, if $\phi$ has a negative lozenge that is disjoint from $\sing(\phi)$, then $\phi$ cannot be $+$BAS.
We conclude that if $\phi$ has both positive and negative lozenges that are disjoint from $\sing(\phi)$, then $\phi$ cannot be BAS.

Examples of pseudo-Anosov flows that satisfy this condition include all non-$\mathbb{R}$-covered Anosov flows. 
Here, recall that each Anosov flow is either trivial, skew, or non-$\mathbb{R}$-covered. 
See \cite{BI23} for methods of constructing non-$\mathbb{R}$-covered Anosov flows.
\end{noneg}

We conjecture that the converse of \Cref{noneg:positiveandnegativelozenges} is also true.

\begin{conj} \label{conj:lozenges}
Let $\phi$ be a transitive pseudo-Anosov flow on an oriented closed 3-manifold. If $\phi$ does not admit both positive and negative lozenges disjoint from $\sing(\phi)$, then $\phi$ is BAS.
\end{conj}

\begin{rmk}
    Examples of transitive pseudo-Anosov flows that do not admit both positive and negative lozenges disjoint from $\sing(\phi)$, and are not yet known to be BAS, can be constructed for instance in the following two ways:
    \begin{itemize}
        \item Following the construction developed in \cite[section 8]{BF13}, one may build \emph{totally periodic} pseudo-Anosov flows for which all lozenges have corners on $\sing(\phi)$. To do that, it suffices to build the examples from admissible fat graphs all of whose vertices have valence $>4$. Note that this family of examples do admit both positive and negative lozenges. See \cite{BF15} or \cite[Section 5.5]{BM25} for more details.
        \item Taking branched covers of geodesic flows (or more generally contact Anosov flows) as in \cite[Remark 5.10]{BF21} or \cite[Example 2.12]{ChaPan25} yields a different family of examples. Up to changing the orientation of the manifold, all lozenges of flows in this family are positive.
    \end{itemize}

Note that the finiteness conjecture is already known for both of these families, by \cite{BF15} for the totally periodic case, and by \cite{ChaPan25} for the branched covers of contact Anosov flows.
\end{rmk}

It would also be interesting to have a better understanding on the relationship between pseudo-Anosov flows with positive sections, pseudo-Anosov Reeb flows in the sense of \cite{ChaPan25} and BAS pseudo-Anosov flows. There exist flows admitting both a positive Birkhoff section and a transverse surface; such a flow is BAS, but not pseudo-Anosov Reeb.
Finally, there exist examples of BAS pseudo-Anosov flows that are neither pseudo-Anosov Reeb nor admit a strictly positive Birkhoff section. For instance, one can construct many pseudo-Anosov flows without perfect fits that admit a transverse surface (e.g., a suspension of a pseudo-Anosov diffeomorphism), and therefore cannot be pseudo-Anosov Reeb (\cite[Lemma 2.13]{ChaPan25}). Similarly suspensions of pseudo-Anosov diffeomorphisms cannot have a strictly positive Birkhoff section.

\subsection{Other rigid orbits}

Morally, \Cref{thm:finitenessBAS} builds on \cite{BBowM24} by using the fact that singular orbits of a pseudo-Anosov flow are \textbf{rigid}, in the sense that on a given 3-manifold, there are at most finitely many possibilities for the isotopy class of the singular orbits and their degeneracy curves. A natural question is then: Are there other rigid types of orbits for a pseudo-Anosov flow?
More specifically, we make the following conjecture.

\begin{conj} \label{conj:branchingorbitrigid}
A \textbf{branching orbit} of a pseudo-Anosov flow $\phi$ is a closed orbit that lies on a leaf of $\Lambda^s$ or $\Lambda^u$ that is part of a cataclysm in the leaf space.
The set of branching orbits of a pseudo-Anosov flow is rigid, i.e. on a given 3-manifold, there are at most finitely many possibilities for the isotopy class of the branching orbits and their degeneracy curves.
\end{conj}

If \Cref{conj:branchingorbitrigid} is true, then Theorem \ref{thm:relposfinite} would give finiteness of pseudo-Anosov flows that admit a Birkhoff section all of whose negative boundary components lie along branching and singular orbits.

\subsection{Effective bounds}

It might be interesting to have effective bounds on the number of (orbit equivalence classes of) pseudo-Anosov flows on a given 3-manifold $M$.
As mentioned in the introduction, our finiteness result comes from finiteness of tight contact structures \cite{CGH09} and finiteness of isotopy classes of singular orbits \cite{Li26}.
Since the proof of both results involve some sort of `normal form' with respect to some triangulation $\Delta$ on $M$, one might expect to get a bound in terms on the number of tetrahedra in $\Delta$.
More specifically, we ask:

\begin{quest}
Can one give an explicit exponential bound on the number of (orbit equivalence classes of) pseudo-Anosov flows without perfect fit on a closed 3-manifold $M$ in terms of the number of tetrahedra in a triangulation of $M$?
\end{quest}

\bibliographystyle{alphaurl}

\bibliography{bib.bib}

\end{document}